\documentclass[11pt]{article}

\usepackage{amsmath,amssymb,amsthm}
\usepackage[unicode,breaklinks=true,colorlinks=true]{hyperref}
\usepackage[dvipsnames]{xcolor}
\usepackage[normalem]{ulem} %
\usepackage{soul}
\usepackage{marginnote}

\usepackage[top=1in, bottom=1in, left=1.25in, right=1.25in, marginparwidth=1in, marginparsep=0.1in]{geometry}

\numberwithin{equation}{section}
\newtheorem{theorem}{Theorem}[section]

\newtheorem{lemma}[theorem]{Lemma}
\newtheorem{definition}[theorem]{Definition}

\theoremstyle{remark}
\newtheorem{remark}[theorem]{Remark}

\definecolor{darkblue}{rgb}{0,0,0.7}
\definecolor{darkred}{rgb}{0.6,0,0}

\newcommand{\norm}[1]{\| #1 \|}

\newcommand{\al}{\alpha}

\newcommand{\de}{\delta}
\newcommand{\e}{\epsilon}
\newcommand{\ga}{{\gamma}}

\newcommand{\la}{\lambda}

\newcommand{\om}{{\omega}}
\newcommand{\si}{\sigma}
\newcommand{\td}{\tilde}

\newcommand{\De}{\Delta}
\renewcommand{\th}{\theta}

\newcommand{\R}{{\mathbb R }}\newcommand{\RR}{{\mathbb R }}
\newcommand{\N}{{\mathbb N}}

\newcommand{\pd}{{\partial}}
\newcommand{\nb}{{\nabla}}
\newcommand{\lec}{\lesssim}

\newcommand{\I}{\infty}
 
\renewcommand{\div}{\mathop{\mathrm{div}}}

\newcommand{\supp}{\mathop{\mathrm{supp}}}

\newcommand{\donothing}[1]{{}}

\newcommand{\EQ}[1]{\begin{equation}\begin{split} #1 \end{split}\end{equation}}
\newcommand{\EQN}[1]{\begin{equation*}\begin{split} #1 \end{split}\end{equation*}}

\DeclareMathOperator*{\esssup}{ess\,sup}

\makeatletter
\newcommand{\xRightarrow}[2][]{\ext@arrow 0359\Rightarrowfill@{#1}{#2}}
\makeatother

\newcommand{\loc}{\mathrm{loc}} 
\newcommand{\far}{\mathrm{far}} 
\newcommand{\near}{\mathrm{near}} 
\newcommand{\uloc}{\mathrm{uloc}}

\newcommand{\cD}{\mathcal{D}}

\begin{document}
\title{On the local pressure expansion for the Navier-Stokes equations} 
\author{Zachary Bradshaw and Tai-Peng Tsai}
\date{\today}
\maketitle 

\begin{abstract}  
We show that the pressure associated with a distributional solution of the Navier-Stokes equations on the whole space satisfies a local expansion defined as a distribution if and only if the solution is mild. This gives a new perspective on Lemari\'e-Rieusset's ``equivalence theorem.'' Here, the Leray projection operator composed with a gradient is defined without using the Littlewood-Paley decomposition. Prior sufficient conditions for the local expansion assumed spatial decay or estimates on the gradient and imply the considered solution is mild. An important tool is an explicit description of the bounded mean oscillation solution to a Poisson equation, which we examine in detail. As applications we include an improvement of a uniqueness criteria by the authors in Morrey spaces and revisit a proof of a regularity criteria in dynamically restricted local Morrey spaces due to Gruji\'c and Xu.
\end{abstract}

\section{Introduction}\label{sec.intro}

The Navier-Stokes equations describe the evolution of a viscous incompressible fluid's velocity field $u$ and associated scalar pressure $p$.  In particular, $u$ and $p$ are required to satisfy
\EQ{\label{eq.NSE}
&\partial_tu-\Delta u +u\cdot\nabla u+\nabla p = 0,
\\& \nabla \cdot u=0,
}
in the sense of distributions.  For our purpose, \eqref{eq.NSE} is applied on $\R^3\times (0,\I)$ and $u$ evolves from a prescribed, divergence free initial data $u_0:\R^3\to \R^3$.

In the classical paper \cite{leray}, J.~Leray constructed  global-in-time weak solutions to \eqref{eq.NSE} on $\R^4_+=\R^3\times (0,\infty)$ for any divergence free vector field $u_0\in L^2(\R^3)$.  Leray's solution $u$ satisfies the following properties:
\begin{enumerate}
\item $u\in L^\I(0,\I;L^2(\R^3))\cap L^2(0,\I;\dot H^1(\R^3))$,
\item $u$ satisfies the weak form of \eqref{eq.NSE},
\[
\iint -u \pd_t \zeta + \nb u:\nb \zeta + (u \cdot \nb )u \cdot \zeta = 0,\quad \forall \zeta \in C^\infty_c(\R^4_+;\R^3), \quad \div \zeta=0,
\]
\item $u(t)\to u_0$ in $L^2(\R^3)$ as $t\to 0^+$,
\item $u$ satisfies the \emph{global energy inequality}: For all $t>0$,
\[
\int_{\R^3} |u(x,t)|^2\,dx +2 \int_0^t \int_{\R^3} |\nabla u(x,t)|^2\,dx\,ds \leq \int_{\R^3} |u_0(x)|^2\,dx.
\]
\end{enumerate}
 The above existence result was extended to domains by Hopf in \cite{Hopf}.

In his book \cite{LR}, Lemari\'e-Rieusset introduced a local analogue of Leray-Hopf weak solutions called \emph{local Leray solutions}.  These solutions evolve from uniformly locally square integrable data  $u_0\in L^2_{\uloc}$. Here, for $1\le q \le \infty$, $L^q_{\uloc}$ is the space of functions on $\R^3$ with finite norm
\[
\norm{u_0}_{L^q_{\uloc}} :=\sup_{x \in\R^3} \norm{u_0}_{L^q(B(x,1))}<\infty.
\]
 We can also define parabolic versions of the uniform local $L^p$ spaces: $u\in L^p_\uloc(\R^3\times (0,T))$ if and only if
\[
\|u\|_{L^p_\uloc(\R^3\times (0,T))}^p:=\sup_{x_0\in \R^3} \int_0^T\int_{B_1(x_0)} |u|^p \,dx \,dt<\I.
\]
Because we work elusively with $\R^3$, we adopt the shorthand $L^p_\uloc(0,T)$ for $ L^p_\uloc(\R^3\times (0,T))$.   Note also that $f\in L^p_\loc(\R^3\times (0,T))$ means $f \in L^p(B_R\times (\e,T-\e))$ for all $R>0$ and $0<\e\ll1$ while $f\in L^p_\loc(\R^3\times [0,T])$ means  $f \in L^p(B_R\times (0,T))$ for all $R>0$.

Having a notion of weak solution in a broader class than Leray's is useful when analyzing initial data in critical spaces such as the Lebesgue space $L^3$, the Lorentz space $L^{3,\I}$, or the Morrey space $M^{2,1}$, all of which embed in $L^2_{\uloc}$ but not in $L^2$ (see \cite{JiaSverak} for an example where this was crucial).  By critical spaces we mean spaces for which the norm of $u$ is scaling invariant.  It is in such spaces that many arguments break down.  For example, $L^\I(0,T;L^3)$ is a regularity class for Leray-Hopf solutions \cite{ESS}, but this in unknown for $L^\I(0,T;L^{3,\infty})$. 

The following definition is motivated by those found in \cite{LR,KiSe,JiaSverak-minimal,JiaSverak}.   Note that, at this point, we do not include an assumption on the pressure nor on the decay of the solution.

\begin{definition}[Local Leray solutions]\label{def:localLeray} Fix $0<T<\I$. A vector field $u\in L^2_{\loc}(\R^3\times [0,T] )$ is a local Leray solution to \eqref{eq.NSE} with divergence free initial data $u_0\in L^2_{\uloc}(\R^3)$ (denoted $u\in \mathcal N(u_0)$) if:
\begin{enumerate}
\item for some $p\in L^{3/2}_{\loc}(\R^3\times [0,T])$, the pair $(u,p)$ is a distributional solution to \eqref{eq.NSE},
\item for any $R>0$, $v$ satisfies
\begin{equation}\notag
\esssup_{0\leq t<R^2 \wedge T}\,\sup_{x_0\in \R^3}\, \int_{B_R(x_0 )}\frac 1 2 |u(x,t)|^2\,dx + \sup_{x_0\in \R^3}\int_0^{R^2\wedge T}\int_{B_R(x_0)} |\nabla u(x,t)|^2\,dx \,dt<\infty,\end{equation}
\item for all compact subsets $K$ of $\R^3$ we have $u(t)\to u_0$ in $L^2(K)$ as $t\to 0^+$,
\item $u$ is suitable in the sense of \cite{CKN}, i.e., for all cylinders $Q$ compactly supported in  $ \R^3\times(0,T )$ and all non-negative $\phi\in C_c^\I (Q)$, we have  the \emph{local energy inequality}
\EQN{ 
&%
2\iint |\nabla u|^2\phi\,dx\,dt 
\\&\leq %
\iint |u|^2(\partial_t \phi + \Delta\phi )\,dx\,dt +\iint (|u|^2+2p)(u\cdot \nabla\phi)\,dx\,dt.
}
\item the function $t\mapsto \int u(x,t)\cdot w(x)\,dx$ is continuous on $[0,T)$ for any compactly supported $w\in L^2(\R^3)$.
\end{enumerate}
\end{definition}

In application, it is useful to have the following a priori bound originally due to Lemari\'e-Rieusset \cite{LR} and extended to all scales in \cite{JiaSverak-minimal}:~For all $u\in \mathcal N (u_0)$ and $r>0$ we have
\begin{equation}\label{ineq.apriorilocal}
\esssup_{0\leq t \leq \sigma r^2}\sup_{x_0\in \RR^3} \int_{B_r(x_0)}\frac {|u|^2} 2 \,dx  + \sup_{x_0\in \RR^3}\int_0^{\sigma r^2}\int_{B_r(x_0)} |\nabla u|^2\,dx\,dt <CA_0(r) ,
\end{equation}
\[
A_0(r)=rN^0_r= \sup_{x_0\in \R^3} \int_{B_r(x_0)} |u_0|^2 \,dx,
\] 
and
\begin{equation*} 
\si=\sigma(r) =c_0\, \min\big\{(N^0_r)^{-2} , 1  \big\},
\end{equation*}
for a small universal constant $c_0>0$. This bound has been used to study existence, regularity and uniqueness problems for the Navier-Stokes equations in \cite{JiaSverak,JiaSverak-minimal,LR,LR2,BT1,BT5,BT8} and other papers.

To prove this a priori bound, the pressure on the right hand side of the local energy inequality must be expressed in terms of $u$.  This is a delicate matter when $u$ is not decaying.  In \cite{JiaSverak-minimal}, this a priori bound is established provided $u$ and $p$ satisfy the following:
For any $R>0$, $x_0\in \R^3$, and $T>0$, there exists a function of time $c_{x_0,R}(t)\in L^{3/2}(0,T)$ so that, for every $0<t<T$  and $x \in B_{2R}(x_0)$  
\EQ{ \label{pressure.dec}
&p(x,t)=T_1 [(u\otimes u )(\cdot,t)\chi_{4R} (\cdot-x_0)] (x)
\\&\quad 
+ T_2 [(u\otimes u)(\cdot,t)(1-\chi_{4R}(\cdot-x_0))](x)
+ c_{x_0,R}(t),
}
where  $T_1$ is a Calderon-Zygmund operator, $T_2$ is an integral operator the kernel of which  has stronger decay at spatial infinity than $T_1$ (by an order of 1; this will be made clear in \eqref{eq.pressureexpansion}),  and $\chi_{4R} (x)$ is the characteristic function for $B_{4R}$.

The existence of parasitic solutions (see \cite{GIM, Kukavica} and comments following Theorem \ref{thrm.pressureImpliesMild}) implies this pressure formula cannot hold for every non-decaying solution.  This raises the problem of finding optimal sufficient conditions for \eqref{pressure.dec}.   Jia and \v Sver\'ak state the following sufficient condition in \cite{JiaSverak-minimal}: If $u\in \mathcal N(u_0)$ and
\EQ{\label{condition.JiaSverak}
\lim_{|x_0|\to\I} \int_0^{R^2}\int_{B_R(x_0)} |u|^2\,dx\,dt=0,\quad \forall R>0,
}
then $p$ satisfies \eqref{pressure.dec}.  That is, \eqref{pressure.dec} follows from a mild assumption on the spatial decay of the solution.  A proof of this fact is given in \cite{KMT} based on ideas in \cite{MaMiPr}. In turn, \eqref{condition.JiaSverak} can be derived from $u_0 \in E^2$, see \cite{KiSe,KMT}.  
In \cite{LR2}, Lemari\'e-Rieusset references this pressure expansion in the context of Oseen solutions. These are solutions where $u\cdot \nb u +\nb p$ is replaced by $\mathbb P \nb \cdot(u\otimes u)$  and $\mathbb P$ is defined using Littlewood-Paley. We discuss this further in Comment 2 following the statements of our main theorems.

This paper is concerned with establishing necessary and sufficient conditions for the pressure formula \eqref{pressure.dec}, or a weaker version of it, that do not assume decay at spatial infinity or require knowledge of spatial gradients.  We will do this for a larger class than local Leray solution, but emphasize local Leray solutions because they have proven useful in a number of recent papers.  We introduce some notation before formulating our main result.

Let  
\EQ{\label{Gij.def}
G_{ij}f=R_iR_jf= - \frac 13 \de_{ij} f(x) + p.v. \int K_{ij}(x-y) f(y)dy,
}
with $R_j=(-\De)^{-\frac12}\pd_j$ the $j$-th Riesz transform and
\[  K_{ij}(y) =
\pd_i \pd_j \frac1{4\pi |y|} = \frac {-\de_{ij}|y|^2 + 3 y_i y_j}{4\pi |y|^5}.
\]
The principle-value integral in \eqref{Gij.def} converges in $L^q(\R^n)$ (see \cite[page 35]{Stein2}) if $f \in L^q(\R^n)$, $q<\I$.  If $q=\I$, then \eqref{Gij.def} is understood as an element of $BMO$ via duality with $\mathcal H^1$. We elaborate on this in Section \ref{sec.pressureformula}.

Fix a function $\Theta\in C_c^\I$ so that $\Theta = 1$ on $B_2(0)$ and $\supp\Theta \subset B_4(0)$. Let $\th_R(x) = \Theta( x/ R)$ so that $\th_R =1$ on $B_{2R}(0)$ and $\supp \th_R \subset B_{4R}(0)$.
 Also let 
\[
K_{ij}^{2R}(x) = K_{ij}(x)(1-\theta_R (x)),
\]
and
\EQ{\label{GijB.def}
G_{ij}^B f(x) =   - \frac 13 \de_{ij} f(x) +\lim_{\e\to 0} \int_{ {|x-y|>\e}} (K_{ij}(x-y)-K_{ij}^{2R} ( {x_0}-y)  )  f(y)\,dy.
}
Note that, for $x\in B$, 
\EQ{ 
G_{ij}^B(f_{ij})(x)&= -\frac 1 3 \de_{ij}f_{ij} (x)+ p.v. \int (K_{ij}(x-y)-K_{ij}^{2R}(x_0-y))f_{ij}(y)\,dy 
\\&=  -\frac 1 3 \de_{ij}f_{ij} +p.v. \int K_{ij}(x-y) f_{ij}(y) \th(x_0-y)\,dy  
\\&\quad+ \int_{|x_0-y|\geq 2R} (K_{ij}(x-y)-K_{ij}^{2R}(x_0-y))f_{ij}(y)(1-\th(x_0-y)) \,dy  
\\&= [- \Delta^{-1}\div\div ]_{ij}(f_{ij}   \th_R(x_0-\cdot) ) (x)  
\\&\quad+\int  (K_{ij}(x-y)-K_{ij}(x_0-y))  (1- \th_R(x_0-y))f_{ij}(y) \,dy , 
\label{eq.pressureexpansion}
}
which matches the structure of \eqref{pressure.dec} modulo a constant.   Unlike \eqref{Gij.def}, the principle-value integral in \eqref{GijB.def} converges in $L^q(\R^n)$ if $f_{ij} \in L^\I(\R^n)$ because the argument is localized. The remaining part converges pointwise due to the extra decay of the kernel in the far-field.

\begin{definition}\label{def.localPressure}
Assume $u\in L^2_\uloc(0,T)\cap L^q_\loc(\R^3\times (0,T) )$  for some $T>0$ and $q>2$, is a distributional solution to \eqref{eq.NSE} and $p$ is the associated pressure.  We say that $p$ satisfies the {\bf local pressure expansion} (abbreviated {\bf LPE}) \eqref{pressure-exp}
if, for every $R>0$, $t\in (0,T)$ and $x_0\in \R^3$, there exists $c_{x_0,R}(t)\in L^1(0,T)$ so that %
\EQ{\label{pressure-exp}
p = G_{ij}^{B_R(x_0)} (u_iu_j  ) +c_{x_0,R}(\cdot),
}
in $L^{q/2}(B_R(x_0)\times (0,T))$.
If $u\in \mathcal N(u_0)$ for some $u_0\in L^2_\uloc$ and the associated pressure $p$ satisfies the LPE, 
then we say $u$ is a  local energy solution. 
\end{definition}

It is worth explicitly noting that the a prior {bound \eqref{ineq.apriorilocal} is} valid for local energy solutions (see \cite{BT8}).  The assumption that $u\in L^q_\loc(\R^3\times (0,T) )$  is needed to make sense of $[- \Delta^{-1}\div\div ]_{ij}( u_i u_j   \th(x_0-\cdot) ) (x) $.  Without this assumption, the most we know about $u_i u_j   \th(x_0-\cdot)$ is that it is in $L^1$ at almost every time. Since the Riesz transforms are not bounded on $L^1$, it is not obvious that $[- \Delta^{-1}\div\div ]_{ij}( u_i u_j   \th(x_0-\cdot) ) (x) $ is defined at almost every time.  To get around this, we will use a weaker notion of the local pressure expansion.

\begin{definition}\label{def.localPressure2}
Let
$u\in L^2_\uloc(0,T) $ and $p$ be a distribution on  $\R^3 \times (0,T)$ for some $T>0$ be a distributional solution to \eqref{eq.NSE}. 
We say that $p$ satisfies the {\bf distributional local pressure expansion} (abbreviated {\bf DLPE}) if, for every $x_0\in \R^3$ and $R>0$, there exists $c_{x_0,R}(t) \in L^1(0,T)$ so that for all 
$\psi\in \mathcal D(B_R(x_0)\times (0,T))$,
\EQ{\label{eq.map}
&\langle p - c_{x_0,R}(t) ,\psi \rangle = \langle  p_\near,\psi\rangle +\langle     p_\far,\psi \rangle
\\&:= \int_0^T \int u_i (x,t)u_j(x,t) \th_R(x_0-x) R_iR_j \psi (x,t)\,dx\,dt
\\&+ \int_0^T\int \int (K_{ij}(x-y) - K_{ij}(x_0-y)) (1-\th_R (x_0-y)) (u_iu_j )(y,t)\,dy \, \psi(x,t) \,dx\,dt.
}
\end{definition}

We emphasize that in Definition \ref{def.localPressure}, the pressure is a function whereas in Definition \ref{def.localPressure2}, the pressure is only defined as a distribution.

We are often interested in the distribution $\nb p = \nb p_\near +\nb p_\far$ which is defined by the mapping
\begin{align}
\psi \mapsto &\int_0^T \int u_i (x,t)u_j(x,t) \th_R(x_0-x) R_iR_j ( \nb \cdot \psi) (x,t)\,dx\,dt \nonumber
\\&+ \int_0^T\int \int (K_{ij}(x-y) - K_{ij}(x_0-y)) (1-\th_R (x_0-y)) (u_iu_j )(y,t) \,dy \nb \cdot \psi (x,t)\,dx\,dt \nonumber
\\&=: \langle \nb   p_\near,\psi\rangle +\langle \nb   p_\far,\psi \rangle,\nonumber
\end{align}
where $\th_R$ and $x_0$ are as above and $\psi \in  \mathcal D(\R^3\times (0,T))^3$.  The difference here compared to \eqref{eq.map} is that the constant has disappeared because $\nb \cdot \psi$ has mean zero.

We now turn our attention to better understanding when solutions satisfy the local pressure expansions.   Note that  $\mathbb P = (P_{ij})_{1\leq i,j\leq 3}$ where $P_{ij}=\delta_{ij}+R_iR_j$.

\begin{theorem}\label{thrm.pressureNSE}
Let $u_0\in L^1_\loc(\R^3; \R^3)$ be divergence free and satisfy $e^{t\Delta}u_0\in L^1_\uloc(0,T)$.
 Let $u:\R^3\times (0,T)\to \R^3$ be given so that
$u \in L^2_\uloc(0,T)$ 
and
\EQ{\label{condition.mild}
u(x,t)=e^{t\Delta}u_0(x) -\int_0^t e^{(t-s)\Delta} \mathbb P(\nabla \cdot (u\otimes u))(s)\,ds%
}
in $L^1_\uloc(0,T)$.
Then, there exists $\bar p\in \mathcal D'(\R^3\times (0,T))$ so that $u$ and $ \bar p$ solve the Navier-Stokes equations distributionally and $ \bar p$ satisfies the DLPE. 
Additionally:
\begin{itemize}
\item if $u\in  L^q_\loc(\R^3\times [0,T) )$ for some $q>2$, then  $\bar p\in L^{q/2}_\loc(\R^3\times [0,T))$ and satisfies the LPE;
\item  if $u_0\in L^2_\uloc$ and 
$u\in \mathcal N(u_0)$, then $u$ is a local energy solution. That is, any mild local Leray solution is a local energy solution.
\end{itemize} 
\end{theorem}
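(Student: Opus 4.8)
The plan is to construct the candidate pressure directly from the mild formulation and then verify the three conclusions in turn. First I would define, for a mollified/localized version of $u\otimes u$, the distribution
\[
\bar p := G_{ij}^{B_R(x_0)}(u_iu_j)
\]
locally on each $B_R(x_0)\times(0,T)$ and check consistency: if two balls $B_{R}(x_0)$ and $B_{R'}(x_0')$ overlap, the two formulas for $\bar p$ differ by a function of $t$ alone. This is exactly the content of the rewriting in \eqref{eq.pressureexpansion}, since on the overlap region $G_{ij}^B(u_iu_j)$ equals $[-\Delta^{-1}\div\div]_{ij}(u_iu_j\,\th_R(x_0-\cdot))$ plus a far-field piece whose $x$-dependence, after subtracting its value at any fixed point, is independent of which localization we chose; the difference between the two local representatives is then a spatially-constant (in $x$) distribution in $t$, which one absorbs into the $c_{x_0,R}(t)$ term. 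Membership $c_{x_0,R}\in L^1(0,T)$ follows from $u\in L^2_\uloc(0,T)$ together with the smoothing of the far-field kernel (difference of $K_{ij}$ kernels gains a factor $|x-x_0|/|x_0-y|$, hence is integrable against $L^1_t L^1_{x,\mathrm{loc}}$ data in the far field). This gives a well-defined $\bar p\in\mathcal D'(\R^3\times(0,T))$ satisfying the DLPE by construction (the pairing in \eqref{eq.map} is precisely this local definition tested against $\psi$, after moving the Riesz transforms onto $\psi$, which is legitimate because $\psi$ is smooth and compactly supported).

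The second step is to show $(u,\bar p)$ solves \eqref{eq.NSE} distributionally. Here I would apply $\div$ to the mild formula \eqref{condition.mild}: since $u_0$ and the Duhamel term are divergence free, $\div u=0$. For the momentum equation, test against $\psi\in\mathcal D(\R^3\times(0,T))^3$ (not necessarily divergence free) and use the semigroup identity: $\partial_t u-\Delta u = -\mathbb P\,\div(u\otimes u)$ holds in $\mathcal D'$ directly from \eqref{condition.mild}, so $\partial_t u - \Delta u + \div(u\otimes u) = (\mathrm{Id}-\mathbb P)\div(u\otimes u) = \nb\big((-\Delta)^{-1}\div\div(u\otimes u)\big) = -\nb\bar p$, where the last equality is the identity $(\mathrm{Id}-\mathbb P)_{ij} = -\delta_{ij} - R_iR_j$ applied termwise, matched against the definition of $\nb\bar p = \nb p_{\near}+\nb p_{\far}$ given just before this theorem. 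The point of the localized $G_{ij}^B$ construction is precisely that it represents the same distribution $(\mathrm{Id}-\mathbb P)\div(u\otimes u)$ but is well defined without Littlewood-Paley and without decay; I would verify this matching against test functions supported in a single ball and then invoke the consistency from step one.

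For the first bullet, assume additionally $u\in L^q_\loc(\R^3\times[0,T))$ with $q>2$. Then $u_iu_j\in L^{q/2}_\loc$, the localized piece $[-\Delta^{-1}\div\div]_{ij}(u_iu_j\th_R(x_0-\cdot))$ is in $L^{q/2}$ in space by Calderón–Zygmund boundedness (finite since $q/2>1$), and the far-field piece is bounded on $B_R(x_0)$, uniformly controlled by $\norm{u}_{L^2_\uloc}^2$; integrating in time gives $\bar p\in L^{q/2}_\loc(\R^3\times[0,T))$ and the representation \eqref{pressure-exp} holds in $L^{q/2}(B_R(x_0)\times(0,T))$, i.e.\ the LPE. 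For the second bullet, if $u_0\in L^2_\uloc$ and $u\in\mathcal N(u_0)$, then by suitability (part (4) of Definition \ref{def:localLeray}) $u$ has the parabolic regularity $u\in L^{10/3}_\loc$, which is $>2$, so the first bullet applies with $q=10/3$; hence the pressure satisfies the LPE and $u$ is a local energy solution by Definition \ref{def.localPressure}.

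The main obstacle I anticipate is step one's consistency/constant-absorption argument: proving rigorously that the two local representatives $G_{ij}^{B_R(x_0)}$ and $G_{ij}^{B_{R'}(x_0')}$ differ by something depending only on $t$ as \emph{distributions} (not just formally), and that this difference lies in $L^1(0,T)$. This requires care with the convergence of the principal-value integrals and with the fact that, a priori, $u_iu_j(\cdot,t)$ is only $L^1_{\mathrm{loc}}$ in $x$ uniformly locally, so the interior Riesz-transform term is a genuine distribution rather than a function in the DLPE setting. The resolution is to split $u\otimes u$ into its restriction to a large ball (where one uses density of smooth functions and $L^{q/2}$ or at worst distributional CZ theory) and the far-field remainder (where the gained kernel decay makes everything an absolutely convergent integral), and to track that all ball-dependent choices enter only through $x$-independent additive terms.
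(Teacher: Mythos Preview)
Your outline has the right shape, but there is a genuine gap in step two, and it is exactly the step where the paper's machinery is needed. You write that ``$\partial_t u-\Delta u = -\mathbb P\,\div(u\otimes u)$ holds in $\mathcal D'$ directly from \eqref{condition.mild}''. This is not direct: passing from the Duhamel formula to the distributional PDE requires justifying $(\partial_t-\Delta)\int_0^t e^{(t-s)\Delta}\mathbb P\nabla\cdot F\,ds = \mathbb P\nabla\cdot F$ in $\mathcal D'$ when $F=u\otimes u$ is only $L^1_\uloc(0,T)$, and the identification $(\mathrm{Id}-\mathbb P)\nabla\cdot F = -\nabla\bar p$ must be made rigorous at the same regularity. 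The paper does not attempt this directly. Instead it mollifies $F$ to a smooth bounded $F^\e$, invokes the $L^\infty$/$BMO$ theory (Theorem~\ref{thrm.pressure}) to get a genuine function $p^\e\in L^\infty(0,T;BMO)$ satisfying both the LPE and the Poisson equation, then uses \cite[Lemma 3.1]{Kukavica} --- which requires bounded data --- to pass from the mild formula for $u^\e$ to the distributional heat equation \eqref{eq.epsilon.stokes}. The limit $\e\to 0$ is then handled by two dedicated convergence lemmas (Lemmas~\ref{lemma.convergenceToIntegralForm} and \ref{lemma.pressureConvergence}). Your ``split into large ball plus far-field'' idea is morally right for the consistency of $\bar p$, and the paper does exactly this (Lemma~\ref{lemma.pressure.distribution2}), but it does not suffice for the mild-to-distributional step.

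There is a second gap in your treatment of the last bullet. Knowing $\bar p$ satisfies the LPE is not enough to conclude $u$ is a local energy solution: you must check that the \emph{local energy inequality} in Definition~\ref{def:localLeray}(4) holds with $\bar p$, whereas the hypothesis $u\in\mathcal N(u_0)$ only gives it with some other pressure $p$. Since $\nabla p=\nabla\bar p$ in $\mathcal D'$, the two pressures differ by a function of $t$ only, but $u\cdot\nabla\phi$ is not a test function, so this does not immediately transfer. The paper closes this by approximating $u\cdot\nabla\phi$ in $L^3$ by $\nabla\cdot\Phi_n$ with $\Phi_n\in C_c^\infty$, and using $p,\bar p\in L^{3/2}_\loc$ (from the first bullet with $q=10/3$, which comes from the energy bounds in part (2), not from suitability as you wrote) to pass to the limit.
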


Note that we state a precise definition (from \cite{MaTe}, see also \cite{LR}) of the operator $e^{t\Delta}\mathbb P\nb\cdot$ in Section \ref{sec.mild.formulation} to ensure the integral formula above is meaningful.   

We will refer to any solution satisfying \eqref{condition.mild} as a {\bf mild solution}. On the other hand, if $u$ is a solution and there exists a  pressure $p$ so that $p$ satisfies the DLPE, we will refer to $u$ as a {\bf DLPE solution}. 
The  main assertion of Theorem \ref{thrm.pressureNSE} is that any mild solution in $L^2_\uloc (0,T)$  is a DLPE solution. Furthermore, any mild local Leray solution is a local energy solution.

The next theorem is roughly a converse to Theorem \ref{thrm.pressureNSE}.

\begin{theorem}\label{thrm.pressureImpliesMild}
Let $u_0:\R^3\to \R^3$ and $u:\R^3\times (0,T]\to \R^3$ be given divergence free vector fields so that $u_0\in L^2_\loc$, $e^{t\Delta}u_0 \in L^1_\uloc(0,T)$,  $u  \in L^2_\uloc(0,T)$
and, for every compact set $K$,
\EQN{ 
\| u(t)-u_0\|_{L^2(K)}\to 0,
}
as $t\to 0^+$.
Assume that $u$ and $p$ solve the Navier-Stokes as distributions and $p$ satisfies the DLPE. Then, $u$ satisfies the integral formula \EQ{ \notag
u(x,t)=e^{t\Delta}u_0(x) -\int_0^t e^{(t-s)\Delta} \mathbb P(\nabla \cdot (u\otimes u))(s)\,ds,
}
where equality is understood in $L^1_\loc(\R^3\times (0,T))$. It follows that every local energy solution is mild.
\end{theorem}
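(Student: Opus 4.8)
The plan is to start from the distributional Navier–Stokes equation $\partial_t u - \Delta u + \nabla\cdot(u\otimes u) + \nabla p = 0$ and use the DLPE to rewrite $\nabla p$ explicitly in terms of $u\otimes u$. Fix a ball $B_R(x_0)$ and a time interval $(0,T')$ with $T'<T$. On such a cylinder the DLPE gives $\nabla p = \nabla p_\near + \nabla p_\far$ with $\nabla p_\near$ the (distributional) gradient of $[-\Delta^{-1}\div\div](u_iu_j\,\th_R(x_0-\cdot))$ and $\nabla p_\far$ a genuine function (the far kernel difference is integrable against $L^2_\uloc$ data and its gradient decays like $|x_0-y|^{-3}$ locally in $x$). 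The point is that $\nabla\cdot(u\otimes u) + \nabla p_\near$ is, on $B_R(x_0)$, exactly $\mathbb P \nabla\cdot(u\otimes u\,\th_R(x_0-\cdot))$ plus a harmless error coming from the cutoff $\th_R$; combining with $\nabla p_\far$ we obtain, in $\mathcal D'(B_R(x_0)\times(0,T'))$,
\[
\partial_t u - \Delta u = -\mathbb P\nabla\cdot(u\otimes u) + E_{x_0,R},
\]
where $E_{x_0,R}$ is a smooth-in-space error supported where $\nabla \th_R(x_0-\cdot)\neq 0$, i.e.\ away from $B_{2R}(x_0)$, together with the far-field term; by choosing $R$ large we push this error out of any fixed compact set, so it vanishes in the limit $R\to\infty$. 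This is the algebraic heart of the argument and rests on the computation in \eqref{eq.pressureexpansion} read in reverse.

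Next I would promote this distributional identity to the Duhamel formula. Since $u\in L^2_\uloc(0,T)$, we have $u\otimes u\in L^1_\uloc(0,T)$, and the heat semigroup $e^{t\Delta}$ together with the operator $e^{t\Delta}\mathbb P\nabla\cdot$ (defined as in Section \ref{sec.mild.formulation}) acts on such data producing elements of $L^1_\uloc$; in particular $w(t) := \int_0^t e^{(t-s)\Delta}\mathbb P\nabla\cdot(u\otimes u)(s)\,ds$ is well-defined in $L^1_\uloc(0,T)$ and solves $\partial_t w - \Delta w = -\mathbb P\nabla\cdot(u\otimes u)$ distributionally with $w(t)\to 0$ in $L^1_\loc$ as $t\to 0^+$. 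Then $v := u - e^{t\Delta}u_0 + w$ satisfies the homogeneous heat equation in $\mathcal D'(\R^3\times(0,T))$ (the errors $E_{x_0,R}$ having been removed by the localization-then-$R\to\infty$ step) and, by the hypothesis $\|u(t)-u_0\|_{L^2(K)}\to 0$ for every compact $K$ together with $e^{t\Delta}u_0\to u_0$ and $w(t)\to 0$ locally, $v(t)\to 0$ in $L^1_\loc$ as $t\to 0^+$. A uniqueness statement for the heat equation with $L^1_\uloc$-type growth (Widder-type / local energy, or simply testing against the backward heat kernel times a cutoff) then forces $v\equiv 0$, which is precisely the asserted integral formula in $L^1_\loc(\R^3\times(0,T))$.

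Finally the last sentence: a local energy solution is by Definition \ref{def.localPressure} a local Leray solution whose pressure satisfies the LPE, hence a fortiori the DLPE, and a local Leray solution satisfies $u\in L^2_\uloc(0,T)$, $u_0\in L^2_\uloc\subset L^2_\loc$, and $u(t)\to u_0$ in $L^2(K)$ for compact $K$; one also needs $e^{t\Delta}u_0\in L^1_\uloc(0,T)$, which follows from $u_0\in L^2_\uloc$ by a standard heat-kernel estimate. Thus the hypotheses of Theorem \ref{thrm.pressureImpliesMild} are met and $u$ is mild. I expect the main obstacle to be the rigorous justification of the "reverse" pressure computation at the level of distributions when $u$ is merely $L^2_\uloc$ in space: one must be careful that $[-\Delta^{-1}\div\div](u_iu_j\th_R)$ is only a function at a.e.\ time if some extra integrability is available, and otherwise must work entirely through the pairing \eqref{eq.map}, checking that all the cutoff commutators and the far-field term combine to the claimed error with the claimed support properties — i.e.\ that nothing is lost when differentiating the distributional identity and passing $R\to\infty$. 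The semigroup mapping properties and the heat-equation uniqueness step are comparatively routine, modulo citing the appropriate lemmas from Section \ref{sec.mild.formulation}.
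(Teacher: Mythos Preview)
Your backbone --- show that the Oseen/Duhamel integral $\bar u$ satisfies the same forced heat equation as $u$ does, then apply heat-equation uniqueness --- matches the paper's. But the paper does not attempt the direct ``algebraic'' reversal of the DLPE you sketch. Instead it uses the mollification scheme of Section~\ref{sec.approx}: with $F^\e = \eta_\e *(u\otimes u)\in L^\infty$, the result of \cite[Lemma~3.1]{Kukavica} shows the Oseen integral $u^\e$ is a distributional solution of $\partial_t u^\e - \Delta u^\e + \nabla\cdot F^\e + \nabla p^\e = 0$ with $p^\e$ the $BMO$ pressure of Theorem~\ref{thrm.pressure}; then Lemmas~\ref{lemma.convergenceToIntegralForm} and \ref{lemma.pressureConvergence} give $u^\e\to\bar u$ and $\nabla p^\e\to\nabla p$ in $\mathcal D'$, yielding directly that $\bar u$ solves $\partial_t\bar u - \Delta\bar u + u\cdot\nabla u + \nabla p = 0$. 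No cutoff errors $E_{x_0,R}$ or $R\to\infty$ limits appear.

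The obstacle you flag at the end is a real gap in your plan, and the approximation is precisely what bridges it. Your assertion that ``$w$ solves $\partial_t w - \Delta w = -\mathbb P\nabla\cdot(u\otimes u)$ distributionally'' is the whole difficulty: the symbol $\mathbb P\nabla\cdot(u\otimes u)$ is only meaningful either as $\nabla\cdot(u\otimes u)+\nabla p$ (via the DLPE) or bundled inside the Oseen kernel, and matching the two for merely $L^2_\uloc$ data is not algebra --- your localization/$R\to\infty$ step tells you what equation $u$ satisfies, not what equation the Oseen integral $\bar u$ satisfies. For the final uniqueness step the paper also proceeds differently: rather than citing a Widder-type theorem in $L^1_\uloc$, it mollifies $w=u-\bar u$ in space and time, checks (using the assumed $L^2_\loc$ continuity of $u$ and the just-established $L^1_\loc$ continuity of $\bar u$ at $t=0$) that the mollification $w_\e$ is a \emph{bounded} distributional caloric function with $w_\e|_{t=0}=0$, and concludes $w_\e\equiv 0$ from classical uniqueness for bounded solutions of the heat equation.
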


\bigskip
{\bf Comments on Theorems \ref{thrm.pressureNSE} and \ref{thrm.pressureImpliesMild}} 
\begin{enumerate} 

\item This paper provides a new approach to ideas in \cite{LR} and \cite{LR2}. The major difference is the definition of $\mathbb P\nb \cdot (u\otimes u)$.   In \cite[Ch.~11]{LR} and \cite[Ch.~6]{LR2}, Lemari\'e-Rieusset defines $\mathbb P \nb\cdot (u\otimes u)$ in $\mathcal D'( \R^3\times (0,T) )$ using the Littlewood-Paley decomposition in the space variable under the assumption that $u\in L^2_\uloc(0,T)$. 
He additionally shows that there exists a distribution $P$ so that $\mathbb P  \nb\cdot (u\otimes u)=\nb \cdot (u\otimes u) +\nb P$. Again, $P$ is defined using the Littlewood-Paley decomposition.  Consequently, if $u$ solves 
\EQ{\label{eq.nse.diff}
\partial_t u - \Delta u +\mathbb P \nb \cdot (u\otimes u)=0,\qquad \nb\cdot u = 0,
}
as a distribution then $u$ and $P$ also solve
\[
\partial_t u  - \Delta u + \nb \cdot (u\otimes u) +\nb P=0,\qquad \nb\cdot u = 0,
\]
as distributions. This is the content of \cite[Theorem 11.1.i]{LR}.  In \cite[Theorem 11.2]{LR}, Lemari\'e-Rieusset proves that, assuming $u\in L^2_\uloc(0,T)$, $u$ is a distributional solution to 
\eqref{eq.nse.diff}
if and only if there exists a divergence free $u_0\in \mathcal S'(\R^3)$ so that  $u$ is a distributional solution to 
\[
u = e^{t\Delta}u_0 -\int_0^t e^{(t-s)\Delta}\mathbb P \nb \cdot (u\otimes u)\,ds.
\]
Taking these results together, Lemari\'e-Rieusset roughly proved that $u\in L^2_\uloc(0,T)$  is mild if and only if there exists a distribution $P$ defined using Littlewood-Paley so that $u$ and $P$ solve the Navier-Stokes equations as distributions.

We are essentially re-proving Lemari\'e-Rieusset's theorems via a different approach which we hope will prove useful in applications. In particular, we define the pressure via the DLPE, instead of using Littlewood-Paley.  Put differently, we can define the operator  $\mathbb P \nb\cdot$ directly as 
\[
 \mathbb P\nb \cdot (u\otimes u) = u\cdot \nb u + \nb p,
\]
where $ p$ satisfies the DLPE.
The motivation for this   are applications where the local pressure expansion is more appropriate. Indeed, the local pressure expansion has been used to good effect in a variety of papers (see e.g. \cite{BT8,JiaSverak-minimal,JiaSverak,KMT} and others),  but has not been examined in the full generality of Lemari\'e-Rieusset's equivalence theorem.    Additionally, we expect that by avoiding the Littlewood-Paley approach entirely, i.e.~by not using Lemari\'e-Rieusset's existing equivalence theorem and its associated definition of the pressure, our approach will prove useful on the half-space where a similar pressure formulation has been given \cite{MaMiPr}.

Lemari\'e-Rieusset explores the local pressure expansion  \cite[Lemma 6.4]{LR2}, but requires estimates on the gradient of $u$ to makes sense of it, which we do not.  Indeed,  \cite[Lemma 6.4]{LR2} essentially corresponds to the last sentence in Theorem \ref{thrm.pressureNSE}. It does not reach the full generality of Theorem \ref{thrm.pressureNSE} nor Theorem \ref{thrm.pressureImpliesMild}.

Note that Lemari\'e-Rieusset  includes a forcing in \cite{LR2} and we do not consider this here.  Also, in Lemari\'e-Rieusset's equivalence theorem \cite[Ch. 11]{LR}, the initial data is not mentioned explicitly (some conditions are implicit because $e^{t\Delta}u_0$ must make sense). In contrast, in Theorem \ref{thrm.pressureNSE}, we   assume  the solution converges locally to the initial data.

\item The condition from \cite{JiaSverak} implies $u$ is a mild solution.   Hence Theorem \ref{thrm.pressureNSE} can be seen as a generalization of it. 

\item  In Theorem \ref{thrm.pressureImpliesMild}, the continuity assumption at $t=0$ can be replaced by the assumption that $u\in L^\I (0,T;L^2_\uloc)$.  This condition implies $u\in L^2_\uloc(0,T)$.  

\item Local Leray solutions satisfy the assumption in Theorems \ref{thrm.pressureNSE} and  \ref{thrm.pressureImpliesMild} and hence local Leray solutions are mild if and only if they are local energy solutions.

\item 
A condition along the lines of  \eqref{condition.mild} is necessary because there exist solutions to the Navier-Stokes equations for which the local pressure expansion is not valid.  Such solutions are called ``parasitic'' in the literature.  In particular, let $u(x,t)=g(t)$ where $g$ is continuous and $g(0)=0$ and $p(x,t)=x \cdot g'(t)$.  Then $u$ and $p$ meet all the conditions in Theorem \ref{thrm.pressureNSE} except \eqref{condition.mild}, but the conclusion fails.  On the other hand, the trivial solution $u=p=0$ satisfies \eqref{condition.mild} and the conclusion holds.     
 
\item In the first theorem we require $e^{t\Delta}u_0\in L^1_\uloc(0,T)$. This holds for data $u_0$ in $L^1_\uloc$ 
or
$BMO^{-1}$. We also require $u\in   L^2_\uloc(0,T)$. This is a very weak assumption and holds e.g. for   the small data mild solutions in \cite{Koch-Tataru} (see also   \cite{LR,GIM,MaTe,KuVi}).  %

\end{enumerate}

The main tool used to prove Theorem \ref{thrm.pressureNSE} is a result on the structure  of solutions to a Poisson equation belonging to $BMO$.  This result is stated in Section \ref{sec.pressureformula}. In Section \ref{sec.presure.loc} we establish several properties of the DLPE.
The precise definition of mild solutions is recalled in Section \ref{sec.mild.formulation}. To connect the Poisson equation of Section \ref{sec.pressureformula} to the Navier-Stokes equations, we approximate the nonlinearity by bounded fields.  This approximation is explored in Section \ref{sec.approx}.  
We prove Theorems \ref{thrm.pressureNSE} and \ref{thrm.pressureImpliesMild} respectively in Sections \ref{sec.proof.main}  and \ref{sec.proof.main.2}.  The main idea is that the LPE and DLPE are equivalent to the mild formulation at the level of the approximated problem, and, upon taking limits, this equivalence also holds for a given solution.  Finally we include several applications in Section \ref{sec.applications}.
 
Note that one can try to approximate the nonlinearity with bounded, compactly supported terms as in \cite{KwTs} instead of just bounded fields.  The benefit is an easier analysis of the auxiliary Poisson equation (just use the $L^2$ theory), with a more complicated approximation of the nonlinearity. We choose to emphasize the $BMO$ approach to the Poisson equation since it is interesting in its own right and simplifies bootstrapping arguments. It also justifies the pressure expansion for the mollified equation used in \cite[Ch. 32]{LR}.  Finally, several other papers which rigorously establish the local pressure expansion do so using decaying solutions as a foundation \cite{KMT,KwTs} in their approximation. Our approach shows this decay is not essential.

Regarding notation, we do not distinguish between spaces of scalars and vectors, e.g. we will say $\phi\in C_c^\I(\R^3\times (0,T) )$ even when $\phi$ is a vector and we should technically say $\phi\in (C_c^\I(\R^3\times (0,T) ))^3$.

We finally note that   P.~G.~Fern\'andez-Dalgo and P.~G.~Lemari\'e-Rieusset very recently released an elegant paper \cite{FGLR2} which addresses questions about the pressure representation of distributional solutions in a very large class based on a weighted $L^2$ space appearing in \cite{FL,FDJ}.  Our comments in the introduction do not reflect developments in \cite{FGLR2}.

\section{A Poisson equation with non-decaying source}\label{sec.pressureformula}

In the note \cite{Feff}, C.~Fefferman states that, for $g\in L^\I$, the Riesz transforms can be defined by 
\EQN{
R_j^0(g)(x)=\lim_{\delta\to 0} \int_{\delta <|x-y|} [ K_j(x-y) -K_j^0(-y) ]\, g(y)\,dy,
}
where $j=1,2,3$,
\[
K_j(y)=c \frac {y_j} {|y|^4},
\]
and 
\[
K_j^0(y)=\begin{cases}
c \frac {y_j} {|y|^4} &\text{ if }y>1
\\ 0 &\text{ if }y\leq 1.
\end{cases}
\]
Note that for $g\in L^p$, the classical Riesz transforms are defined differently (namely, without $K_j^0$).  The modification here ensures $R_j^0(g)$ makes sense when $g$ is not decaying.  It agrees with the usual definition modulo a constant when $g$ has compact support.  This representation resembles the structure of the LPE, but with obvious differences.  A related discussion can be found in \cite[p.~156-157]{Stein}, where an equivalent characterization is given using duality.  In particular, if $\psi$ is bounded, compactly supported and has mean zero (implying it is in $\mathcal H^1$), and $f\in L^\I$, then the Riesz transform of $f$ satisfies
\[
[ R_i f] (\psi)  = -\int f (R_i \psi) \,dx.
\] 
Note that, on the right hand side, the Riesz transform {is} 
defined using the classical formula.
In this section, we examine the Riesz transforms understood using duality and their relationship to the $BMO$ solution of a Poisson equation.  %
In particular we establish  the LPE directly from the duality definition of Riesz transforms and check that the properties relating to a Poisson equation of the classical Riesz transforms remain valid for the modified transforms.  
 
If $p$ is the associated pressure for a solution $u$ of the Navier-Stokes equations,  then $p$ solves the Poisson equation
\EQ{\label{eq.poisson}
-\Delta p = \partial_i \partial_j f_{ij},
}
in the distributional sense where $f=(f_{ij})=(u_iu_j)$ and we are implicitly summing over $1\leq i,j\leq 3$. A  distribution $p$ solves \eqref{eq.poisson} if and only if
\EQ{\label{eq.poisson.weakform}
\langle p , \Delta\phi\rangle =- \int f_{ij} \pd_i \pd_j \phi \,dx, \quad \forall \phi \in C^\infty_c(\R^3),
}
where $\langle\cdot,\cdot\rangle$ represents the duality pairing for $\mathcal D'$ and $\mathcal D$.  If $p\in L^1_\loc$, we let 
\[
\langle p,\psi \rangle = \int p \, \psi \,dx.
\]
The right hand side {of \eqref{eq.poisson.weakform}} can be altered using \cite[p.~59]{Stein2} which says $\partial_i\partial_j  \phi =-R_iR_j \Delta \phi$ where $R_iR_j$ are the classical Riesz transforms. In particular, this means a distribution $p$ solves \eqref{eq.poisson} if and only if
\EQ{\label{eq.pandbarp}
\langle p, \Delta \phi \rangle
=  -\int f_{ij} \partial_i\partial_j  \phi \,dx = \int f_{ij} R_iR_j \Delta \phi \,dx.
}

We are interested in solutions $u$ of \eqref{eq.NSE} which are non-decaying. The case $u\in L^\I$ is a natural foundation to begin studying such solutions.  
This leads us to consider the problem
\eqref{eq.pandbarp}
where $f_{ij}\in L^\I$ for $1\leq i,j\leq 3$.

Above we specified what a solution $p$ to the Poisson equation should do when tested against $\Delta \phi$.   We now construct a solution $p\in BMO$ to \eqref{eq.poisson} given $f_{ij}\in L^\I$ for $1\leq i,j\leq 3$.
Recall that $\mathcal H^1_a$ is the dense subspace of $\mathcal H^1$ consisting of finite linear combinations of $\mathcal H^1$ atoms.  Note that this subspace consists of all bounded functions with compact support and mean zero \cite[p.~112]{Stein}.
  Let $l$ be the bounded linear function on $\mathcal H^1$ given by
\EQN{ 
l(h)=R_i R_j (f_{ij}) (h)=\int f_{ij} R_iR_j h \,dx,\text{ for all } h\in \mathcal H^1.
}
Since $R_iR_j h$ may not be in $\mathcal H^1_a$,  the above integral may not converge in general (see \cite[p.~142]{Stein}), but it does here because $f_{ij} \in L^\I$. In particular we have 
\[
\bigg|\int f_{ij}R_iR_j h\,dx \bigg| \le \norm{f}_\infty \norm{R_iR_j h}_{L^1} \lec \norm{f}_\infty \norm{R_iR_j h}_{\mathcal H^1} \lec \norm{f}_\infty \norm{h}_{\mathcal H^1}.
\]
This confirms that $l$ is a bounded linear functional on $\mathcal H^1$.
Thus, by \cite[p.~142]{Stein}, there exists $p\in BMO$ so that
\[
\int p \,h \,dx = l(h) \text{ for all }h\in \mathcal H^1_a.
\]

Whenever $\phi \in C_c^\I$, $\Delta\phi$ has mean zero and is compactly supported and bounded.  Hence it is in $\mathcal H^1_a$.  So, for all $\phi \in C_c^\I$ and letting $h=\Delta \phi$ in the above identity, we have
\[
\int p \Delta \phi \,dx=\int f_{ij} R_iR_j \Delta \phi \,dx,
\]
i.e.~$p$ satisfies the Poisson equation in the sense of distributions.     Furthermore, note that if $\phi \in (C_0^1)^3$, then $\nb\cdot \phi$ has mean zero by the divergence theorem.  Hence $\nb\cdot \phi \in \mathcal H^1_a$ and, therefore,
\[
\int p\nb \cdot \phi \,dx=\int f_{ij}R_iR_j  \nb \cdot \phi \,dx.
\]

The following theorem elaborates on the above formula.  In particular, it gives an explicit formula for solutions to \eqref{eq.poisson} and allows us to move the integral operators from $\nb\cdot \phi$ to $f_{ij}$.

\begin{theorem} \label{thrm.pressure}
Assume that $p\in BMO$ solves the Poisson equation
\EQ{\label{eq.poisson-2} 
\Delta p = -\partial_i\partial_j f_{ij},
}
in the distributional sense where $f_{ij}\in L^\I\cap C^\al_\loc$, for some $\al>0$ and $1\leq i,j\leq 3$ (we implicitly sum over $i$ and $j$).

Define $\bar p$  as follows: 
\begin{itemize}
\item If $x\in B_1(0)$ then $\bar p (x)= G_{ij}^{B_2(0)} f_{ij}(x)$.
\item If $x\in B_{n}(0)$ then $\bar p (x)= G_{ij}^{B_{n+1}(0)}f_{ij} (x)+ \sum_{k=2}^n c_k$, where  
\[
c_k =     G_{ij}^{B_{k}(0)}f_{ij}-G_{ij}^{B_{k+1}(0)}f_{ij}=   {-}\int \big( K_{ij}^{2k} (-y)   - K_{ij}^{2(k+1)}(-y)\big) f_{ij} (y)\,dy,
\]
is a constant. %
\end{itemize}
Then, $\bar p$ is a well defined  distributional solution to \eqref{eq.poisson-2} and, for any ball $B$,
\EQ{\label{eq.pressureformula}
\int_{\R^3} p \,  \psi  \,dx = \int_{\R^3} (G_{ij}^Bf_{ij})\psi\,dx=\int_{\R^3}\bar p  \,\psi\,dx ,
}
for all $\psi$ which are bounded,  supported in $B$, and have mean zero (i.e. $\psi\in \mathcal H^1_a$).  This implies $p=\bar p$ in $BMO$.

\end{theorem}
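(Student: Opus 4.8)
The plan is to establish the three assertions of Theorem~\ref{thrm.pressure} in turn — that $\bar p$ is a well-defined function, that it solves \eqref{eq.poisson-2} distributionally, and that \eqref{eq.pressureformula} holds — after which the conclusion $p=\bar p$ in $BMO$ is a short duality argument. \textbf{Well-definedness and the Poisson equation.} First I would check that each $c_k$ is a finite constant: the integrand $K_{ij}^{2(k+1)}(-y)-K_{ij}^{2k}(-y)=K_{ij}(-y)\big(\th_k(-y)-\th_{k+1}(-y)\big)$ is supported in the bounded annulus $\{2k\le|y|\le 4(k+1)\}$, on which $K_{ij}$ is bounded, so $|c_k|\lec\|f\|_\I$. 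Next, $G_{ij}^{B_{n+1}(0)}f_{ij}(x)$ is well defined for $x\in B_n(0)$: the principal value matters only near $y=x$, where $K_{ij}^{2(n+1)}(-y)$ vanishes identically (since $|{-}y|<2(n+1)$ there) and the Calder\'on--Zygmund singularity of $K_{ij}(x-y)$ is tamed by $f_{ij}\in C^\al_\loc$; in the far field $K_{ij}^{2(n+1)}(-y)=K_{ij}(-y)$ and $|K_{ij}(x-y)-K_{ij}(-y)|\lec|x|\,|y|^{-4}$ by the mean value theorem, integrable against $f_{ij}\in L^\I$. The two prescriptions for $\bar p$ on the overlap $B_n(0)\subset B_{n+1}(0)$ coincide by the very definition of $c_{n+1}$, so $\bar p$ is unambiguous; by \eqref{eq.pressureexpansion} and standard potential/Schauder estimates $\bar p\in C^\al_\loc$. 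Since solving \eqref{eq.poisson-2} is local and constants are annihilated by $\De$, it then suffices to show $\De G_{ij}^{B_{n+1}(0)}f_{ij}=-\pd_i\pd_j f_{ij}$ on $B_n(0)$: in the decomposition \eqref{eq.pressureexpansion} with $B=B_{n+1}(0)$, the near term is $\pd_i\pd_j$ of the Newtonian potential of the compactly supported $L^\I\cap C^\al_\loc$ field $f_{ij}\th_{n+1}(-\cdot)$, whose $-\De$ equals $\pd_i\pd_j f_{ij}$ on the open set where $\th_{n+1}(-\cdot)=1$ (which contains $B_n(0)$); the far term has $x$-integrand supported in $\{|y|>2(n+1)\}$, so for $x\in B_n(0)$ one differentiates twice under the integral and uses $\De_x K_{ij}(x-y)=0$ for $x\ne y$ to see it is harmonic on $B_n(0)$. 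Adding $\sum_{k=2}^n c_k$ gives the identity on $B_n(0)$, hence everywhere.

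\textbf{The identity \eqref{eq.pressureformula}.} This is the heart of the proof. Fix a ball $B=B_R(x_0)$ and $\psi\in\mathcal H^1_a$ supported in $B$. By the construction of $p$ recalled just before the theorem, $\int p\,\psi\,dx=\int f_{ij}\,R_iR_j\psi\,dx$ (the right side is absolutely convergent since $R_iR_j\psi\in\mathcal H^1\subset L^1$), so it is enough to prove $\int (G_{ij}^Bf_{ij})\psi\,dx=\int f_{ij}\,R_iR_j\psi\,dx$. Into the left side I would insert the decomposition \eqref{eq.pressureexpansion} for $G_{ij}^Bf_{ij}$ on $B$ (legitimate, as $\psi$ is supported in $B$). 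For the near term, $[-\De^{-1}\div\div]_{ij}(g)=R_iR_j g$ with $g=f_{ij}\th_R(x_0-\cdot)\in L^2$, and $L^2$ self-adjointness of $R_iR_j$ gives $\int(R_iR_jg)\psi=\int g\,R_iR_j\psi=\int f_{ij}\th_R(x_0-\cdot)\,R_iR_j\psi$. For the far term, Fubini is justified by the bound $O\big(R\,|y-x_0|^{-4}\big)$ on the $y$-integrand over $\{|y-x_0|>2R\}$; crucially, the mean-zero property of $\psi$ lets me discard the $K_{ij}(x_0-y)$ piece, since $\int_B K_{ij}(x_0-y)\psi(x)\,dx=K_{ij}(x_0-y)\int\psi=0$, while for $y\notin\bar B$ the evenness of $K_{ij}$ gives $\int_B K_{ij}(x-y)\psi(x)\,dx=R_iR_j\psi(y)$, so the far term contributes $\int R_iR_j\psi(y)\,\big(1-\th_R(x_0-y)\big)f_{ij}(y)\,dy$. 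As $\th_R(x_0-\cdot)+\big(1-\th_R(x_0-\cdot)\big)\equiv1$, the two contributions add up to $\int f_{ij}\,R_iR_j\psi$, as required.

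\textbf{Conclusion and the main obstacle.} Given $\psi\in\mathcal H^1_a$ supported in a ball $B$, choose $n$ with $B\subset B_n(0)$; on the support of $\psi$ we have $\bar p=G_{ij}^{B_{n+1}(0)}f_{ij}+\sum_{k=2}^n c_k$, and since $\int\psi=0$ the constant drops, so $\int\bar p\,\psi=\int(G_{ij}^{B_{n+1}(0)}f_{ij})\psi$. Applying the previous step with the balls $B_{n+1}(0)$ and $B$ shows both $\int(G_{ij}^{B_{n+1}(0)}f_{ij})\psi$ and $\int(G_{ij}^Bf_{ij})\psi$ equal $\int f_{ij}R_iR_j\psi=\int p\,\psi$, which is \eqref{eq.pressureformula}. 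Finally $p-\bar p\in L^1_\loc$ annihilates every bounded, compactly supported, mean-zero function and is therefore a.e.\ constant, so $\bar p\in BMO$ and $p=\bar p$ in $BMO$. The decisive difficulty is the far-term manipulation in the middle step: because the Riesz kernel decays only like $|y|^{-3}$, a naive exchange of integrals diverges against the non-decaying $f_{ij}\in L^\I$, and one has to exploit the cancellation built into $G_{ij}^B$ — equivalently, the mean-zero condition on $\psi$ — to make every integral absolutely convergent before invoking Fubini.
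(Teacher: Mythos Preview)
Your proof is correct, but the route you take for the key identity \eqref{eq.pressureformula} differs substantially from the paper's. The paper works directly with the defining $\epsilon$-truncated singular integrals in $G_{ij}$ and $G_{ij}^B$: it writes $\int f_{ij}\,\tilde G_{ij}\psi$ as a double limit in a truncation parameter $N$ and a spatial cutoff $M$, inserts the subtracted kernel $K_{ij}^{2R}$ using the mean-zero of $\psi$, and then justifies the interchange of limits and the Fubini step via a separately proved technical lemma (Lemma~\ref{lemma.forDCT}) giving uniform integrable dominators. Because those dominators depend on H\"older norms, the paper first establishes \eqref{eq.pressureformula} only for $\psi\in C^\al_\loc$ with compact support and mean zero, and then passes to general $\psi\in\mathcal H^1_a$ by mollification and the $BMO$--$\mathcal H^1$ pairing.

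Your argument bypasses all of this by exploiting the near/far decomposition \eqref{eq.pressureexpansion} of $G_{ij}^B f_{ij}$ on $B$. The near piece is $R_iR_j$ of the compactly supported $L^2$ function $f_{ij}\th_R(x_0-\cdot)$, so self-adjointness of $R_iR_j$ on $L^2$ moves the operator onto $\psi$ with no limiting procedure; the far piece has a kernel of order $|y-x_0|^{-4}$, so Fubini is immediate, and the mean-zero of $\psi$ kills the $K_{ij}(x_0-y)$ term while evenness of $K_{ij}$ identifies the remaining inner integral as $R_iR_j\psi(y)$ for $y\notin B$. This works directly for every $\psi\in\mathcal H^1_a$, so no mollification step is needed, and the auxiliary Lemma~\ref{lemma.forDCT} is never invoked. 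The trade-off is that the paper's approach stays closer to the principal-value definition of $G_{ij}^B$ and makes the role of the $C^\al_\loc$ hypothesis on $f_{ij}$ more visible throughout, whereas in your argument that hypothesis is used only for the pointwise well-definedness and local H\"older regularity of $\bar p$.
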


\noindent {\bf Comments on Theorem \ref{thrm.pressure}}.
\begin{enumerate}

\item By construction, $\bar p \in L^q_\loc$ for any $q<\infty$ using $f_{ij}\in L^\infty$. It is indeed in $L^\infty_\loc$ using $f_{ij}\in  L^\infty\cap C^\al_\loc$.

\item We will apply this theorem to smooth approximations and hence the $C^\al_\loc$ assumption is not an obstacle.

\item Assuming %
a distributional solution $p$ of \eqref{eq.poisson-2} is in $BMO$ is equivalent to assuming $p=R_iR_j f_{ij}$.  This is because $R_iR_j f_{ij}$ is in $BMO$ and is also a solution to the Poisson equation. Uniqueness in $BMO$ implies $p=R_iR_j f_{ij}$ \cite{Kukavica}. That $\bar p\in BMO$ follows from the fact that $p$ and $\bar p$ behave the same as functionals on a dense subset of $\mathcal H^1$.  This implies that $R_iR_j f_{ij}=\bar p$ in BMO.

\item 
In Theorem \ref{thrm.pressure}, we are checking that the dual characterization of $p$ agrees in $BMO$ with the explicit formula for $\bar p$.   Understanding $p\in BMO$ when $u\in L^\I$ is classical; see for example \cite{Kukavica,KuVi,GIM, Tsai-ARMA}.  
We include a detailed proof under general conditions and explicitly check that the properties of classical Riesz transforms extend to the modified formulas because we cannot find these details concisely contained in the literature.

\item We may replace the sum appearing in the definition of $\bar p$ by
\[
\td c_n =  \sum_{k=2}^n c_k =  G_{ij}^{B_{2}(0)}f_{ij}-G_{ij}^{B_{n+1}(0)}f_{ij}= {-}\int \big( K_{ij}^{2} (-y)   - K_{ij}^{2(n+1)}(-y)\big) f_{ij} (y)\,dy.
\]
Clearly $\td c_n \lesssim \ln (n)$ whenever $f_{ij}$ is in $L^\I$ because
\EQN{
G_{ij}^{B_{{2}}(0)}f_{ij}-G_{ij}^{B_{n+1}(0)}f_{ij}\leq C \int_{1\leq |y|\leq 4n+4}  \frac 1 {|y|^3}\,dy \lesssim \ln n,
}
as $n\to \I$. 
This is consistent with $\bar p \in BMO$.
\end{enumerate}

 The proof of Theorem \ref{thrm.pressure} is conceptually straightforward: Expand the integral operator and use Fubini's theorem to move the operator from the test function to $f_{ij}$.  There are, however,  several applications of the dominated convergence theorem, the justification for which gets rather technical.  We check that the premises of the dominated convergence theorem are satisfied in the following lemma.  
 
\begin{lemma}\label{lemma.forDCT}Fix $x_0\in \R^3$ and $R>0$.  
For $1\leq i,j\leq 3$, assume that $f_{ij}\in L^\I\cap C^\al_{\loc}$ and $\psi\in C^\al_{\loc}$ for some $\al>0$, $\supp \psi \subset B_R(x_0)$, and $\int \psi\,dx = 0$.  Then, there exist  non-negative integrable functions $F_1,\,F_2,$ so that
\begin{enumerate}
\item  $|F_1^N(x)|\leq F_1(x)$ for all $x\in \R^3$ and $N>0$, where  
\[  F_1^N(x)=  f_{ij}(x) \int_{N^{-1}< |x-y| }  (K_{ij} (x-y) - K_{ij}^{2R}(x-x_0)) \psi (y)\,dy,
\]
for any $1\leq i,j\leq 3$ (in other words, there exists a single function $F_1(x)$ so that, for any choice of $i$ and $j$, $F_1^N(x)$ is bounded).
\item $|F_2^{N,M}(y)|\leq F_2(y)$ for all $y\in \R^3$, $N\in (0,\I]$ and $M\geq 2R$, where, for $N<\I$
\[
F_2^{N,M}(y) = \psi(y)  \int_{N^{-1}<|x-y| ;|x-x_0|<M}    \big(K_{ij}(x-y)-K_{ij}^{2R}(x-x_0)\big)f_{ij}(x)\,dx,
\] 
and for $N=\I$
\[
F_2^{N,M}(y) = \psi(y)  \lim_{\underline{N}\to \I} \int_{{\underline{N}}^{-1}<|x-y| ;|x-x_0|<M}    \big(K_{ij}(x-y)-K_{ij}^{2R}(x-x_0)\big)f_{ij}(x)\,dx,
\]
for any $1\leq i,j\leq 3$ (in other words, there exists a single function $F_2(y)$ so that, for any choice of $i$ and $j$, $F_2^{N,M}(y)$ is bounded).
\end{enumerate}
\end{lemma}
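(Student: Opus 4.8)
The plan is to establish both statements by the same near/far decomposition of the integration variable --- the variable $x$ in $F_1^N$, the variable $x$ in $F_2^{N,M}$, with the outer variable fixed in $\supp\psi\subset B_R(x_0)$. Split $\R^3$ into a near zone $B_{8R}(x_0)$ and its complement, the far zone. The only properties of the kernels I will use, all of them uniform in $i,j$, are: the pointwise bounds $|K_{ij}(z)|\lesssim|z|^{-3}$ and $|\nabla K_{ij}(z)|\lesssim|z|^{-4}$ for $z\neq0$; the vanishing of spherical averages $\int_{|z|=r}K_{ij}(z)\,d\sigma(z)=0$, and hence $\int_{a<|z|<b}K_{ij}(z)\,dz=0$; and the facts that $K_{ij}^{2R}(x-x_0)$ vanishes on $B_{2R}(x_0)$, is $\lesssim R^{-3}$ on $B_{8R}(x_0)$, and coincides with $K_{ij}(x-x_0)$ off $B_{4R}(x_0)$. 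All the bounds below depend only on $R$ and on the quantities $\|f_{ij}\|_\infty$, $\|\psi\|_{L^1}$, $\|\psi\|_\infty$, $[\psi]_{C^\alpha}$, $[f_{ij}]_{C^\alpha(B_{2R}(x_0))}$, so a single admissible $F_1$ (resp. $F_2$) results from taking the maximum over the nine pairs $(i,j)$.

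For $F_1$: on the far zone the kernel equals $K_{ij}(x-y)-K_{ij}(x-x_0)$, and since $y$ ranges in $B_R(x_0)$ the segment joining $x_0$ to $y$ stays in $B_R(x_0)$, hence at distance $\gtrsim|x-x_0|$ from $x$; the mean value theorem together with $|\nabla K_{ij}|\lesssim|\cdot|^{-4}$ gives $|K_{ij}(x-y)-K_{ij}(x-x_0)|\lesssim R|x-x_0|^{-4}$ for every such $y$, so $|F_1^N(x)|\lesssim\|f_{ij}\|_\infty\|\psi\|_{L^1}\,R|x-x_0|^{-4}$, a bound independent of $N$ (the truncation only shrinks the $y$-domain) and integrable on the far zone. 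On the near zone one splits the kernel: the $K_{ij}^{2R}(x-x_0)$ contribution is $\lesssim R^{-3}$ times $\bigl|\int_{|x-y|>N^{-1}}\psi\bigr|\le\|\psi\|_{L^1}$; for the $K_{ij}(x-y)$ contribution one fixes $\delta=R/2$, uses $|K_{ij}|\lesssim R^{-3}$ on $\{|x-y|\ge\delta\}\cap\supp\psi$ (a set of measure $\lesssim R^3$), and on the annulus $\{N^{-1}<|x-y|<\delta\}$ uses $\int_{N^{-1}<|x-y|<\delta}K_{ij}(x-y)\,dy=0$ to replace $\psi(y)$ by $\psi(y)-\psi(x)$, after which $|\psi(y)-\psi(x)|\lesssim[\psi]_{C^\alpha}|x-y|^\alpha$ renders $\int_{|x-y|<\delta}|x-y|^{-3+\alpha}\,dy\lesssim R^\alpha$ convergent uniformly in $N$. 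Hence $F_1:=C_1\bigl(\mathbf 1_{B_{8R}(x_0)}+R|x-x_0|^{-4}\mathbf 1_{\R^3\setminus B_{8R}(x_0)}\bigr)$ works.

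For $F_2$ the argument is the mirror image, now integrating in $x$ under the additional constraint $|x-x_0|<M$, which only removes mass and so is harmless for upper bounds; in particular the far-zone contribution is $\lesssim\|f_{ij}\|_\infty R\int_{|x-x_0|>8R}|x-x_0|^{-4}\,dx$, uniform in $N$ and $M$. The Hölder continuity now invoked is that of $f_{ij}$, and the cancellation step on $\{N^{-1}<|x-y|<R/2\}$ is legitimate because $y\in B_R(x_0)$ and $|x-y|<R/2$ force $|x-x_0|<\tfrac{3}{2}R<2R\le M$, so the whole annulus lies inside the region of integration. The $N=\infty$ case is identical: each piece entering the estimate is already absolutely convergent, so the inner limit $\underline N\to\infty$ exists and obeys the same bound; one takes $F_2:=C_2\,\mathbf 1_{B_R(x_0)}$. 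These two functions are exactly what is needed to pass $N\to\infty$, and then $M\to\infty$, under the integral sign in the proof of Theorem \ref{thrm.pressure}.

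The main obstacle is the uniformity in the truncation parameters. It dictates two design choices: in the far zone one must not invoke $\int\psi=0$ (that identity only helps once the $y$-domain is all of $\supp\psi$) but instead use the pointwise kernel-difference estimate, whose $|x-x_0|^{-4}$ decay is stable under truncating the domain; and the spherical-cancellation step near the diagonal is available only after checking that the principal-value annulus is contained in the domain of integration --- automatic for $F_1$, and for $F_2$ precisely where $M\ge 2R$ and $\supp\psi\subset B_R(x_0)$ enter. The rest is routine Calderon-Zygmund bookkeeping.
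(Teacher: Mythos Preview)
Your proof is correct and follows essentially the same route as the paper's: a near/far split in the integration variable, the mean-value kernel-difference bound $|K_{ij}(x-y)-K_{ij}^{2R}(x-x_0)|\lesssim R|x-x_0|^{-4}$ in the far zone, and spherical cancellation plus H\"older continuity (of $\psi$ for part~1, of $f_{ij}$ for part~2) near the diagonal. The only cosmetic difference is your cutoff scale $8R$ versus the paper's $2R$: the paper's choice makes $K_{ij}^{2R}(x-x_0)$ vanish identically on the near zone (so no separate $R^{-3}$ term is needed there), at the price of having to handle the transition region $2R<|x-x_0|<4R$ directly in the far-zone estimate; your choice pushes that bookkeeping into the near zone instead. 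Your verification that the full annulus $\{N^{-1}<|x-y|<R/2\}$ sits inside $\{|x-x_0|<M\}$ when $M\ge 2R$ is exactly the point where the hypothesis $M\ge 2R$ is used, matching the paper's reasoning.
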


The functions $F_1^N, F_1, F_2^{N,M}$ and $F_2$ are everywhere  defined because $f_{ij},\psi \in C^\al_\loc$. The definitions of $F_1$ and $F_2$ depend on $\norm{f}_{C^\al(B_{2R}(x_0))}$ and $\norm{\psi}_{C^\al}$.
Since the proof of Lemma \ref{lemma.forDCT} is technical, we postpone it until after the proof of Theorem \ref{thrm.pressure}.

\begin{proof}[Proof of Theorem \ref{thrm.pressure}]

We first show $\bar p$ is well defined.  Fix $x$ and let $n$ be the smallest natural number so that $x\in B_n(0)$. On one hand we have defined
\[
\bar p(x)= G_{ij}^{B_{n+1}(0)}f_{ij}(x) +\sum_{k=2}^n c_k.
\]
For $\bar p$ to be well defined this must agree with 
\[
G_{ij}^{B_{m+1}(0)}f_{ij}(x) +\sum_{k=2}^{m} c_k.
\]
for all $m>n$.  Letting $m=n+1$ we have 
\EQN{
G_{ij}^{B_{n+2}(0)}f_{ij}(x) +\sum_{k=2}^{n+1} c_k  -G_{ij}^{B_{n+1}(0)}f_{ij}(x) -\sum_{k=2}^n c_k
&= 0,
}
by the definition of $c_{n+1}$.
For induction assume for some $m>n$ that
\EQN{
G_{ij}^{B_{m+1}(0)}f_{ij}(x) +\sum_{k=2}^m c_k= G_{ij}^{B_{n+1}(0)}f_{ij}(x) +\sum_{k=2}^n c_k.
}
Then
\EQN{
&G_{ij}^{B_{m+2}(0)}f_{ij}(x) +\sum_{k=2}^{m+1} c_k -   G_{ij}^{B_{n+1}(0)}f_{ij}(x) +\sum_{k=2}^n c_k
\\&=G_{ij}^{B_{m+2}(0)}f_{ij}(x) +\sum_{k=2}^{m+1} c_k-   G_{ij}^{B_{m+1}(0)}f_{ij}(x) +\sum_{k=2}^m c_k
=0,
}
implying $\bar p$ is unambiguously defined.

Next, assume that $\psi:\R^3\to\R$ with $\supp \psi \subset B=B_R(0)$, $\psi\in C^{\al}_\loc$, and $\int \psi \,dx=0$.  We will first prove 
\[\int_{\R^3} p \,  \psi  \,dx = \int_{\R^3} (G_{ij}^Bf_{ij})\psi\,dx=\int_{\R^3}\bar p  \,\psi\,dx,\]
under these assumptions on $\psi$ and then extend this to  $\psi\in \mathcal H^1_a$.

Since $\psi$ is supported in $B$, $G_{ij}^B (\psi) =  G_{ij}  ( 
\psi)$. %
This is obvious given that $K_{ij}^{2R}(-y)=0$ whenever $\psi(y)\neq 0$.

Note that $\psi\in \mathcal H^1_a\subset \mathcal H^1$ because it is bounded, compactly supported and has mean zero.  Hence
$G_{ij}  (\psi)\in \mathcal H^1\subset L^1$.  Due to this, and the fact that $f_{ij}\in L^\I$, the integral
\[
 \int f_{ij} G_{ij} \psi\,dx
\]
converges absolutely by H\"older's inequality. 
We also claim that
\[
\bigg|\int  G_{ij}^B( f_{ij}) \psi \,dx\bigg| <\I. 
\]
Indeed, we have  
\EQN{
&\int  G_{ij}^B( f_{ij}) \psi \,dx  
\\&=  -\frac {\delta_{ij}}{3}\int f_{ij}\psi\,dx  +\int \bigg( \lim_{\e\to 0}    \int_{  |x-y|>\e} K_{ij}(x-y) f_{ij}(y) \chi_{B_{2R}(0)}(y)\,dy   \bigg) \psi (x)\,dx 
\\&+
\int \bigg( \int (K_{ij}(x-y)-K_{ij}^{2R} (-y)  )  f_{ij}(y)  (1-\chi_{B_{ 2R}(0)}(y))\,dy \bigg) \psi (x)\,dx.
}
The first term is finite by H\"older's inequality.
Since $f \chi_{B_{2R}(0)}$ is in $L^\I$, the  singular integral term above is in $BMO$.  Furthermore, $\psi\in \mathcal H^1_a$ implying 
\EQN{
\bigg|  \int \bigg( \lim_{\e\to 0}    \int_{  |x-y|>\e} K_{ij}(x-y) f_{ij}(y) \chi_{B_{2R}(0)}(y)\,dy   \bigg) \psi (x)\,dx \bigg| &\leq C\|\psi\|_{\mathcal H^1}\| f\|_{L^\I}.
}
For the last term note that, by the extra decay of the kernel when $y\notin B_{2R}(0)$ and $x\in B_R(0)$,
\[
\bigg|\int \bigg(  \int(K_{ij}(x-y)-K_{ij}^{2R} (-y)  )  f(y)  (1-\chi_{B_{ 2R}(0)}(y))\,dy \bigg) \psi (x)\,dx \bigg|
\leq C\norm{\psi}_{L^1} \|f\|_{L^\I}.
\]

We now show that, for fixed $i,j$ (not summed),
\EQN{%
\int f_{ij} G_{ij}  \psi \,dx   =  \int  G_{ij}^B( f_{ij})  \psi \,dx .  
}
This statement is equivalent to 
\EQ{\label{Gij.id}
\int f_{ij} \tilde G_{ij} \psi \,dx   =  \int  \tilde G_{ij}^B( f_{ij}) \psi\,dx ,
}
where $\tilde G_{ij}$ and $\tilde G_{ij}^B$ are the integral parts of $G_{ij}$ and $G_{ij}^B$, respectively, 
$ G_{ij} f = \tilde G_{ij} f + \frac 13 \de_{ij} f$, and $ G_{ij}^B f =\tilde G_{ij}^B f + \frac 13 \de_{ij} f$.
The indices $i,j$ are \emph{not} summed.   

Using this definition we have
\[
\int| f_{ij} \td G_{ij} \psi |\,dx <\I,
\]
since this is true for $\int  |f_{ij}  G_{ij} \psi |\,dx $, and $\frac 1 3 \int |f_{ij} \delta_{ij} \psi |\,dx $ is obviously finite.
Hence,
\[
\int f_{ij} \td G_{ij}\psi\,dx=\lim_{M\to \I} \int_{ |x-0| <M} f_{ij} \td G_{ij} \psi\,dx.
\]

Note that for any $N,M>0$, 
\[
\int_{|x|<M} f_{ij}(x)K_{ij}^{2R} (x) \int_{N^{-1}<|x-y|}\psi(y)\,dy\,dx <\I,
\]
because the regions of integration and the integrands are all bounded.
Furthermore,  for any $M>0$, because $\psi$ is integrable and mean zero, 
\EQN{\label{eq.meanzero}
&\int_{|x|<M} f_{ij}(x) \lim_{N\to\I}\int_{N^{-1}<|x-y| } K_{ij}^{2R}(x) \psi (y)\,dy\,dx
\\&= \int_{|x|<M} f_{ij}(x)K_{ij}^{2R}(x) \lim_{N\to\I}\int_{N^{-1}<|x-y| } \psi (y)\,dy\,dx
\\&= \int_{|x|<M} f_{ij}(x)K_{ij}^{2R}(x) \int  \psi (y)\,dy\,dx
=0.
}

We are ready to establish  \eqref{Gij.id}.  
We have
\[
  \int   f_{ij}(x) {\tilde G}_{ij} \psi (x)\,dx ={\lim_{M\to \I} I_M},
  \quad {I_M} =   \int_{|x|<M}   f_{ij}(x) {\tilde G}_{ij} \psi (x)\,dx.
\]
Using the definition of {$\td G_{ij}$},
\EQN{
  {I_M}=    \int_{|x|<M}   f_{ij}(x) \lim_{N\to \I}\int_{N^{-1}< |x-y|} K_{ij} (x-y) \psi (y)\,dy\,dx .
}
The {last} limit is valid in $L^q$ and also pointwise since $\psi\in C^\al$. By equation \eqref{eq.meanzero},
 the dominated convergence theorem with limit in $N\to\I$ (noting the integrability and pointwise bounds established in part 1 of Lemma \ref{lemma.forDCT} with $f_{ij},\psi\in C^\al_\loc$), we have
\EQN{
{I_M} 
&=     \int_{ |x|<M}  f_{ij}(x)\lim_{N\to \I}\int_{N^{-1}< |x-y|}  (K_{ij} (x-y) - K_{ij}^{2R}(x)) \psi (y)\,dy\,dx 
\\&= \lim_{N\to \I}    \int_{ |x|<M} \int_{N^{-1}< |x-y|} f_{ij}(x) (K_{ij} (x-y) - K_{ij}^{2R}(x)) \psi (y)\,dy\,dx
\\&= \lim_{N\to \I}    \int  \int  \chi_{B_M(x)}(x)  \chi_{\R^3\setminus B_{N^{-1}}(x)}(y)  f_{ij}(x) 
\\&\qquad\qquad\qquad \times (K_{ij} (x-y) - K_{ij}^{2R}(x)) \psi (y)\,dy\,dx.
}     
For $M$ and $N$ fixed,
\EQN{
 & |\chi_{B_M(0)}(x)  \chi_{\R^3\setminus B_{N^{-1}}(x)}(y)  f_{ij}(x) (K_{ij} (x-y) - K_{ij}^{2R}(x)) \psi (y)|
 \\&\leq  \|f_{ij}\|_{L^\I} \chi_{B_M(0)}(x)  |\psi (y)| \big(  \|K_{ij}  \chi_{\R^3\setminus B_{N^{-1} }(0)}   \|_{L^\I} +\|K_{ij}^{2R}\|_{L^\I} \big).
}
The product of two measurable functions each depending on only one variable is measurable in the product measure.
Since the support of $K_{ij}  \chi_{\R^3\setminus B_{N^{-1} }(0)} $ is bounded away from zero and $ \chi_{B_M(0)}(x) $ and $ |\psi_{0}(y)|$ are bounded and compactly supported, the above function is integrable in the product measure. 
We may therefore use  Fubini's theorem to obtain
\EQN{
{I_M}&= \lim_{N \to \I} \bigg[ \int  \psi (y)  \int \chi_{B_M(0)}(x)  \chi_{\R^3\setminus B_{N^{-1}}(y)}(x) 
 \\&\qquad\qquad\qquad \times f_{ij}(x) (K_{ij} (x-y) - K_{ij}^{2R}(x)) \,dx\,dy\bigg].
}

Considering the above limits, part 2 of Lemma \ref{lemma.forDCT} and the dominated convergence theorem allow us to pass the limit in $N$ through the outside integral.  Hence,
\EQN{
 {I_M}=  \int  \psi (y)  \lim_{N \to \I} \int_{|x-y|>N^{-1}} \chi_{B_M(0)}(x)  
 f_{ij}(x) (K_{ij} (x-y) - K_{ij}^{2R}(x)) \,dx\,dy .
} 
 We furthermore have 
\EQN{
&{ \int   f_{ij}(x) {\tilde G}_{ij} \psi (x)\,dx = \lim_{M \to \I}I_M}
\\&=\int  \psi (y) \lim_{M\to \I} \lim_{N \to \I} \int_{|x-y|>N^{-1}} \chi_{B_M(0)}(x)  
 f_{ij}(x) (K_{ij} (x-y) - K_{ij}^{2R}(x)) \,dx\,dy
}
where we used the dominated convergence theorem and Lemma \ref{lemma.forDCT} (part 2 with $N=\I$) to move the limit in $M$ through the outside integral.  To conclude we must show that, for almost every $y$,
\EQ{\label{eq.limitM} 
&\lim_{M\to \I} \lim_{N \to \I} \int_{|x-y|>N^{-1}} \chi_{B_M(0)}(x)    f_{ij}(x) (K_{ij} (x-y) - K_{ij}^{2R}(x)) \,dx 
\\&= \lim_{N \to \I} \int_{|x-y|>N^{-1}}   f_{ij}(x) (K_{ij} (x-y) - K_{ij}^{2R}(x)) \,dx
 = \td {G}_{ij}^B f_{ij} (y).
}
Note that for $M$ fixed and $M>2R$, the limits in $N$ are both convergent pointwise, which follows because the singularities of the inside integrals are depleted by H\"older continuity while the tails are integrable.  
We can therefore take the difference of the terms in \eqref{eq.limitM} to obtain, for almost every $y$,
\EQN{
&\bigg|\lim_{N\to \I} \int_{|x-y|>N^{-1}} (\chi_{B_M(0)}(x) -1 )  f_{ij}(x) (K_{ij} (x-y) - K_{ij}^{2R}(x)) \,dx\bigg|
\\&\leq CR \|f_{ij}\|_{L^\I} \int (\chi_{B_M(0)}(x) -1 ) \frac 1 {|x|^4}  \,dx,
}
where we used the fact that $y\in B_R(0)$ and $|x|>2R$.
This clearly vanishes as $M\to \I$, and so \eqref{eq.limitM} holds for all $y$.  We have shown \eqref{Gij.id} for fixed $i,j$.

This proves that, when $\psi:\R^3\to\R$ with $\supp \psi \subset B_R(0)$, $\psi\in C^{\al}_\loc$, and $\int \psi \,dx=0$, we have  (now suming in $i,j$)
\EQN{ 
\int p \psi\,dx = \int   f_{ij}(x) { G}_{ij} \psi  (x)\,dx
 = \int{G}_{ij}^B f_{ij} (y)  \psi (y) \,dy, %
}
where the first equality is from the definition of $p\in BMO$.  

Finally, let $n$ be large enough that $B\subset B_n(0)$.  Then, 
\[
{G}_{ij}^B f_{ij}= \bar p (x)+c_B,
\]
for a constant $c_B$ depending on $B$ (this follows from a straightforward calculation).  Since $\psi$ has mean zero, we obtain
\EQN{ 
\int p \psi\,dx = \int{G}_{ij}^B f_{ij} (y)  \psi (y) \,dy= \int ( \bar p+c_B) \psi \,dx = \int   \bar p \psi \,dx .
}

We next prove \eqref{eq.pressureformula} for all $\psi\in \mathcal H^1_a$.  By \cite[Ch.~IV, Section 1.2, Theorem 1]{Stein}, this will imply that $p=\bar p$ in $BMO$. Let $\eta$ be a mollifier and let $\psi \in \mathcal H^1_a$ be given.  Let $\psi_\e = \e^{-3}\eta(\cdot/\e)*\psi$.  Then,  $\psi_\e \to \psi$ in $\mathcal H^1$.  Furthermore, $\psi_\e$ is bounded, has mean zero and is compactly supported and locally H\"older continuous.  Thus 
\[
\int p\, \psi_\e \,dx = \int \bar p\, \psi_\e \,dx.
\]
Because $f_{ij}\in L^\I\cap C^\al_{\loc}$, it is easy to see that $\bar p\in L^\I_\loc$ (just use H\"older continuity to deplete the singular part  uniformly on a given compact set and note that the far-field part of the integral is bounded by a constant depending on the given compact set and the bound for $f_{ij}$).     Let $n\in \mathbb N$ be such that $\supp \psi ,\supp \psi_\e\subset B_n(0)$
 for $\e<1$.  Then, $p-\bar p \chi_{B_{n}(0)}\in BMO$.  Thus
\EQN{
\bigg|\int (p-\bar p) \psi \,dx \bigg|&=\bigg|\int (p-\bar p) (\psi - \psi_\e)\,dx\bigg|
\\&\leq \|p-\bar p \chi_{B_{n}(0)}\|_{BMO}\| \psi-\psi_\e\|_{\mathcal H^1},
}
where we note that, while the above estimates are not true for general $\mathcal H^1$-$BMO$ pairings,  they do hold when $\psi,\psi_\e\in \mathcal H^1_a$  (see \cite[p.142]{Stein}), which is the case.
Since the right hand side of the above vanishes as $\e\to 0$, we conclude that 
\[
\int (p-\bar p) \psi \,dx=0,
\]
for all $\psi\in \mathcal H^1_a$.
Since $\mathcal H^1_a$ is dense in $\mathcal H^1$, this implies $p=\bar p$ in $BMO$. 
 This finishes the proof of Theorem \ref{thrm.pressure}.
\end{proof}

We now return to the proof of Lemma \ref{lemma.forDCT}.

\begin{proof}[Proof of Lemma \ref{lemma.forDCT}] 
\emph{1.}  We will show that for all $N>0$, the function 
\[
F_1^N(x)= f_{ij}(x) \int_{N^{-1}< |x-y|}  (K_{ij} (x-y) - K_{ij}^{2R}(x-x_0)) \psi (y)\,dy
\]
is dominated by an integrable function. Let $\chi_1(x) = \chi_{B_{2R}(x_0)}(x)$ and $\chi_2(x)=1-\chi_1(x)$. Note that, if $x\in B_{2R}(x_0)$, then $K_{ij}^{2R}(x-x_0)=0$. Hence we have
\EQN{
F_1^N(x)&=(\chi_1+\chi_2)(x) f_{ij}(x) \int_{N^{-1}< |x-y|}  \bigg\{ \cdots \bigg\}\,dy =: I_1+I_2,
\\
I_1(x) &=\chi_1(x) f_{ij}(x) \int_{N^{-1}< |x-y|}   K_{ij} (x-y)  \psi (y)\,dy ,
\\
I_2(x) &=\chi_2(x) f_{ij}(x) \int_{N^{-1}< |x-y|}   (K_{ij} (x-y) - K_{ij}^{2R}(x-x_0)) \psi (y)\,dy .
}

We first consider $I_1$. For all $x\in B_{2R}(x_0)$ we have 
\EQN{
&\bigg| \int_{N^{-1}< |x-y|}  K_{ij} (x-y)   \psi (y)\,dy \bigg|
= \bigg| \int_{N^{-1}< |x-y|<3R}  K_{ij} (x-y)   \psi (y)\,dy \bigg|
\\
&= \bigg| \int_{N^{-1}< |x-y|<3R}  K_{ij} (x-y)   [\psi (y)-\psi (x)]\,dy \bigg|
\\&\leq C\|\psi \|_{C^\al(B_{3R}(x_0))} \int_{B_{3R}(x)} \frac 1 {|x-y|^{3-\al}}\,dy\leq C {R^\al}\|\psi \|_{C^\al(B_{3R}(x_0))},
}
where we used the fact that the integral of $K_{ij}$ over any sphere centered at the origin is zero, and the assumption $\psi\in C^\al_\loc(\R^3)$. Thus
\[
|I_1(x)| \le  C {R^\al}\norm{f}_{L^\infty} \|\psi \|_{C^\al(B_{3R}(x_0))} \cdot\chi_1(x).
\]

For $I_2$, if $x\notin B_{2R}(x_0)$ and $y\in B_{R}(x_0)$, then 
\EQ{ \label{ineq.extradecay}
|K_{ij} (x-y) - K_{ij}^{2R}(x-x_0)|\leq  \frac {{CR}}  {|x_0-x|^4},
}
which can be proved directly for $2R<|x-x_0|<4R$ and by mean value theorem for $|x-x_0|\ge 4R$.
Thus,
\[
|I_2(x)| \le  \norm{f}_{L^\infty}\chi_2(x) \int_{B_R(x_0)} \frac  {CR|\psi(y)|}  {|x_0-x|^4}\,dy
\le C R^4\norm{f}_{L^\infty} \|\psi \|_{L^\infty} \frac{\chi_2(x)}{|x_0-x|^4}.
\]
We conclude
\EQN{
|F_1^N(x)|&\le |I_1|+|I_2| \le F_1(x),
\\
F_1(x)&=
CR^\al\norm{f}_{L^\infty} \|\psi \|_{C^\al(B_{3R}(x_0))}\cdot\chi_1(x)
+ C R^4\norm{f}_{L^\infty} \|\psi \|_{L^\infty} \frac{\chi_2(x)}{|x_0-x|^4}.
}
Since $F_1$ is clearly integrable and does not depend on $N$, we are done with the first part.

\medskip 

\emph{2.} For any $M\in [2R,\I)$ and $N\in (0,\I]$ ($N=\I$ is allowed),  we will show that, as a function of $y$,
\[
 F_2^{N,M}(y)= \psi(y)  \int_{N^{-1}<|x-y|;|x-x_0|<M}    \big(K_{ij}(x-y)-K_{ij}^{2R}(x-x_0)\big)f_{ij}(x)\,dx 
\]
is dominated by an integrable function. Decompose its integration domain to 2 regions: $|x-x_0|<2R$ in region 1 and $2R\le|x-x_0|\le M$ in region 2. Since $\psi(y) K_{ij}^{2R}(x-x_0)$ vanishes in region 1, we have
\EQN{
F_2^{N,M}(y)&=I_1+I_2,
\\
I_1(y)&= \psi(y)  \int_{N^{-1}<|x-y|;|x-x_0|<2R}    \chi_{B_{2R}(x_0)}(x) K_{ij}(x-y)f_{ij}(x)\,dx ,
\\
I_2(y)&=\psi(y) \int_{N^{-1}<|x-y|;2R<|x-x_0|<M} \big(K_{ij}(x-y)-K_{ij}^{2R}(x-x_0)\big)f_{ij}(x)\,dx .
}
For $I_2$, by using  \eqref{ineq.extradecay}
\[
|I_2(y)|\le C  |\psi (y)|\|f_{ij}\|_{L^\I}  \int_{N^{-1}<|x-y|;2R<|x-x_0|<M} \frac R{|x_0-x|^4}\,dx  \le C \|f_{ij}\|_{L^\I}   |\psi (y)|,
\]
with $C$ independent of $N$ and $M$.
For $I_1$, note that
\EQN{
|I_1(y)| &\leq \bigg| \psi(y) \int_{N^{-1}<|x-y|< R } K_{ij}(x-y) \chi_{B_{2R}(x_0)}(x)  f_{ij}(x)\,dx  \bigg|
\\&\qquad+ \bigg| \psi(y) \int_{N^{-1}<|x-y| ; |x-y|\geq R } K_{ij}(x-y) \chi_{B_{2R}(x_0)}(x)  f_{ij}(x)\,dx  \bigg| =: I_{1a}+I_{1b}.
}
When $y\in B_R(x_0)$ and $|x-y|<R$, then $|x-x_0|<2R$ and so $\chi_{B_{2R}(x_0)}(x)=1$. Thus 
\[
I_{1a} = \bigg| \psi(y) \int_{N^{-1}<|x-y|< R } K_{ij}(x-y)  ( f_{ij}(x)  - f_{ij}(y)  )\,dx  \bigg|.
\]
Since $f_{ij}\in C^\al_{\loc} $, we have
\[
I_{1a}\leq |\psi(y)|\|f_{ij}\|_{C^\al(B_{2R}(x_0))}  \int_{B_{R}(y)} \frac 1 {|x-y|^{3-\al}}\,dx
=CR^\al |\psi(y)|\|f_{ij}\|_{C^\al(B_{2R}(x_0))}.
\]
On the other hand, if $y\in B_R(x_0)$ and $x\in B_{2R}(x_0)$, then $|x-y|\leq 3R$.  Hence,
\EQN{
I_{1b}\leq C |\psi(y)|   \|f_{ij}\|_{L^\I}  \int_{B_{3R}(y)\setminus B_{R}(y)} \frac 1 {|x-y|^{3 }}\,dx \leq C|\psi(y)| \|f_{ij}\|_{L^\I}.
}
Combining these estimates
\EQN{
|F_2^{N,M}(y)| &\le I_{1a}+I_{1b}+|I_2| \le F_2(y),
\\
F_2(y)&=C   (  \|f\|_{L^\I}  
+   \|f\|_{C^\al(B_{2R}(x_0))}  )\, |\psi(y)|,
}
and $F_2(y)$ is clearly integrable.  
 We have shown the correct bound for $N<\I$ and $M<\I$. 

We still need to treat the case when $N=\I$. Note that $F_2^{N,M}(y) \to F_2^{\I,M}(y)$ everywhere because, for any $y\in \R^3$ and $1\ll N_1<N_2$,
\EQN{
&|F^{N_1,M}_2 - F^{N_2,M}_2|(y)
\\&\leq \bigg| \psi(y) \int_{N_2^{-1}<|x-y|<N_1^{-1}} K_{ij}(x-y)( (f_{ij})(x)-(f_{ij})(y)) \,dx\bigg|
\\&\leq \|\psi\|_{L^\I}   \|f_{ij}\|_{C^\al(B_1(y))}   \int_{N_2^{-1}<|x-y|<N_1^{-1}} \frac 1 {|x-y|^{3-\al}} \,dx.
}
The above can be made arbitrarily small by taking $N_1$ sufficiently large.  Since this is Cauchy, $F_2^{N,M}(y)$ converges  to $F_2^{\I,M}(y)$.  We know that $|F_2^{N,M}(y)|\leq F_2(y)$ for   every $y$ and since $F_2^{N,M}\to F_2^{\I,M}$ everywhere, it follows that $|F_2^{\I,M}|\leq F_2$ everywhere as well.
\end{proof}

The following lemma  ensures that, for the functions considered in Theorem \ref{thrm.pressure}, the LPE and DLPE are equivalent.

\begin{lemma}\label{lemma.adjoint}
Let $f$ be as in Theorem \ref{thrm.pressure}.  Let $\th\in C_c^\I$ be given. Then
\EQN{  
\int_{\R^3}  f_{ij }\th \, R_iR_j  \psi  \,dx = \int_{\R^3}  R_i R_j(f_{ij}\th) \,\psi\,dx ,
}
for all $\psi \in C_c^\I$.
\end{lemma}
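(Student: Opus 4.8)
\emph{Proof proposal.} The plan is to reduce the identity to the classical $L^2$ theory of the Riesz transforms, bypassing the $BMO$--$\mathcal H^1$ machinery used for Theorem \ref{thrm.pressure}. The point is that, although $f_{ij}$ is merely bounded and in general non-decaying, the truncated coefficient $g:=f_{ij}\th$ is compactly supported, bounded, and locally H\"older continuous; hence $g\in L^p(\R^3)$ for every $p\in[1,\I]$, in particular $g\in L^2(\R^3)$. Consequently $R_iR_j(f_{ij}\th)$ is a genuine function in $L^2(\R^3)$ by Calder\'on--Zygmund theory, and it is exactly the object appearing on the right-hand side of the claimed identity. On the other side, $\psi\in C_c^\I\subset\mathcal S\subset L^2(\R^3)$, so $R_iR_j\psi$ is well defined; it lies in $L^2(\R^3)$ because the Fourier multiplier $-\xi_i\xi_j|\xi|^{-2}$ is bounded, and it is moreover a bounded function since $\widehat{R_iR_j\psi}=-\xi_i\xi_j|\xi|^{-2}\widehat\psi(\xi)$ is integrable.

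First I would verify that both integrals converge absolutely: the left-hand integrand $f_{ij}\th\,R_iR_j\psi$ is bounded and supported in $\supp\th$, hence integrable; the right-hand integrand $R_iR_j(f_{ij}\th)\,\psi$ is a product of two $L^2$ functions, hence integrable by Cauchy--Schwarz. Then I would invoke the self-adjointness of $R_iR_j$ on $L^2(\R^3)$: each Riesz transform satisfies $R_k^*=-R_k$, so $(R_iR_j)^*=R_j^*R_i^*=R_jR_i=R_iR_j$, whence for real-valued $\phi,h\in L^2(\R^3)$ one has $\int (R_iR_j\phi)\,h\,dx=\int \phi\,(R_iR_j h)\,dx$ (equivalently, by Plancherel, using that the multiplier $-\xi_i\xi_j|\xi|^{-2}$ is real and even). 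Applying this with $\phi=f_{ij}\th$ and $h=\psi$, and summing over $i,j$, yields
\[
\int_{\R^3} f_{ij}\th\,R_iR_j\psi\,dx=\int_{\R^3} R_iR_j(f_{ij}\th)\,\psi\,dx,
\]
which is the assertion.

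There is essentially no hard step: the only point requiring attention is the observation in the first paragraph that multiplying $f_{ij}$ by $\th$ produces a compactly supported function, so that no regularization of the Riesz transform at spatial infinity is needed and the plain $L^2$ (or $L^q$, $q<\I$) theory applies. One could instead reprove the identity by expanding the kernel of $G_{ij}$ and applying Fubini as in the proof of Theorem \ref{thrm.pressure}, but that route is awkward because $\psi$ is not assumed to have mean zero --- hence need not lie in $\mathcal H^1_a$ --- and the $L^2$ argument sidesteps this issue entirely. The remaining work is just the bookkeeping of absolute convergence indicated above.
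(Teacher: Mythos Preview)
Your proposal is correct and takes essentially the same approach as the paper: the paper's proof is the one-line observation that $f_{ij}\th\in L^2$ together with the skew-adjointness of the Riesz transforms on $L^2$. Your version supplies more detail (absolute convergence, the multiplier description, and the computation $(R_iR_j)^*=R_iR_j$), but the underlying idea is identical.
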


\begin{proof}
Clearly $f_{ij}\th \in L^2$. The result then follows from the skew adjointness of Riesz transforms in $L^2$.
\end{proof}

\section{The distributional  local pressure expansion}\label{sec.presure.loc}

We first show that Definition \ref{def.localPressure2} is meaningful. 
Fix a function $\Theta\in C_c^\I$ so that $\Theta = 1$ on $B_2(0)$ and $\supp \Theta \subset B_4(0)$. Let $\th_R(x) = \Theta( x/ (R+1))$ so that $\th_R =1$ on $B_{2R+2}(0)$ and $\supp \th_R \subset B_{4R+4}(0)$.

\begin{lemma}\label{lemma.pressure.distribution}
Let
$u\in L^2_\uloc(0,T) $ for some $T>0$.  Fix $R>0$ and $x_0\in \R^3$ and let $B=B_R(x_0)$.
Consider the mapping for $\psi\in {\mathcal D}(B\times (0,T))$, 
\EQN{ 
\psi \mapsto &\int_0^T \int u_i (x,t)u_j(x,t) \th_R(x_0-x) R_iR_j    \psi  (x,t)\,dx\,dt
\\&+ \int_0^T\int \int (K_{ij}(x-y) - K_{ij}(x_0-y)) (1-\th_R (x_0-y))( u_iu_j)(y,t) \,dy\,   \psi(x,t) \,dx\,dt
\\&=: \langle   \bar p_\near^B,\psi\rangle +\langle   \bar p_\far^B,\psi \rangle.
}
Then $ \bar p^B := \bar p_\near^B + \bar p_\far^B \in \mathcal D'(B\times (0,T))$. 
\end{lemma}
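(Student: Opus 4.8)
The plan is to handle $\bar p_\near^B$ and $\bar p_\far^B$ separately and, for each, to (i) verify that the defining integral converges absolutely for every $\psi\in\mathcal D(B\times(0,T))$ and (ii) bound $|\langle\,\cdot\,,\psi\rangle|$ by finitely many $C^k$--seminorms of $\psi$. Since the domain $B\times(0,T)$ is fixed once $R$ and $x_0$ are chosen, (i) and (ii) together give continuity on $\mathcal D(B\times(0,T))$ and hence $\bar p^B=\bar p_\near^B+\bar p_\far^B\in\mathcal D'(B\times(0,T))$. The one quantitative input used throughout is that $u\in L^2_\uloc(0,T)$ forces $u_iu_j\in L^1_\uloc(0,T)$, with the scale--explicit bound $\int_0^T\!\int_{B_\rho(z)}|u_iu_j|\,dx\,dt\lec (1+\rho)^3\norm{u}_{L^2_\uloc(0,T)}^2$, obtained by covering $B_\rho(z)$ by $O((1+\rho)^3)$ unit balls and using $|u_iu_j|\le|u|^2$.

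For the near piece, the factor $\th_R(x_0-x)$ confines the spatial integral to $x\in B_{4R+4}(x_0)$, so only the size of $R_iR_j\psi$ on that fixed ball is relevant. For each fixed $t$ we have $\psi(\cdot,t)\in C_c^\I(\R^3)$, so $R_iR_j\psi(\cdot,t)=-\tfrac13\de_{ij}\psi(\cdot,t)+\mathrm{p.v.}\!\int K_{ij}(\cdot-y)\psi(y,t)\,dy$ is well defined (the classical Riesz transform on $C_c^\I$) and bounded, with $\norm{R_iR_j\psi(\cdot,t)}_{L^\I}\lec_{R}\norm{\psi(\cdot,t)}_{L^\I}+\norm{\nb\psi(\cdot,t)}_{L^\I}$; this is the elementary estimate one gets by subtracting $\psi(x,t)$ inside the principal value near the singularity (so the mean--zero property of $K_{ij}$ on spheres kills the leading term and H\"older/Lipschitz continuity of $\psi$ depletes the rest) and bounding $K_{ij}$ crudely away from the singularity --- precisely the computation already performed in the proof of Lemma \ref{lemma.forDCT}. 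Consequently
\EQN{
|\langle\bar p_\near^B,\psi\rangle|
&\le \Big(\sup_{0<t<T}\norm{R_iR_j\psi(\cdot,t)}_{L^\I}\Big)\int_0^T\!\!\int_{B_{4R+4}(x_0)}|u_iu_j|\,dx\,dt
\\&\lec_{R,x_0}\norm{u}_{L^2_\uloc(0,T)}^2\big(\norm{\psi}_{L^\I}+\norm{\nb\psi}_{L^\I}\big),
}
and in particular the integral defining $\langle\bar p_\near^B,\psi\rangle$ converges absolutely.

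For the far piece, since $\psi$ is supported in $B=B_R(x_0)$ and $1-\th_R(x_0-y)$ vanishes for $|x_0-y|<2R+2$, every pair $(x,y)$ entering the integral has $|x-x_0|<R$ and $|x_0-y|\ge 2R+2$; in particular $|x-y|\ge R+2$, so there is no singularity, and the mean value theorem together with $|\nb K_{ij}(z)|\lec|z|^{-4}$ yields the extra decay $|K_{ij}(x-y)-K_{ij}(x_0-y)|\lec R\,|x_0-y|^{-4}$, exactly as in \eqref{ineq.extradecay}. Bounding $|\psi|\le\norm{\psi}_{L^\I}$, carrying out the (now trivial) $x$--integration over $B_R(x_0)$, and decomposing the remaining $y$--region into dyadic annuli $A_k=\{\,2^k(2R+2)\le|x_0-y|<2^{k+1}(2R+2)\,\}$, the contribution of $A_k$ is controlled by $R\,(2^kR)^{-4}(1+2^kR)^3\norm{u}_{L^2_\uloc(0,T)}^2$ --- using the scale--explicit $L^2_\uloc$ bound on the ball containing $A_k$ --- and this is summable in $k\ge 0$ for any fixed $R>0$. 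The same computation with $\norm{\psi}_{L^\I}$ replaced by $1$ shows, via Tonelli, that the inner $y$--integral converges for a.e.\ $(x,t)\in B_R(x_0)\times(0,T)$, so $\bar p_\far^B$ is genuinely defined; moreover $|\langle\bar p_\far^B,\psi\rangle|\lec_{R,x_0}\norm{u}_{L^2_\uloc(0,T)}^2\,\norm{\psi}_{L^\I}$.

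Adding the two bounds, $\bar p^B$ is a linear functional on $\mathcal D(B\times(0,T))$ satisfying $|\langle\bar p^B,\psi\rangle|\lec_{R,x_0}\norm{u}_{L^2_\uloc(0,T)}^2(\norm{\psi}_{L^\I}+\norm{\nb\psi}_{L^\I})$, hence a distribution (of order at most one) on $B\times(0,T)$, which is the claim. The only step that is more than a soft manipulation is the far--field estimate: one must check that the dyadic sum converges, i.e.\ that the $O((1+2^kR)^3)$ growth in the number of unit balls needed to cover $A_k$ is beaten by the $|x_0-y|^{-4}$ decay of the kernel difference --- but this is routine bookkeeping, and the near piece and the final assembly are entirely soft.
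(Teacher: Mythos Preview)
Your proof is correct and follows essentially the same approach as the paper's: both bound $R_iR_j\psi$ in $L^\infty$ by depleting the singularity of $K_{ij}$ via the Lipschitz regularity of $\psi$ and controlling the tail using the compact support of $\psi$ (the paper uses $\norm{\psi}_{L^p}$ for the tail, you use $\norm{\psi}_{L^\infty}$ together with $\supp\psi\subset B_R(x_0)$---both work), and both treat the far piece by the extra decay \eqref{ineq.extradecay} and a dyadic sum over $L^2_\uloc$, which the paper merely cites to \cite{KiSe}. Your use of the seminorm criterion (bounding $|\langle\bar p^B,\psi\rangle|$ by $C^1$ seminorms of $\psi$) packages finiteness, linearity, and sequential continuity in one step, whereas the paper verifies these separately, but this is a cosmetic difference rather than a different argument.
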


\begin{proof}
We first check that the mapping is {finitely} valued.  The second term is fine by known estimates for the local pressure expansion (e.g. \cite{KiSe}). For the first, since
$
R_iR_j \psi = -\frac13\de_{ij}\psi + p.v. \int K_{ij}(\cdot -y)\psi(y)\,dy
$
(see \eqref{Gij.def})
and the kernel $K_{ij}$
 has mean zero on spheres, we have
\EQN{ \label{ineq.riesz.bounded}
|R_iR_j { \psi}(x) |&=  \bigg|-\frac13\de_{ij}\psi + p.v.\int K_{ij}(x-y)   \psi(y)\,dy   \bigg|
\\&\leq  {|\psi(x)|} + \bigg| p.v.\int_{|x-y|<1} K_{ij}(x-y) (  \psi(y)  -  \psi(x ))\,dy   \bigg| 
\\&\qquad + \bigg|  \int_{|x-y|\geq 1} K_{ij}(x-y)    \psi(y)\,dy   \bigg|
\\&\leq {\| \psi\|_{L^\infty}}+ \int_{|x-y|<1} \frac  {C \| \nb  \psi \|_{L^\I}} {|x-y|^2}dy   + \bigg( \int_{|x-y|\geq 1} \frac 1 {|x-y|^{3p'}} \bigg)^{1/p'} \| \psi \|_{L^p}
\\&\leq C(   \|\psi\|_{W^{1,\I}} + \| \psi\|_{L^p}  )
}
where   $p$ and $p'$ are H\"older conjugates and $p>1$. Since $\psi\in \mathcal D$ this implies $R_iR_j(  \psi)\in L^\I$. Hence
\EQN{
&\bigg|\int_0^T \int u_i (x)u_j(x) \th_R(x_0-x) R_iR_j (   \psi) (x,t)\,dx\,dt \bigg|
\\& \leq 
 CR^3 (   \|\psi\|_{L^\I W^{1,\I}} + \| \psi\|_{L^\I L^p}  ) \|u\|_{L^2_\uloc(0,T)}^2<\I.
}

We now check that the map gives a distribution.  Fix $\la\in\R$ and $\psi_1,\psi_2\in \mathcal D(B\times (0,T))$. Then, by linearity of Riesz transforms and the Lebesgue integral,
\EQN{
\langle \bar p_{\near}^B , \la \psi_1+\psi_2\rangle &= \int_0^T \int u_i (x,t)u_j(x,t) \th_R(x_0-x) R_iR_j  (\la \psi_1+\psi_2)  (x,t)\,dx\,dt
\\&=\la  \int_0^T \int u_i (x,t)u_j(x,t) \th_R(x_0-x) R_iR_j \psi_1 (x,t)\,dx\,dt
\\&+  \int_0^T \int u_i (x,t)u_j(x,t) \th_R(x_0-x) R_iR_j \psi_2 (x,t)\,dx\,dt
\\&= \la \langle \bar p_{\near}^B , \psi_1 \rangle  +\langle \bar p_{\near}^B , \psi_2\rangle.
}
For the same reason,
\EQN{
&\langle \bar p_{\far}^B , \la \psi_1+\psi_2\rangle 
\\&=\int_0^T\int \int (K_{ij}(x-y) - K_{ij}(x_0-y)) (1-\th_R (x_0-y))( u_iu_j)(y,t) \,dy\,  (\la \psi_1+\psi_2)(x,t) \,dx\,dt
\\&=\la \langle \bar p_{\far}^B ,  \psi_1 \rangle +\langle \bar p_{\far}^B ,  \psi_2\rangle.
}
Therefore the map is linear.

Let $\{\psi_k \}_{k=1}^\I \subset C_c^\I (K)$ which converges to zero in the topology of $\mathcal D$ where $K$ is a compact subset of 
$B_R(x_0)\times (0,T)$.
Convergence of $\psi_k$ in the topology on $\mathcal D$ implies $ \|\psi_k\|_{L^\I W^{1,\I}} + \| \psi_k\|_{L^\I L^p}  \to 0 $ as $k\to 0$. Hence
\EQN{
&\bigg|\int_0^T \int u_i (x)u_j(x) \th_{R}(x_0-x) R_iR_j (   \psi_k) (x)\,dx\,dt\bigg|\\& \leq  CR^3 (   \|\psi_k\|_{L^\I W^{1,\I}} + \| \psi_k\|_{L^\I L^p}  ) \|u \|_{L^2_\uloc(0,T)}^2 \to 0.
}
That the remaining term vanishes follows from standard estimates (e.g. \cite{KiSe,KwTs}).
\end{proof}

Lemma \ref{lemma.pressure.distribution} only defines the pressure in $\mathcal D'(B_R(x_0) \times (0,T))$. We need to define a distribution in $\mathcal D'(\R^3\times (0,T))$. To do this, we build upon Lemma \ref{lemma.pressure.distribution} using the following recursive procedure:  
\EQ{\label{def.pressure.dist}
\begin{cases}\langle \bar p (x),\psi\rangle:= \langle \bar p^{B_1(0)} , \psi\rangle  & \text{ if }\psi \in \mathcal D( B_1(0)\times (0,T)),
\\ \langle \bar p (x),\psi\rangle:= \langle \bar p^{B_n(0)} , \psi\rangle + \langle \sum_{k=2}^n c_k , \psi\rangle  & \text{ if }n\geq 2\text{ and }\psi \in \mathcal D( B_{n}(0)\times (0,T)) ,
\end{cases}
}
where
\[
c_{k}{(t)}=-\int K_{ij}(x-y)  (\th_{k} (-y)-\th_{k-1} (-y))( u_iu_j)(y,t) \,dy,
\]
which, after unraveling notation, is the same constant appearing in Theorem \ref{thrm.pressure}.

We need to check that \eqref{def.pressure.dist} defines a distribution.
\begin{lemma}\label{lemma.pressure.distribution2}Assume $u\in L^2_\uloc(0,T)$ for some $T>0$. Let $\bar p$ be defined by \eqref{def.pressure.dist}. Then $\bar p \in \mathcal D'(\R^3\times (0,T))$.
\end{lemma}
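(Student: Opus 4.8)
The plan is to verify that the recursive formula \eqref{def.pressure.dist} genuinely defines a single linear functional on $\mathcal D(\R^3\times(0,T))$ that is continuous in the $\mathcal D$-topology. Since every $\psi\in\mathcal D(\R^3\times(0,T))$ has compact spatial support, it is supported in $B_n(0)\times(0,T)$ for some $n$, so the formula assigns a value to each test function. First I would check \textbf{consistency}: if $\psi\in\mathcal D(B_n(0)\times(0,T))\subset\mathcal D(B_m(0)\times(0,T))$ for $m>n$, the two prescriptions must agree. This reduces, exactly as in the ``well-defined'' part of the proof of Theorem \ref{thrm.pressure}, to showing
\[
\langle \bar p^{B_{n+1}(0)},\psi\rangle + \langle c_{n+1},\psi\rangle = \langle \bar p^{B_n(0)},\psi\rangle
\]
for $\psi$ supported in $B_n(0)\times(0,T)$, and then iterating. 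Unraveling the definitions of $\bar p^{B_n(0)}_\near$, $\bar p^{B_n(0)}_\far$ and $c_{n+1}$, the difference $\bar p^{B_n(0)}-\bar p^{B_{n+1}(0)}-c_{n+1}$ is, at each fixed $t$, an expression involving $K_{ij}(x-y)$, $K_{ij}(x_0-y)$ with $x_0=0$, the cutoffs $\th_n,\th_{n+1}$, and integration of $u_iu_j(\cdot,t)$; using that $R_iR_j$ and the near/far splitting are built from the same kernel $K_{ij}$, and that for $x\in B_n(0)$ the cutoff $\th_n(x)$ can be inserted or removed freely inside $R_iR_j\psi$ against $u_iu_j\th_n$, one shows the difference is spatially constant in $x$ (a function of $t$ only), which is precisely $c_{n+1}(t)$. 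Here I would lean on the Fubini/dominated-convergence bookkeeping already established for Theorem \ref{thrm.pressure} and Lemma \ref{lemma.forDCT}, applied at a.e.\ fixed time and then integrated in $t$; the $L^2_\uloc(0,T)$ hypothesis guarantees $u_iu_j(\cdot,t)\in L^1_\uloc$ for a.e.\ $t$ with the relevant time-integrability.

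Next, \textbf{linearity} is immediate: for $\psi_1,\psi_2$ one chooses $n$ so large that both are supported in $B_n(0)\times(0,T)$, and then linearity follows from the linearity of $\bar p^{B_n(0)}$ (proved in Lemma \ref{lemma.pressure.distribution}), the linearity of the constant $\psi\mapsto\langle\sum_{k=2}^n c_k,\psi\rangle$ (an honest integral against a fixed $L^1(0,T)$ function of $t$), and the consistency just established (so that the choice of $n$ does not matter). For \textbf{continuity}, let $\psi_\ell\to 0$ in $\mathcal D(\R^3\times(0,T))$; all $\psi_\ell$ are supported in a common compact set $K\subset B_n(0)\times(0,T)$ for some fixed $n$. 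Then $\langle\bar p^{B_n(0)},\psi_\ell\rangle\to 0$ by the continuity proved in Lemma \ref{lemma.pressure.distribution}, while $\bigl|\langle\sum_{k=2}^n c_k,\psi_\ell\rangle\bigr|\le \|\sum_{k=2}^n c_k\|_{L^1(0,T)}\,\|\psi_\ell\|_{L^\infty}\to 0$, once I note that each $c_k\in L^1(0,T)$: indeed $|c_k(t)|\lesssim \int_{k-1\le|y|\le 4k+4}|y|^{-3}|u_iu_j|(y,t)\,dy\lesssim \log(k)\,\|u(t)\|_{L^2_\uloc}^2$ by the same estimate used in Comment 5 following Theorem \ref{thrm.pressure}, and $\|u(t)\|_{L^2_\uloc}^2\in L^1(0,T)$ since $u\in L^2_\uloc(0,T)$. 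Hence $\langle\bar p,\psi_\ell\rangle\to 0$.

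The main obstacle is the consistency step: one must be careful that the two ways of writing the ``near'' part — $R_iR_j\psi$ tested against $u_iu_j\th_n$ versus against $u_iu_j\th_{n+1}$ — differ by exactly a term that, together with the corresponding discrepancy in the ``far'' part, collapses to the time-dependent constant $c_{n+1}(t)$ and nothing else. This is the distributional, time-dependent analogue of the pointwise identity ``$G_{ij}^{B}f_{ij}=\bar p+c_B$'' from Theorem \ref{thrm.pressure}, and the cleanest route is to fix $t$, invoke Lemma \ref{lemma.adjoint} (to move $R_iR_j$ off the test function onto $u_iu_j\th_k$, legitimate since $u_iu_j(\cdot,t)\th_k\in L^1\cap L^{?}$ need not hold — so more carefully, one should approximate $u(\cdot,t)$ by bounded fields as anticipated in Section \ref{sec.approx}, or simply argue directly with the explicit kernels as in the proof of Theorem \ref{thrm.pressure}), reduce to the pointwise constancy statement, and then integrate against $\psi(\cdot,t)$ and in $t$. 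Everything else is routine and amounts to quoting Lemma \ref{lemma.pressure.distribution} and the $\log k$ growth bound.
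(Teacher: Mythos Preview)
Your outline matches the paper's proof: consistency, linearity, continuity, with linearity and continuity handled exactly as you describe (fix a common $n$, quote Lemma \ref{lemma.pressure.distribution}, and use $\sum_{k=2}^n c_k\in L^1(0,T)$).

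For the consistency step, however, you are overcomplicating things. The paper does \emph{not} invoke Lemma \ref{lemma.forDCT}, Lemma \ref{lemma.adjoint}, or any approximation by bounded fields --- and indeed those lemmas require $f_{ij}\in L^\infty\cap C^\al_\loc$, which fails here. The clean observation is this: when you compute $\langle \bar p^n-\bar p^{n+1},\psi\rangle$ and split into near and far parts, the far part produces the term
\[
\int_0^T\!\!\int\int K_{ij}(x-y)(\th_{n+1}(-y)-\th_n(-y))(u_iu_j)(y,t)\,dy\,\psi(x,t)\,dx\,dt.
\]
Since $\th_{n+1}-\th_n$ vanishes on $B_{2n}(0)$ while $\psi$ is supported in $B_n(0)$, the inner integral runs over $|x-y|\ge n$, so it is \emph{not} a principal-value integral at all --- just an ordinary convolution with a bounded kernel. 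Fubini then applies directly and moves $K_{ij}$ onto $\psi$, producing exactly $\int (\th_{n+1}-\th_n)(-x)(u_iu_j)R_iR_j\psi$, which cancels the near-field discrepancy. What remains is $\int c_{n+1}(t)\psi\,dx\,dt$. No regularity of $u$, no mollification, no Theorem \ref{thrm.pressure} machinery is needed.

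One minor correction: your claim that $\|u(t)\|_{L^2_\uloc}^2\in L^1(0,T)$ does not follow from $u\in L^2_\uloc(0,T)$ (the sup over $x_0$ and the time integral need not commute). Fortunately you don't need it: $c_k(t)$ is an integral over a \emph{fixed} annulus $\{2k\lesssim|y|\lesssim 4k\}$, so $\int_0^T|c_k(t)|\,dt$ is bounded directly by $k^{-3}$ times a finite sum of $\|u\|_{L^2_\uloc(0,T)}^2$ over the $O(k^3)$ unit balls covering that annulus, giving $c_k\in L^1(0,T)$ with a bound uniform in $k$ (not $\log k$; the logarithm appears only in the cumulative $\tilde c_n$).
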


\begin{proof}
We abbreviate $\bar p^{B_n(0)}$ by $\bar p^n$ and do the same for the components of $\bar p^n$.  Note that $c_n \in L^1 (0,T)$, which follows from the fact that $u\in L^2_\uloc(0,T)$.

We first  check the map is well defined. Fix $\psi\in \mathcal D'(\R^3\times (0,T))$. Let $n$ be a natural number so that $ \psi\in \mathcal D'(B_n(0)\times (0,T))$. To prove the map is well defined, it suffices to show 
\[
\langle  \bar p^n ,\psi  \rangle+ \langle \sum_{k=2}^n c_k,\psi\rangle = \langle \bar p^{n+1} ,\psi \rangle +\langle \sum_{k=2}^{n+1} c_k,\psi\rangle. 
\]
This reduces to 
\[
\langle \bar p^n -\bar p^{n+1},\psi \rangle =  \int_0^T\int c_{n+1} (t)\psi \,dx\,dt.
\]
We first compute the near-field part of the left hand side which is
\EQ{\label{eq.well-defined.near}
&\int_0^T \int u_i (x,t)u_j(x,t) \th_n(-x) R_iR_j    \psi  (x,t)\,dx\,dt 
\\&- \int_0^T \int u_i (x,t)u_j(x,t) \th_{n+1}(-x) R_iR_j    \psi  (x,t)\,dx\,dt
\\&=\int_0^T \int u_i (x,t)u_j(x,t) (\th_n(-x) -   \th_{n+1}(-x))R_iR_j    \psi  (x,t)  \,dx\,dt.
}
For the far-field part we have
\EQ{\label{eq.well-defined.far}
&\int_0^T\int \int (K_{ij}(x-y) - K_{ij}(-y)) (1-\th_n (-y))( u_iu_j)(y,t) \,dy\,   \psi(x,t) \,dx\,dt
\\&-\int_0^T\int \int (K_{ij}(x-y) - K_{ij}(-y)) (1-\th_{n+1} (-y))( u_iu_j)(y,t) \,dy\,   \psi(x,t) \,dx\,dt
\\&=\int_0^T\int \int (K_{ij}(x-y) - K_{ij}(-y)) (\th_{n+1} (-y)-\th_{n} (-y))( u_iu_j)(y,t) \,dy\,   \psi(x,t) \,dx\,dt
\\&=\int_0^T\int \int K_{ij}(x-y)  (\th_{n+1} (-y)-\th_{n} (-y))( u_iu_j)(y,t) \,dy\,   \psi(x,t) \,dx\,dt
\\&-\int_0^T\int \int  K_{ij}(-y) (\th_{n+1} (-y)-\th_{n} (-y))( u_iu_j)(y,t) \,dy\,   \psi(x,t) \,dx\,dt.
}
Now, fix $x\in \supp\psi\subset B_n$, and consider the integral
\EQN{
\int K_{ij}(x-y)  (\th_{n+1} (-y)-\th_{n} (-y))( u_iu_j)(y,t) \,dy.
}
Because 
$\th_{n+1}-\th_n \equiv 0$ in $B_{2n}(0)$  and $x\in B_n$, we are integrating over the region where $|x-y|\geq n$. This is important because it means that above integral is not principally valued, it is just a convolution operator. Therefore, 
\EQN{
&\int K_{ij}(x-y)  (\th_{n+1} (-y)-\th_{n} (-y))( u_iu_j)(y,t) \,dy  
\\&= p.v. \int K_{ij}(x-y)  (\th_{n+1} (-y)-\th_{n} (-y))( u_iu_j)(y,t) \,dy 
\\&= R_iR_j ( (\th_{n+1}(-\cdot ) - \th_n(-\cdot ) )  u_iu_j ) - \frac 1 3 \delta_{ij}  (\th_{n+1} (-x)- \th_n(-x ))  u_iu_j  (x,t)
\\& =R_iR_j ( (\th_{n+1}(-\cdot ) - \th_n (-\cdot ))  u_iu_j )  ,
} 
where we again used  $\th_{n+1}-\th_n \equiv 0$ in $B_{2n}(0)$ in the last step.
Using this and Fubini's Theorem to move the Riesz transforms (noting that they reduce to non-principally valued convolution operators as observed above), we obtain
\EQN{ 
&\int_0^T\int \int K_{ij}(x-y)  (\th_{n+1} (-y)-\th_{n} (-y))( u_iu_j)(y,t) \,dy\,   \psi(x,t) \,dx\,dt
\\&=\int_0^T\int R_iR_j ( (\th_{n+1}(-\cdot) - \th_n(-\cdot)  )  u_iu_j ) (x,t)  \psi(x,t) \,dx\,dt
\\&=\int_0^T\int (\th_{n+1} - \th_n )(-x)  (u_iu_j )(x,t) (R_iR_j \psi)(x,t) \,dx\,dt.
}
Comparing this to \eqref{eq.well-defined.near} and adding \eqref{eq.well-defined.near} and \eqref{eq.well-defined.far} gives
\EQN{
\langle \bar p^n -\bar p^{n+1},\psi \rangle &= -\int_0^T\int \int  K_{ij}(-y) (\th_{n+1} (-y)-\th_{n} (-y))( u_iu_j)(y,t) \,dy\,   \psi(x,t) \,dx\,dt
\\&=\int_0^T\int c_{n+1}(t)  \psi(x,t)\,dx\,dt,
}
which proves the map is well-defined.

We now check that the map defines a distribution.  It is easy to see that it is finitely valued and linear, so
we only prove continuity. Let $\{\psi_k\}_{k=1}^\I \subset C_c^\I(K)$ where $K$ is a compact subset of $\R^3\times (0,T)$ and $\psi_k\to 0$ in the topology on $\mathcal D(\R^3\times (0,T))$.    Then there exists $n\in \N$ so that $\psi_k\in \mathcal D( B_n(0)\times (0,T ) )$ for all $k$.  We certainly have $\psi_k\to 0$ in the topology on $\mathcal D(B_n(0)\times (0,T))$.  We also have
\[
\langle \bar p ,\psi_k\rangle = \langle \bar p^n ,\psi_k \rangle + \langle \sum_{j=2}^n c_j ,\psi_k\rangle.
\]
Because $\bar p^n$ is a distribution on $B_n(0) \times (0,T)$, 
\[| \langle \bar p^n ,\psi_k \rangle| \to 0.\] On the other hand, since 
\[
 \sum_{j=2}^n c_j \in L^1(0,T),
\]
integrating against $ \sum_{j=2}^n c_j $
defines an element of $\mathcal D'( \R^3\times (0,T) )$. Therefore
\[
|\langle \sum_{j=2}^n c_j,\psi_k\rangle|\to 0,
\]
as well.
This proves that the mapping $\psi\mapsto \langle \bar p, \psi\rangle$ is in $\mathcal D'(\R^3\times (0,T))$.
\end{proof}

\section{The mild formulation}\label{sec.mild.formulation}

In this section we elaborate on  \cite[Remark 3.1]{MaTe} and state some useful facts about the mild formulation. In particular, mild solutions are in $L^1_\uloc (0,T)$ whenever 
$e^{t\Delta}u_0\in L^1_\uloc(0,T)$ and $u\in L^2_\uloc(0,T)$.
This is enough to make sense of the mild formulation as a distribution.

Recall that $e^{t\Delta}\mathbb P  $ is the Oseen tensor. Let $S$ denote its kernel.  We have the following pointwise estimate for $S$,
\EQ{\label{ineq.oseen}
|\pd_t^m\nb_x^k S(x,t)|\le \frac{C_{k,m}}{(|x|+\sqrt t)^{3+k+2m}},}
see the original work of Oseen \cite{Oseen} and the more recent references \cite{LR,MaTe,ShSh}.  Then, given $F=(F_{ij})_{1\leq i,j\leq 3}\in L^2_\uloc(0,T)$ 
we understand the operation
\[
F(x,t)\mapsto \int_0^te^{(t-s)\Delta}\mathbb P \nb \cdot F (x,s)\,ds,
\]  
in $L^1_\uloc(0,T)$ where $e^{t\Delta} \mathbb P$ is the Oseen tensor. 
 
A similar definition is  \cite[Definition 3.1]{MaTe}. The difference between these is that we are considering the operation in space and time as opposed to at a fixed time. This allows us to apply it to elements of $L^2_\uloc(0,T)$ instead of just elements of $L^\I(0,T;L^2_\uloc)$.  The distinction does not lead to technical challenges. Indeed, we will prove the operation is meaningful in the sense of distributions using the above pointwise estimate for the Oseen tensor, which are also used in \cite{MaTe}.
This will allow us to prove that the mild formulation converges in $L^1_\loc(\R^3\times (0,T))$ and, additionally, ends up in $L^1_\uloc(0,T)$ whenever  $u\in L^2_\uloc(0,T)$ and $e^{t\Delta}u_0\in L^1_\uloc$ respectively. 
 This is the content of the next lemma.   A recurring theme is that these estimates match the far-field decay of the kernel in the LPE. We can therefore use methods resembling the pressure estimates  in \cite{KiSe}. Also note that this result is not new; it is  roughly a re-statement of \cite[Lemma 11.3]{LR}.
We include a detailed proof for the sake of completeness.

\begin{lemma}\label{lemma.mild}
If $e^{t\Delta}u_0\in L^1_\uloc(0,T)$
 and $u\in L^2_\uloc(0,T)$  then, letting
\[
\bar u = e^{t\Delta}u_0(x) -\int_0^t e^{(t-s)\Delta}\mathbb P \nb \cdot (u\otimes u)\,ds,
\]
we have $\bar u \in L^1_\uloc(0,T)$.  Consequently $\bar u \in L^1_{\loc}(\R^3\times (0,T))$ and is therefore defined in $\mathcal D'(\R^3\times (0,T))$.
\end{lemma}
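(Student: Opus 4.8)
The plan is to split the estimate of $\bar u$ into the linear part $e^{t\Delta}u_0$, which is in $L^1_\uloc(0,T)$ by hypothesis, and the nonlinear (Duhamel) part $N(u) := \int_0^t e^{(t-s)\Delta}\mathbb P\nb\cdot(u\otimes u)\,ds$, for which I would show $N(u)\in L^1_\uloc(0,T)$ using the pointwise bound \eqref{ineq.oseen} on the Oseen kernel $S$. Since $\mathbb P\nb\cdot F$ has kernel $\nb_x S$, we have, writing $F = u\otimes u$,
\EQN{
N(u)(x,t) = \int_0^t \int_{\R^3} \nb_x S(x-y,t-s) : F(y,s)\,dy\,ds,
}
and the estimate $|\nb_x S(z,\tau)|\le C(|z|+\sqrt\tau)^{-4}$ is the only structural fact needed.

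The key steps, in order. First, localize: fix $x_0\in\R^3$ and decompose the $y$-integral into the near region $|y-x_0|<2$ and the far region $|y-x_0|\ge 2$ (this mirrors the near/far split used for the LPE in \cite{KiSe}). Second, for the near part, bound $|\nb_x S(x-y,t-s)|\le C(t-s)^{-2}$ crudely and instead exploit that $F\in L^1_\uloc(0,T)$; more carefully, one uses the $L^2_\uloc$ control of $u$, so $F = u\otimes u\in L^1_\uloc(0,T)$, together with the weak-type/interpolation mapping properties of the kernel $\nb_x S(\cdot,\tau)$ (which decays like $\tau^{-3/2}\cdot(\text{integrable profile})$ after rescaling) to get $\|N(u)\|_{L^1(B_1(x_0)\times(0,T))}\lesssim T^{1/2}\|u\|_{L^2_\uloc(0,T)}^2$ with a constant uniform in $x_0$. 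Third, for the far part, use $|\nb_x S(x-y,t-s)|\le C|x-y|^{-4}\le C|x_0-y|^{-4}$ for $x\in B_1(x_0)$, $|y-x_0|\ge 2$, and sum the resulting series over dyadic annuli $\{2^k\le |y-x_0|<2^{k+1}\}$: on each annulus the $L^1$ mass of $F$ is $\lesssim 2^{3k}\|u\|_{L^2_\uloc(0,T)}^2$ while the kernel contributes $2^{-4k}$, so $\sum_k 2^{-k}<\infty$ gives a bound again uniform in $x_0$, after integrating the harmless factor in $s\in(0,T)$. Fourth, combine the two contributions to conclude $\sup_{x_0}\|N(u)\|_{L^1(B_1(x_0)\times(0,T))}<\infty$, hence $N(u)\in L^1_\uloc(0,T)$, and therefore $\bar u\in L^1_\uloc(0,T)$. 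Since $L^1_\uloc(0,T)\hookrightarrow L^1_\loc(\R^3\times(0,T))$, the distributional interpretation follows immediately.

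The main obstacle is the near-field term: the crude bound $|\nb_x S(z,\tau)|\lesssim \tau^{-2}$ is not time-integrable near $s=t$, so one cannot simply pull $F$ out in $L^\infty_s L^1_\uloc$. The fix is to keep the spatial decay of the kernel: $|\nb_x S(z,\tau)|\lesssim \tau^{-1/2}(|z|^2+\tau)^{-3/2}\wedge\tau^{-2}$, and use that $\int_{\R^3}(|z|^2+\tau)^{-3/2}\,dz$ diverges only logarithmically while the genuine decay $|z|^{-3}$-type tail lets one trade spatial and temporal integrability; more cleanly, one estimates $\|\nb_x S(\cdot,\tau)\|_{L^1(\R^3)}\lesssim \tau^{-1/2}$ (scaling), so that a Young-type convolution inequality in space followed by the integrable-in-time bound $\int_0^t (t-s)^{-1/2}\,ds \lesssim T^{1/2}$ closes the near estimate. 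This is exactly the computation done (for the pressure) in \cite{KiSe} and (for the Duhamel term) in \cite[Lemma 11.3]{LR}, so no new idea is required beyond carefully tracking the uniformity of all constants in $x_0$.
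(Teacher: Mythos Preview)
Your proposal is correct and follows essentially the same route as the paper: a near/far split at radius $2$ around $x_0$, with the far part handled by the pointwise Oseen bound $|\nabla_x S(z,\tau)|\lesssim |z|^{-4}$ and a dyadic annulus summation, and the near part handled by the $L^1$-operator bound coming from $\|\nabla_x S(\cdot,\tau)\|_{L^1}\lesssim \tau^{-1/2}$ together with Tonelli in $(s,t)$. The only cosmetic difference is that the paper invokes the ready-made $L^q_\uloc\to L^p_\uloc$ estimate \eqref{MaTe.estimate} from \cite{MaTe} for the near part (with $p=q=1$), whereas you rederive its content directly from the kernel; note that for the time integration you need Tonelli (swapping $\int_0^T dt\int_0^t ds$) rather than just $\int_0^t(t-s)^{-1/2}\,ds\lesssim T^{1/2}$ at fixed $t$, since $\|F(s)\chi_{B_2(x_0)}\|_{L^1}$ is only known to lie in $L^1_s(0,T)$, not $L^\infty_s$.
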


\begin{proof}

We will use the following estimate from \cite[{(1.10)}]{MaTe}: For $F\in (L^{p}_\uloc(\R^3))^{3\times 3}$ and $1\leq q\leq p\leq \I$, we have
\EQ{\label{MaTe.estimate}
\|e^{t\Delta} \mathbb P \nb \cdot F\|_{L^p_\uloc}\leq C \big( \frac 1 {t^{1/2}} +  \frac 1 {t^{(3/q-3/p ) /2 +1/2}}  \big) \|F\|_{L^q_\uloc}.
}

We work with a fixed ball centered  at $x_0$ of radius $1$, and rewrite the integral part of $u$ as
\EQN{ 
\int_0^t e^{(t-s)\Delta}\mathbb P \nb \cdot (u\otimes u) \,ds &= \int_0^t e^{(t-s)\Delta}\mathbb P \nb \cdot (u\otimes u \chi_{B_2(x_0)}) \,ds 
\\&+ \int_0^t e^{(t-s)\Delta}\mathbb P \nb \cdot (u\otimes u (1-\chi_{B_2(x_0)}) )\,ds
\\&=: I_{\text{near}} +I_{\text{far}}.
}
For the near-field part, by \eqref{MaTe.estimate},
\EQN{ 
\|  I_{\text{near}}\|_{L^1(B_1(x_0)\times (0,T))}
&\leq \int_0^T \int_0^t \| e^{(t-s)\Delta} \mathbb P\nb \cdot (u\otimes u \chi_{B_2(x_0)}) \|_{L^1_\uloc} \,ds  \,dt
\\&\leq C \int_0^T \int_0^t  \frac {1} {(t-s)^{1/2}}  \| u(s) \chi_{B_2(x_0)}\|_{L^2_\uloc}^2\,ds\,dt
\\&\leq C \int_0^T \| u(s) \chi_{B_2(x_0)}\|_{L^2_\uloc}^2\int_s^T \frac {1} {(t-s)^{1/2}}  \,dt\,ds
\\&\leq C  T^{1/2} \| u\|_{L^2_{\uloc}(0,T)}^2,
}
where we used Tonelli's theorem.
For the far-field part, letting $B=B_1(x_0)$ we have 
\EQN{ 
& \int_0^T \int_{B} \bigg|  \int_0^t e^{(t-s)\Delta}\mathbb P\nb\cdot ( u\otimes u (1-\chi_{B_2(x_0)})  )(s)\,ds\bigg| \,dx\,dt
\\&\leq    \int_0^T\int_B  \int_0^t \int_{|x_0-y|>2}   {\frac C {( |x-y|+\sqrt {t-s})^{4}} }|u\otimes u|(y,s) \,dy\,ds\,dx\,dt
}
where we have used the kernel estimates \eqref{ineq.oseen}.
Note that since $x\in B_1(x_0)$ and $|x_0-y|\geq 2$, we have
$|x_0-y|\leq 2 |x-y|$.
Hence
\EQN{ 
  \|  I_{\text{far}}\|_{L^1(B_1(x_0)\times (0,T))} 
&\leq
   {CT  \int_0^T \int_{|x_0-y|>2}   \frac C {|x_0-y|^4}|u|^2(y,s) \,dy\,ds,}
}
where we dropped the integral in the $x$-variable because nothing depended on $x$ anymore. Thus
\EQN{ 
  \|  I_{\text{far}}\|_{L^1(B_1(x_0)\times (0,T))} 
&\leq C\sum_{i=0}^\I \int_0^T  \int_{2^{i}2<|x_0-y|\leq 2^{(i+1)}2} \frac C {|x_0-y|^4}|u|^2 (y,s) \,dy\,ds
\\&\leq C \sum_{i=0}^\I  \frac 1 {(2^{i+1}2)^4  }\int_0^T  \int_{2^{i}2<|x_0-y|\leq 2^{(i+1)}2}|u|^2 (y,s) \,dy\,ds
\\&\leq C  \sup_{x_0\in \R^3} \int_0^T\| u^2\|_{L^1(B_1(x_0))}(s)\,ds \sum_{i=0} \frac 1 {2^i} 
\leq C  \| u\|_{L^2_{\uloc}(0,T)}^2.
}

Combining the estimates for $I_\text{near}$ and $I_\text{far}$ shows that $\bar u\in L^1_\uloc(0,T)$.
\end{proof}

\section{Bootstrapping scheme}\label{sec.approx}

To use Theorem \ref{thrm.pressure}, we need to approximate solutions to the Navier-Stokes equations by  $L^\I$ vector fields.  This is done by first mollifying the nonlinearity, then obtaining a pressure for the mollified nonlinearity, and finally solving a nonhomogeneous heat equation. In this section we describe this scheme and establish several convergence properties.

Given a vector field $u:\R^3\times (0,\I)\to \R^3$,  $\bar p$   represents a local pressure expansion of $u$. If $u$ is such that $f_{ij}=u_iu_j$
satisfies the assumptions of Theorem \ref{thrm.pressure}, then $\bar p$ is defined as in Theorem \ref{thrm.pressure} at every time $t$.
{It is a function and} satisfies the LPE. By Lemma \ref{lemma.adjoint} it also satisfies the DLPE and agrees the construction  \eqref{def.pressure.dist} in $\mathcal D'$.
If we only know that the given vector field $u$  is in $L^2_\uloc(0,T)$, then we define $\bar p$ as a distribution according to {Lemma \ref{lemma.pressure.distribution} and \eqref{def.pressure.dist}. It satisfies}
Definition \ref{def.localPressure2}, i.e. $\bar p$ satisfies the DLPE.  The same convention is taken for vector fields $u^\e$ and associated pressures $\bar p^\e$.
In this association of $\bar p$ to $u$, $u$ does not need to be a solution of the Navier-Stokes equations.

Let $F=u\otimes u$ be the matrix with entries $F_{ij}=u_iu_j$.
Let $\eta_\e(x,t)= \e^{-4} \eta  (x/\e,t/\e)$, where $\eta$ is a mollifier in $\R^4$ supported in $B(0,1)\times [0,1]$.  
Let
\EQN{
F^\e(x,t) = \eta_\e {\ \mathop{* }_{x,t}\ } (u\otimes u)(x,t),
}
where we are extending $u\otimes u$ by zero to negative times and times greater than $T$, and the convolution is in both space and time.
Then,  $F_{ij}^\e \in C^\I( \R^4  )$. Furthermore, for  $x_0\in \R^3$, $r>0$, and $\e\lesssim r$, we have for every $1\leq p<\I$, 
\EQN{
 \int_{Q_r(x_0)} |   F_{ij}^\e|^p\,dx {\,dt}  &=  \int_{Q_r(x_0)} | \eta_\e * (F_{ij} \chi_{Q_{r}^*(x_0)  })|^p\,dx {\,dt}
\\&\leq  \int_{\R^{3+1}} | \eta_\e * (F_{ij} \chi_{Q^*_{r}(x_0) })|^p\,dx {\,dt},
}
where $Q_r(x_0) = B_r(x_0)\times [0,T]$ and $Q_r^* (x_0) = B_{2r}(x_0)\times [-r,T+r]$.
Therefore, by Young's inequality, if $\e\lesssim r$, then
\EQ{\label{ineq.youngsinequality}
\|  F_{ij}^\e \|_{L^p(Q_r(x_0) )} \leq C  \|  F_{ij} \|_{L^p( Q^*_{r}(x_0))}\qquad (1\leq p<\I).
}
Note that $C$ is independent of $\e$ provided $\e\lesssim r$.   Taking $r=1$, it follows that if 
$u\in L^2_\uloc(0,T)$, then $F^\e \in L^{1}_\uloc(0,T)$.
Under this assumption we also have $F^\e\in L^\I(\R^3\times [0,T])$. Indeed, because $\eta$ has compact support,  if $(x,t)\in \R^3\times (0,T)$, then
\EQN{
  | F_\e(x,t) |&\leq \int_{B_2(0)\times [-1,T+1]} \eta_\e(x-y,t-s)|u (y,s) |^2\,dy\,ds \leq C_\e \|u\|_{L^2_\uloc(0,T)}^2.
}

Since $F_\e(t)\in L^\I(\R^3)$ for each $t\in (0,T)$ and is locally H\"older continuous due to properties of mollifiers, we can apply Theorem \ref{thrm.pressure} to obtain a distributional solution  $p^\e\in BMO$  to the Poisson equation
\EQN{ 
\Delta p^\e = -\partial_i\partial_j F_{ij}^\e.
}
Then, $p^\e=R_iR_j F^\e_{ij}$ in $BMO$.  By Lemma \ref{lemma.adjoint}, $p^\e$ agrees with the construction \eqref{def.pressure.dist} of a pressure satisfying the DLPE referenced in Lemma \ref{lemma.pressure.distribution2} where $u_iu_j$ is replaced by $F_{ij}^\e$.
Furthermore, $p^\e \in L^\I (0,T;BMO)$ since $F^\e\in L^\I(\R^3\times (0,T))$ and the Riesz transforms are bounded from $L^\I$ to $BMO$ (see \cite[p.~156]{Stein}).

 Let $  u_0^\e (x )= (\bar \eta_\e * u_0)(x )$ where $\bar \eta$ is a spatial mollifier  and 
\EQ{\label{uep.def}
u^\e(x,t) =  e^{t\Delta}u_0^\e (x)  - \int_0^t e^{(t-s)\Delta} \mathbb P \nb\cdot   F^\e \,ds.
}
Because $  F^\e, p^\e \in L^\I(0,T;BMO)$, it follows that $u^\e$ solves the heat equation 
\EQ{\label{eq.epsilon.stokes}
\partial_t u^\e -\Delta u^\e +\nb \cdot F^\e   +\nb p^\e=0,
}  
as a distribution \cite[Lemma 3.1]{Kukavica} (see also \cite{GIM}). 
In summary, for every $\e>0$, there exists a pair $u^\e$ and $p^\e$ so that $u^\e$ is mild, $p^\e$ satisfies the DLPE (for $F^\e$) and $u^\e$ solves \eqref{eq.epsilon.stokes}. The proofs of Theorems \ref{thrm.pressureNSE} and \ref{thrm.pressureImpliesMild} will bootstrap these relationships to solutions to the Navier-Stokes equations.

Before addressing this we deal with two preliminary convergence questions.

\begin{lemma}\label{lemma.convergenceToIntegralForm}
Assume $u_0\in {L^1_\loc}(\R^3)$, is divergence free and  $ e^{t\Delta}u_0\in L^1_\uloc(0,T)$.
Let $u\in L^2_\uloc(0,T)$ be given.
{Then for $u^\e$ defined by \eqref{uep.def},}
\[
u^\e(x,t)\to e^{t\Delta}u_0(x)-\int_0^t   e^{(t-s)\Delta}\mathbb P\cdot \nb (u\otimes u)\,ds,
\]
{as $\e \to 0$}
in $L^1_\loc(\R^3\times [0,T])$ and, therefore, in $\mathcal D'(\R^3\times (0,T))$.
\end{lemma}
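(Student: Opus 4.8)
The plan is to split $u^\e$ into its linear and nonlinear parts and treat them separately. For the linear part, $e^{t\Delta}u_0^\e = e^{t\Delta}(\bar\eta_\e * u_0)$. Since $e^{t\Delta}$ commutes with the spatial mollification, this equals $\bar\eta_\e * (e^{t\Delta}u_0)$, and because $e^{t\Delta}u_0 \in L^1_\uloc(0,T) \subset L^1_\loc(\R^3\times[0,T])$, standard properties of mollifiers give $\bar\eta_\e * (e^{t\Delta}u_0) \to e^{t\Delta}u_0$ in $L^1_\loc(\R^3\times[0,T])$ as $\e\to 0$. (One should check the mollification/semigroup commutation is valid at the level of $L^1_\loc$; this is routine since $e^{t\Delta}$ is given by convolution with the heat kernel and the two convolutions can be interchanged.)

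For the nonlinear part, I would show that
\[
\int_0^t e^{(t-s)\Delta}\mathbb P\nb\cdot F^\e\,ds \;\longrightarrow\; \int_0^t e^{(t-s)\Delta}\mathbb P\nb\cdot F\,ds
\]
in $L^1_\loc(\R^3\times[0,T])$, where $F = u\otimes u$ and $F^\e = \eta_\e *_{x,t} F$. By linearity this amounts to estimating $\int_0^t e^{(t-s)\Delta}\mathbb P\nb\cdot (F^\e - F)\,ds$ in $L^1(B_1(x_0)\times(0,T))$ uniformly in $x_0$ — in fact it suffices to do this locally. The key point is that the estimates are \emph{exactly} those used in the proof of Lemma \ref{lemma.mild}: splitting into a near-field piece $(F^\e-F)\chi_{B_2(x_0)}$ handled via the MaTe estimate \eqref{MaTe.estimate} with $p=q=1$, and a far-field piece $(F^\e-F)(1-\chi_{B_2(x_0)})$ handled via the Oseen kernel decay \eqref{ineq.oseen} and the dyadic summation. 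Carrying $G^\e := F^\e - F$ through that argument yields
\[
\Big\| \int_0^t e^{(t-s)\Delta}\mathbb P\nb\cdot G^\e\,ds \Big\|_{L^1(B_1(x_0)\times(0,T))} \le C\big( T^{1/2}\|G^\e\|_{L^1_\uloc(0,T)} + \|G^\e\|_{L^1_\uloc(0,T)}\big),
\]
with $C$ independent of $x_0$ and $\e$. So everything reduces to showing $\|F^\e - F\|_{L^1_\uloc(0,T)} \to 0$, or at least $\|F^\e - F\|_{L^1(Q)}\to 0$ on each compact $Q$, which follows from \eqref{ineq.youngsinequality} (with $p=1$, ensuring uniform $L^1_\uloc$ bounds so that dominated convergence / density arguments apply) together with the fact that $F \in L^1_\uloc(0,T)$ because $u\in L^2_\uloc(0,T)$.

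The main obstacle I anticipate is the far-field term: the dyadic tail estimate requires care because $F^\e$ involves a \emph{space-time} mollification, so $F^\e(\cdot,t)$ at a point $x_0$ is controlled by $F$ on $B_{2r}(x_0)\times[t-r,t+r]$ with $r\sim\e$ (this is why the $Q_r^*$ cylinders appear in \eqref{ineq.youngsinequality}), and one must verify that the support-spreading does not spoil the $|x_0-y|^{-4}$ decay that makes the dyadic sum converge — for $\e\lesssim 1$ it does not, since the kernel decay is evaluated at the base point. A secondary subtlety is that $\|F^\e-F\|_{L^1_\uloc(0,T)}$ need not tend to zero \emph{uniformly} in $x_0$ (translation continuity of the $L^1$ norm fails uniformly over $\R^3$ in general), so I would phrase the convergence locally: fix a ball $B_M(0)$, note $\int_0^t e^{(t-s)\Delta}\mathbb P\nb\cdot G^\e\,ds$ restricted to $B_M(0)\times(0,T)$ is controlled by $\|G^\e\|_{L^1(B_{M+2}(0)\times(0,T))}$ plus a far-field tail, the tail being dominated via the dyadic sum by $\|G^\e\|_{L^1_\uloc(0,T)}$, which is bounded by \eqref{ineq.youngsinequality} and tends to $0$ on the relevant compact sets by dominated convergence. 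This gives convergence in $L^1_\loc(\R^3\times[0,T])$, hence in $\mathcal D'(\R^3\times(0,T))$, completing the proof.
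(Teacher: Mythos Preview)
Your linear-part argument and the overall near/far decomposition match the paper's. The gap is in the far-field nonlinear term. Your displayed estimate controls everything by $\|G^\e\|_{L^1_\uloc(0,T)}$, and you correctly observe that this norm need not vanish as $\e\to 0$. But your proposed fix---cutting at $B_{M+2}(0)$ where $B_M(0)$ is the target ball for $L^1_\loc$ convergence---still leaves the far-field tail bounded only by a constant (depending on $M$) times $\|G^\e\|_{L^1_\uloc(0,T)}$, which is merely bounded uniformly in $\e$, not small. Since $M$ is fixed by the compact set you are testing on, you cannot send it to infinity, and nothing in your argument forces that term to zero; the proof as written does not close.

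The paper resolves this by decoupling the cutoff radius from the target ball. For a fixed unit target ball $B_1(x_0)$ it introduces an auxiliary radius $R>\max(4,2|x_0|)$ and cuts at $B_R(0)$. The dyadic tail then carries an explicit factor of $R^{-1}$:
\[
J_2 \lesssim \frac{T}{R}\big(\|F\|_{L^1_\uloc(0,T)} + \|F^\e\|_{L^1_\uloc(0,T)}\big),
\]
which is small \emph{uniformly in $\e$} once $R$ is large, by \eqref{ineq.youngsinequality}. With $R$ now fixed large, the near-field term $J_1 \le C(R,T)\,\|F^\e-F\|_{L^1(B_R(0)\times(0,T))}$ vanishes as $\e\to 0$ by ordinary mollifier convergence on the compact set $B_R(0)\times[0,T]$. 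The two-parameter limit---first $R\to\infty$, then $\e\to 0$---is the missing ingredient in your sketch.
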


\begin{proof} 
We first show $e^{t\Delta}u_0^\e\to e^{t\Delta}u_0$ in $L^1_\loc(\R^3\times [0,T])$.  Fix $x_0\in \R^3$ and consider $B_1(x_0)\times (0,T)$.  
We will show
\EQ{\label{u0ep.conv}
\int_0^T\int_{B_1(x_0)}|e^{t\Delta}u_0^\e- e^{t\Delta}u_0|\,dx\,dt \to 0,\quad \text{as}\quad \e \to 0.
}
Let $G_t$ be the kernel of $e^{t\Delta}$, $e^{t\Delta} u_0= G_t*u_0$. We have
\[
 e^{t\Delta} u_0^\e = G_t * (\bar \eta_\e * u_0)=
\bar \eta_\e * (G_t * u_0)=
\bar \eta_\e * (e^{t\Delta}u_0).
\]
Since $e^{t\Delta}u_0\in L^1_\uloc(0,T)$, for almost every $t$ we have $e^{t\Delta}u_0 \in   L^1(B_2(x_0))$. 
By properties of mollifiers, $e^{t\Delta} u_0^\e =\bar\eta_\e * (e^{t\Delta}u_0) \to e^{t\Delta}u_0$ in $L^1(B_1(x_0))$   for a.e. $t$.
Also note that {in $B_1(x_0)$}
\[
\bar \eta_\e * (G_t *u_0) = \bar \eta_\e * ( \chi_{B_2(x_0)} G_t *u_0),
\]
provided $\e\ll 1$.
By Young's inequality we therefore have
\[
\int_{B_1(x_0)}| \bar \eta_\e * (G_t *u_0) |\,dx \leq C\|\bar \eta_\e \|_1 \| G_t * u_0\|_{L^1{(B_2(x_0))}} =C\|\bar \eta_\e \|_1 \| e^{t\Delta}u_0\|_{L^1{(B_2(x_0))}}\in L^1(0,T),
\]
by our assumption that $e^{t\Delta }u_0 \in L^1_\uloc(0,T)$.
Then, by the dominated convergence theorem, we get \eqref{u0ep.conv}.

We next show
\[
\int_0^t e^{(t-s)\Delta}\mathbb P\nb\cdot ( F-F^\e )\,ds\to 0,\quad {\text{as}\quad \e \to 0},
\]
in ${L^1}(B_1(x_0)\times [0,T])$ for any $x_0$ by showing that
\EQ{\label{eq5.9}
	\int_0^T\int_{B_1(x_0)}  \bigg| \int_0^t e^{(t-s)\Delta}\mathbb P\nb\cdot ( F-F^\e )\,ds\bigg| \,dx\,dt
}
can be made arbitrarily small by taking $\e$ sufficiently small.  Let $R>4$ and $R> 2|x_0|$ be given.  We split the term we are bounding as
\EQN{ 
\eqref{eq5.9}
&\leq  \int_0^T\int_{B_1(x_0)} \bigg| \int_0^t e^{(t-s)\Delta}\mathbb P\nb\cdot (( F-F^\e ) \chi_{B_R(0)}  )\,ds\bigg|  \,dx\,dt 
\\&+\int_0^T\int_{B_1(x_0)} \bigg| \int_0^t e^{(t-s)\Delta}\mathbb P\nb\cdot ( (F-F^\e)(1-\chi_{B_R(0)} ) )\,ds\bigg|  \,dx\,dt  {=:J_1+J_2.}
}
Using \eqref{MaTe.estimate} and Tonelli's theorem,   %
\EQN{  
J_1&\leq  C \int_0^T \int_0^t   \| e^{(t-s)\Delta} \mathbb P\nb\cdot (F^\e-F) (s)\chi_{B_R(0)} \|_{L^1_{\uloc}}\,ds\,dt
\\&\leq  C(R) \int_0^T \int_0^t 
\frac 1 {(t-s)^{1/2}} 
\| (F^\e-F) (s)\chi_{B_R(0)} \|_{L^1_\uloc} \,ds\,dt
\\&= C(R) \int_0^T\| (F^\e-F) (s)\chi_{B_R(0)} \|_{L^1_\uloc} \int_s^T \frac 1 {(t-s)^{1/2}}\,dt\,ds
\\&\leq C(R,T) \int_0^T\int_{B_R(0)} |(F^\e-F) (x,s)| \,dx \,ds.
}
By properties of mollifiers, this can be made small by taking $\e$ sufficiently small. 

For the far-field part, using  the Oseen tensor estimate  \eqref{ineq.oseen} 
\EQN{
 {J_2}& \le \int_0^T\int_{B_1(x_0)} \int_0^t  \int_{|y|>R}   \frac C {   (|x-y|+\sqrt {t-s})^{4} }|F-F^\e| (y) \,dy \,ds\,dx\,dt.
}
Since $x\in B_1(x_0)$ and $|y|\geq R \geq |x_0|/2$, 
\EQ{\label{ineq.farfield.R2}
 {J_2}&\leq \int_0^T\int_{B_1(x_0)} \int_0^t \int_{|y|>R}   \frac C {|y|^4}| F-F^\e| (y,s) \,dy \,ds\,dx\,dt
\\
&\leq CT \int_0^T \int_{|y|>R}   \frac C {|y|^4}| F-F^\e| (y,s) \,dy \,ds
\\
&\leq  C T \sum_{i=0}^\I \frac 1 {(2^{i}R)^4} \int_0^T \int_{2^iR <|y|< 2^{i+1}R} | F-F^\e|(y,s) \,dy\,ds
\\&\leq \frac {CT} R \|F-F^\e \|_{L^1_\uloc (0,T)}
{\lec \frac {CT} R (\|F \|_{L^1_\uloc (0,T)}+ \|F^\e \|_{L^1_\uloc (0,T)}).}
}
Using the uniform bound \eqref{ineq.youngsinequality}, we can first take $R$ sufficiently large to make $J_2$ small, then take $\e$ sufficiently small to make $J_1$ small. Hence $\lim_{\e\to 0}\eqref{eq5.9}= 0$. 

We have shown
\[
u^\e \to e^{t\Delta}u_0 - \int_0^t  e^{(t-s)\Delta}\mathbb P \nb\cdot (u\otimes u)\,ds,
\]
in $L^1(B_1(x_0)\times [0,T])$ for every $x_0$. Convergence in $L^1_\loc(\R^3\times [0,T])$ follows.
\end{proof}

On the other hand, we need to prove that $ p^\e$ converges to $\bar p$ in some sense.

\begin{lemma}\label{lemma.pressureConvergence}
Assume   $u\in L^2_\uloc(0,T)$.
Let $p^\e$ be defined as above, and let  $\bar p$ be defined by Lemma \ref{lemma.pressure.distribution} and \eqref{def.pressure.dist}.
Then we have 
\[
  p^\e \to   \bar p,
\]
as distributions in $\cD'(\R^3 \times (0,T))$. Consequently, $\nb   p^\e \to   \nb \bar p $ in $\cD'(\R^3 \times (0,T))$ as well.
\end{lemma}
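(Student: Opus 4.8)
The plan is to prove $p^\e \to \bar p$ by showing convergence of the near-field and far-field parts separately, reducing everything to the convergence $F^\e \to F = u\otimes u$ in $L^1_\uloc(0,T)$ that follows from properties of mollifiers together with the uniform bound \eqref{ineq.youngsinequality}. Fix a test function $\psi \in \mathcal D(\R^3\times (0,T))$ and let $n$ be large enough that $\supp\psi \subset B_n(0)\times (0,T)$. By Lemma \ref{lemma.adjoint} applied to $F^\e$ (which satisfies the hypotheses of Theorem \ref{thrm.pressure} since $F^\e \in L^\I \cap C^\al_\loc$), $p^\e$ agrees in $\mathcal D'$ with the construction \eqref{def.pressure.dist} with $u_iu_j$ replaced by $F^\e_{ij}$. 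Hence we may write
\EQN{
\langle p^\e, \psi\rangle &= \int_0^T\int F^\e_{ij} \th_n(-x) R_iR_j\psi \,dx\,dt
\\&\quad + \int_0^T\int\int (K_{ij}(x-y)-K_{ij}(-y))(1-\th_n(-y)) F^\e_{ij}(y,t)\,dy\,\psi(x,t)\,dx\,dt
\\&\quad + \int_0^T\int \Big(\sum_{k=2}^n c_k^\e(t)\Big)\psi\,dx\,dt,
}
and similarly for $\bar p$ with $F^\e$ replaced by $F$. It then suffices to pass to the limit in each of these three terms.

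For the near-field term, $R_iR_j\psi \in L^\I$ by the estimate \eqref{ineq.riesz.bounded} established in Lemma \ref{lemma.pressure.distribution}, and $\th_n(-x)R_iR_j\psi$ is bounded with support in $B_{4n+4}(0)\times(0,T)$; so the term is controlled by $\|F^\e - F\|_{L^1(B_{4n+4}(0)\times(0,T))}$, which tends to $0$ by properties of mollifiers (using $F \in L^1_\uloc(0,T)$, which holds since $u \in L^2_\uloc(0,T)$). For the far-field term, $(K_{ij}(x-y)-K_{ij}(-y))(1-\th_n(-y))$ enjoys the extra decay $\lesssim |y|^{-4}$ for $x$ in the compact support of $\psi$ and $|y| \gtrsim n$ — exactly the structure exploited in Lemma \ref{lemma.mild} and Lemma \ref{lemma.convergenceToIntegralForm} — so after integrating in $x$ against $\psi$ and performing the dyadic decomposition in $|y|$ as in \eqref{ineq.farfield.R2}, this term is bounded by $C(\psi,n)\|F^\e - F\|_{L^1_\uloc(0,T)} \to 0$, again by mollifier properties combined with the uniform bound \eqref{ineq.youngsinequality}. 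The constants $c_k^\e(t)$ involve $K_{ij}(-y)(\th_k - \th_{k-1})(-y)$, whose support lies in an annulus away from the origin where the kernel is bounded, so $\|c_k^\e - c_k\|_{L^1(0,T)} \lesssim \|F^\e - F\|_{L^1_\uloc(0,T)} \to 0$, and the finite sum $\sum_{k=2}^n$ is handled termwise. Combining the three gives $\langle p^\e,\psi\rangle \to \langle\bar p,\psi\rangle$, i.e. $p^\e \to \bar p$ in $\mathcal D'$. Convergence of gradients is immediate: for $\psi \in \mathcal D(\R^3\times(0,T))^3$, $\langle \nb p^\e,\psi\rangle = -\langle p^\e, \nb\cdot\psi\rangle \to -\langle \bar p, \nb\cdot\psi\rangle = \langle\nb\bar p,\psi\rangle$.

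The main obstacle is bookkeeping rather than conceptual: one must be careful that the decomposition of $\langle p^\e,\psi\rangle$ used here genuinely matches \eqref{def.pressure.dist} for the chosen $n$ (this is where Lemma \ref{lemma.adjoint} and the well-definedness established in Lemma \ref{lemma.pressure.distribution2} are invoked), and that all the Fubini/dominated-convergence manipulations needed to move the Riesz transforms around are legitimate — but these are precisely the technical facts already proved in Sections \ref{sec.pressureformula} and \ref{sec.presure.loc}, so the argument here is essentially an assembly of those pieces together with the elementary mollification estimate $\|F^\e - F\|_{L^1_\uloc(0,T)} \to 0$.
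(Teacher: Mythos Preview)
Your overall decomposition into near-field, far-field, and constants matches the paper's approach, and the near-field argument is correct. However, there is a genuine gap in your treatment of the far-field term. You bound it by $C(\psi,n)\,\|F^\e - F\|_{L^1_\uloc(0,T)}$ and assert this tends to zero ``by mollifier properties combined with the uniform bound \eqref{ineq.youngsinequality}.'' But mollification does \emph{not} give $\|F^\e - F\|_{L^1_\uloc(0,T)} \to 0$ in general: convergence of $\eta_\e * F \to F$ holds in $L^1_\loc$, not uniformly over all unit balls. The uniform bound \eqref{ineq.youngsinequality} only says $\|F^\e\|_{L^1_\uloc}$ is bounded independently of $\e$; it says nothing about $\|F^\e - F\|_{L^1_\uloc}$ being small. (A sequence of increasingly concentrated bumps translated out to infinity gives an easy counterexample.) Your reference to \eqref{ineq.farfield.R2} is the right place to look, but you have misread its conclusion: there the point is that the prefactor $CT/R$ decays, not that $\|F^\e - F\|_{L^1_\uloc}\to 0$.

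The paper repairs exactly this issue by a two-step limit. Since $\supp\psi\subset B_{n_0}(0)$ implies $\supp\psi\subset B_n(0)$ for every $n\ge n_0$, one is free to take $n$ arbitrarily large. The dyadic estimate then gives the far-field contribution bounded by $\tfrac{C}{n}\big(\|F\|_{L^1_\uloc(0,T)}+\|F^\e\|_{L^1_\uloc(0,T)}\big)$, which is small \emph{uniformly in $\e$} by \eqref{ineq.youngsinequality}. One first chooses $n$ large to make the far-field small, and \emph{then}, with $n$ fixed, sends $\e\to 0$ to handle the near-field and the constants via $L^1_\loc$ convergence on the fixed compact sets $B_{4n+4}(0)$ and the annuli supporting $\th_k-\th_{k-1}$. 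Your constants argument, as written, also invokes the false $L^1_\uloc$ convergence; it is easily fixed since each $c_k^\e - c_k$ is controlled by $\|F^\e - F\|_{L^1}$ on a fixed annulus, but you should state it that way.
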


This lemma makes no assumption on $u_0$, and $u$ need not be a solution to \eqref{eq.NSE}.
\begin{proof}

Let $\phi\in C_c^\I (\R^3\times (0,T))$ be given.
Fix a ball $B= B_n(0)$ and $\tau>0$ so that $B\times [0,T-\tau]$ contains the support of $\phi$.    Then,
\[
\langle   \bar p,  \phi\rangle = \langle \bar p^{n}  ,  \phi		\rangle  + \langle  \td c_n  ,\phi  \rangle,
\]
where $\bar p^n=\bar p^{B_n(0)}$  and $\td c_n$ is the sum appearing in \eqref{def.pressure.dist}.
By Lemma \ref{lemma.adjoint}, we have the corresponding equality for $p^\e$, namely
\[
\langle  p^\e ,   \phi \rangle = 
\langle  p^{\e,n} , \phi \rangle +  \langle  \td c_n^\e  ,\phi  \rangle.
\]

We clearly have
\EQN{ 
&\langle \bar p_{\near}^{n}  -   p_{\near}^{\e,n},   \phi\rangle  
\\&= \int_0^T\int (F_{ij}-F_{ij}^\e)(x,t)(\th_{n}(-x)) R_iR_j  \phi)(x,t)\,dx\,dt \to 0,
} 
by properties of mollifiers because $R_i R_j(\phi)  \in L^\I(\supp \th_{n}   \times (0,T))$ (to see this use Lipschitz continuity to deplete the singularity of the kernel and H\"older's inequality to account for the tail of the kernel) and $\th_{n}$ is compactly supported. 
 
Also by properties of mollifiers, we see that 
\[
\td c_n^\e \to \td c_n,
\]
in $\mathcal D'(0,T)$. Therefore
\[
 \langle   \td c_n - \td c_n^\e  ,\phi  \rangle \to 0.
\]

For the far-field part we let
\[ J^{\e,n}(x,t) = 
\int  (K_{ij}(x-y)-K_{ij} (-y)  ) (1- \th_{n}( -y) ) (F_{ij} -F_{ij}^\e )(y,t)\,dy.
\]
 For $x\in B_n(0)$, we have  
\EQN{
|J^{\e,n}  (x,t) |
&\leq \int_{|y|>n} \frac C {|y|^4} | F-F^\e  | (y,t)\,dy, 
}
which is essentially  the inner integral of \eqref{ineq.farfield.R2} where $n$ has replaced $R$.  Since there is no dependence on $x$ we obtain
\EQN{\label{ineq.farfield.m}
 \langle   \bar p_{\far}^{n}  -   p_{\far}^{\e,n},\phi\rangle  
&=\int_0^T\int J^{\e,n}   \phi \,dx\,dt
\\&\leq C(\phi) \int_0^T \int_{|y|>n}  \frac 1 {|y|^4} | F-F^\e  | (y,t)\,dy		\,dt
\\&\leq \frac {CT} {n} \|F-F^\e\|_{L^1_\uloc(0,T)} \leq 
 \frac {CT} {n}  \big\||F|+|F^\e| \big\|_{L^1_\uloc(0,T)},
}
where we reasoned as in \eqref{ineq.farfield.R2}. 

To conclude, 
we first make the far-field contribution arbitrarily small by taking $n$ sufficiently large and then make the near-field and constant contributions arbitrarily small by taking $\e$ sufficiently small. This shows that 
\[
\langle  p^\e ,\phi \rangle \to \langle \bar p ,\phi \rangle,
\]
for every $\phi\in \mathcal D(\R^3\times (0,T))$, implying $ p^\e \to   \bar p$ in $\mathcal D'(\R^3\times (0,T))$.
\end{proof}

\begin{remark} \label{remark.Lp}
If $u\in L^q_\loc(\R^3\times (0,T))$, $2<q<\I$, and $\bar p$ satisfies the LPE (i.e. in the sense of Definition \ref{def.localPressure}), then the above argument can be adapted to prove that $  p^\e\to   p$ in $\mathcal D'$ using the boundedness of Calderon-Zygmund operators on $L^{q/2}$ to deal with the near-field part. The far-field part is treated the same.
 
\end{remark}

\section{Mild solutions are DLPE solutions}\label{sec.proof.main}

\begin{proof}[Proof of Theorem \ref{thrm.pressureNSE}]

Under the assumptions of Theorem \ref{thrm.pressureNSE}, let $F^\e,u^\e,p^\e,\bar p$ be defined as in Section \ref{sec.approx}. In particular, $\bar p$ is defined by \eqref{def.pressure.dist}. We know by Lemma \ref{lemma.convergenceToIntegralForm} that $u^\e$ converges to $u$ in $L^1_\loc(\R^3\times [0,T] )$.

We first show that $u$ and $\bar p$ satisfy
\EQN{
\partial_t  u -\Delta u +u\cdot \nb u +\nb \bar p = 0;\qquad \nb\cdot   u = 0,
}
with initial data $u_0$ in the sense of distributions, i.e., for any $\phi \in (C_c^\I(\R^3\times (0,T)))$,  
\EQ{\label{eq.weak.form}
\int_0^T\int u\cdot (\partial_t   {+}\Delta )\phi +  u_iu_j \partial_i \phi_j)\,dx\,dt  {-} \langle \nb \bar  p,\, \phi \rangle =0,
}
where we have summed over $i,j$.
By \cite[Lemma 3.1]{Kukavica}, ${u^\e}$ solves   
\EQ{ \label{eq.heat}
\partial_t u^\e - \Delta u^\e +\nb \cdot F^\e +\nb p^\e=0,
}
distributionally.   
Using \eqref{eq.heat}, the left hand side of \eqref{eq.weak.form} is equal to 
\EQN{
\int_0^T\int (u-u^\e)\cdot (\partial_t   {+}\Delta )\phi + ( F_{ij} -F_{ij}^\e) \partial_i \phi_j \,dx\,dt+\langle \bar  p -p^\e,\, \nb\cdot \phi \rangle.
}
It suffices to show this  vanishes as $\e\to 0$.

The first term is $\int_0^T\int (u-u^\e)\cdot (\partial_t  {+}\Delta )\phi \,dx\,dt$, which vanishes by Lemma \ref{lemma.convergenceToIntegralForm} and the fact that  $u$ is mild.
The nonlinear term,
\[
\int_0^t \int ( F_{ij}-  F_{ij}^\e ) \partial_i \phi_j \,dx\,dt,
\]
vanishes by properties of mollifiers.  The pressure term vanishes by Lemma \ref{lemma.pressureConvergence}.
This proves the main part of the theorem.

Now assume  that $u\in L^q_\loc(\R^3\times [0,T))$ for some $q>2$. 
Let $\pi$ be defined as in Theorem \ref{thrm.pressure}. We estimate $\pi$ in $L^{q/2}_\loc(\R^3\times (0,T))$.  Let $\de>0$ be given.  For simplicity we do this over concentric shells.  Fix $n\in \N$.  For $x\in B_{n+1}\setminus B_n$ we  write $\pi$ as  
\EQN{ 
\pi (x,t)
=p_{1}+ p_{2} +\td c_{n+1}(t),
}
where $p_1$ is the Calderon-Zygmund part of $G_{ij}^B (u_iu_j)$ from \eqref{eq.pressureexpansion} where $B=B_{n+1}(0)$, $p_2$ is the remaining term in \eqref{eq.pressureexpansion}  and $\td c_{n+1}$ comes from the comments following Theorem \ref{thrm.pressure}.
By the Calderon-Zygmund theory, we have    
\[
\|  p_{1} \|_{L^{q/2}(0,T-\de;L^{q/2} (B_{n+1}\setminus B_n ) )}\leq \|  u \|_{L^{q}(0,T-\de;L^{q} (B_{n+1}(0)) )}^2.
\]
The right hand side is finite since $u\in L^q_\loc(\R^3\times (0,T))$.
On the other hand, as before
\[
| p_{2} (x,t)|\leq \int_{|y|>2n} \frac 1{|y|^4} |u(y,t)|^2\,dy, 
\]
implying 
\[
\int_0^{T-\de}\int_{B_{n+1}\setminus B_n} |p_2 (x,t)|^{q/2} \,dx\,dt \leq C T n^3 \|u\|_{L^2_\uloc(0,T)}^q.
\]
Finally we consider the constant $\td c_{n+1}(t)$.  From Section \ref{sec.pressureformula}, we know that
\[
\td c_{n+1}(t)=\int \big( K_{ij}^{2} (-y)   - K_{ij}^{2(n+2)}(-y)\big) u_iu_j (y,t)\,dy \leq C(n) \|u (t) \|_{L^q(B_{4n}(0))}^{2/q}.
\]
This is certainly in $L^{q/2}(0,T)$ because $u\in L^{q/2}_\loc(\R^3\times [0,T)  )$.   Since the above estimates hold for all $n$ and $\de>0$, 
it follows that  $\pi\in L_\loc^{q/2}(\R^3\times [0,T))$.
 
Note that $p^\e$ satisfies both the LPE and DLPE by Lemma \ref{lemma.adjoint}.  Since $\pi\in L_\loc^{q/2}(\R^3\times [0,T))$, $\pi$ is defined in $\mathcal D'$.  We have $ p^\e \to  \bar p$ in $\mathcal D'$ by Lemma \ref{lemma.pressureConvergence} and $ p^\e \to  \pi$ in  $\mathcal D'(\R^3\times (0,T))$ by
Remark \ref{remark.Lp}. It follows that $\bar p =  \pi$ in $\mathcal D'(\R^3\times (0,T))$ and therefore $\bar p = \pi$ a.e.~in $\R^3\times (0,T)$. Hence,  $\bar p \in L_\loc^{q/2}(\R^3\times [0,T))$.

Finally we show that if $u\in \mathcal N(u_0)$,  then $u$ and $\bar p$ satisfy the local energy inequality. 
Consequently, $u$ is a local Leray 
 solution with {\emph{this choice of pressure}} $\bar p$.
If $u\in  \mathcal N(u_0)$, then there exists $p$ so that $u$ and $p$ satisfy Definition \ref{def:localLeray}.  Since $u$ satisfies \eqref{eq.NSE} for both $p$ and $\bar p$, it follows that $\nb p=\nb \bar p$ as distributions.
Because $u$ and $p$ satisfy the local energy inequality, it suffices to show
\[
\int_0^T\int p( u\cdot \nb \phi   ) \,dx\,dt = \int_0^T\int \bar p(u\cdot \nb \phi )\,dx\,dt,
\]
where $\phi\in C_c^\I(Q)$ for some parabolic cylinder compactly embedded in $\R^3\times (0,\I)$, as this will establish the local energy inequality for $u$ and $\bar p$.
 
Note that $u\cdot \nb \phi \in L^{3}(Q)$.  We can therefore approximate $u\cdot \nb \phi |_Q $ in $L^{3}( \R^3\times (0,\I))$ by a sequence $ \nb\cdot \Phi_n$ so that $\Phi_n \in C_c^\I (  Q^*)$ where $Q^*$ is a neighborhood of $Q$ that is still compact and bounded away from $t=0$. 

Since $\nb p=\nb \bar p$ as distributions and $ \Phi_n \in C_c^\I(\R^3\times (0,\I))$, 
\EQN{
\int_0^T\int (p-\bar p )\nb\cdot \Phi_n \,dx\,dt=0,
}
for all $n$.  So,
\EQN{
\int_0^T\int (p-\bar p ) ( u  \cdot\nb \phi )\,dx\,dt=
\int_0^T\int (p-\bar p ) (u  \cdot\nb \phi  - \nb\cdot \Phi_n )\,dx\,dt.
}
This vanishes because $p,\,\bar p\in L^{3/2}_\loc (\R^3\times [0,T])$  and $\nb\cdot \Phi^n\to u  \cdot\nb \phi$ in $L^3(\R^3\times (0,T))$.    

It follows that $u$ and $\bar p$ satisfy all elements of Definition \ref{def:localLeray}.  The structure of $\bar p$ implies $u$ is, additionally, a local energy solution.
\end{proof}

\section{DLPE solutions are mild solutions}\label{sec.proof.main.2}

\begin{proof}[Proof of Theorem \ref{thrm.pressureImpliesMild}]
We work under the assumptions of Theorem \ref{thrm.pressureImpliesMild} and let $F^\e,u^\e,p^\e$ be as in Section \ref{sec.approx}. So, the pressure $p$ associated with $u$ is defined and satisfies the DLPE.   Note that this may not agree with the construction \eqref{def.pressure.dist} in $\mathcal D'$. However, if $\bar p$ is defined according to \eqref{def.pressure.dist}, then $\nb p= \nb \bar p$ in $\mathcal D'$.  This is because the definitions of $p$ and $\bar p$ only differ by constants.

Note that $\nb p^\e\to \nb p$ as distributions by Lemma \ref{lemma.pressureConvergence}.
By \cite[Lemma 3.1]{Kukavica}, {as for \eqref{eq.epsilon.stokes}}, 
\[
u^\e(x,t)= e^{t\Delta}u_0^\e (x) -\int_0^te^{(t-s)\Delta} \mathbb P\nb\cdot F^\e\,ds,
\]
 is a distributional solution to the non-homogeneous heat equation  
\[
\partial_t u^\e - \Delta u^\e = - \nb \cdot F^\e -\nb p^\e.
\]
Let
\[
\bar u(x,t)=e^{t\Delta}u_0  (x) -\int_0^te^{(t-s)\Delta} \mathbb P\nb\cdot (u\otimes u)\,ds.
\]
By Lemma \ref{lemma.convergenceToIntegralForm}, $u^\e\to \bar u$ as distributions.  As in the proof of Theorem \ref{thrm.pressureNSE}, we see that $\bar u$ is a distributional solution
to the non-homogeneous heat equation
\[
\partial_t \bar u - \Delta \bar u = - u\cdot\nb u -\nb p.
\]
Also note $\bar u\in L^1_\uloc(0,T)$ by Lemma \ref{lemma.mild}.
Thus
\[
\partial_t \bar u - \Delta \bar u +u\cdot \nb u +\nb p = \partial_t u -\Delta u +u\cdot
\nb u +\nb p,
\]
implying 
\[
\partial_t (\bar u-u)-\Delta (\bar u -u)=0,
\]
all in the distributional sense.  We will show $u=\bar u$ using uniqueness for the heat equation but first need to show that $\bar u$ converges to $u_0$ in $L^1_\loc (\R^3)$.
 
Note that we have assumed for every compact set $K$ that 
\[
\int_K |u(x,t)-u_0(x)|^2\,dx \to 0.
\]
This implies that\footnote{It is in fact sufficient to have $u\to u_0$ in an $L^1_\loc$ sense if we also have 
$\limsup_{t\to 0^+} \| u(t)\|_{L^2(K)}<\I$ for every compact $K$.} 
\[
\int_K |\bar u(x,t)-u_0(x)| \,dx \to 0,
\]
as we now show.  Certainly 
\[
\int_K |e^{t\Delta}u_0(x)-u_0(x)| \,dx \to 0,
\]
and so it suffices to show 
\[
\int_K |I(x,t)| \,dx \to 0,
\]
as $t\to 0^+$
where 
\[
I(x,t)=\int_0^te^{(t-s)\Delta}\mathbb P\nb \cdot (u\otimes u)\,ds.
\]
Let $\varepsilon>0$ be given and let $I = I_{\text{near}}^R + I_{\text{far}}^R$, where 
\[
 I_{\text{near}}^R= \int_0^t e^{(t-s)\Delta}\mathbb P \nb \cdot (u\otimes u \chi_{B_{2R}(0)}) \,{ds,}
\]
and 
\[
 I_{\text{far}}^R= \int_0^t e^{(t-s)\Delta}\mathbb P \nb \cdot (u\otimes u(1- \chi_{B_{2R}(0)})) \,{ds.}
\]
By taking $R\in\N$ sufficiently large we have $K\subset B_R(0)$.

Following a familiar argument we have for any $\ga>0$ that
\EQN{
&\sup_{0<t<\gamma} \int_K |I_{\text{far}}^R(x,t)| \,dx 
\\&\leq C \sup_{0<t<\gamma}\int_K  \int_0^t \int_{|y|>R}   \frac C {|y|^4}|u|^2 (y) \,dy\,ds\,dx
\\&\leq C(K)\sup_{0<t<\gamma} \sum_{i=0}^\I \int_0^t  \int_{2^{i}R<|y|\leq 2^{(i+1)}R} \frac C {|y|^4}|u|^2  (y,s) \,dy\,ds
\\&\leq C \sup_{0<t<\gamma} \sum_{i=0}^\I  \frac 1 {(2^{i+1}R)^4  }  \int_0^t\int_{2^{i}R <|y|\leq 2^{(i+1)}R}|u|^2  (y) \,dy\,ds
\\&\leq C   R^{-1}  \sup_{x_0\in \R^3} {\int_0^T} \int_{B_1(x_0)} |u|^2(x,t)\,dx\,dt.
}
By taking $R$ large this can be made less than $\varepsilon/2$. Fix $R$ so that this is true.
For the other part, {with $\ga<1$}
\EQN{ 
\sup_{0<t<\gamma} \|  I_{\text{near}}^R\|_{L^1(K)}
&\leq \sup_{0<t<\gamma} \int_0^t \| e^{(t-s)\Delta} \mathbb P\nb \cdot (u\otimes u \chi_{B_{2R}(0)}) \|_{L^1} \,ds  
\\&\leq C\sup_{0<t<\gamma} \int_0^t\frac {1} {(t-s)^{1/2}} \| u(s) \chi_{B_{2R}(0)}\|_{L^2}^2\,ds.
}
Since $\| u(s)-u_0\|_{L^2 (B_{2R}(0))} \to 0$, there exists $\gamma_*<1$ so that $\sup_{0<s<\gamma_*}\| u(s) \|^2_{L^2 (B_{2R}(0))}< 2\|u_0\|^2_{L^2(B_{2R}(0))}$.  Hence, for $\gamma<\gamma_*$,
\EQN{ 
\sup_{0<t<\gamma} \|  I_{\text{near}}^R\|_{L^1(K)}&\leq 2C\gamma^{1/2}\|u_0\|^2_{L^2(B_{2R}(0))}.
}
By taking $\gamma$ sufficiently small we can ensure 
\[
\sup_{0<t<\gamma} \|  I_{\text{near}}^R\|_{L^1(K)} <\varepsilon/2.
\]
This proves that 
$\int_K |\bar u(x,t)-u_0(x)| \,dx \to 0$.\footnote{This conclusion also appears in \cite[Lemma 11.3]{LR}. We include the details for completeness.}

We will now prove that $u=\bar u$.  Let $w=u-\bar u$ and let $\eta_\e$ be the space-time mollifier from Section \ref{sec.approx}, {supported in $B_\e(0)\times[0,\e]$.}  Let $w_\e = \eta_\e * w$ where $w$ is extended by zero when required.  We will check that $w_\e$ is a bounded solution to the heat equation in the distributional sense on $\R^3\times (0,T)$ and $w_\e |_{t=0}=0$. By uniqueness, it follows that $w_\e = 0$ and, in turn, $w=0$.

We first show that $w_\e$ solves the heat equation. Let $\psi\in C_c^\I(\R^3\times (0,T))$. Note that $\eta_\e$ is zero except possibly at times in {$[0,\e]$}. Then
\EQN{
&\int_0^T \int (\partial_t +\Delta)\psi(x,t ) w_\e(x,t)\,dx\,dt 
\\&=\int_0^\e \int \eta_\e(y,s) \int_0^T\int	(\partial_t +\Delta)\psi(x,t) w(x-y,t-s)							\,dx\,dt	\,dy\,ds
\\&= \int_0^\e \int \eta_\e(y,s) \int_0^T\int	(\partial_\tau +\Delta_z)\psi(z+y,\tau+s) w(z,\tau)							\,dz\,d\tau	\,dy\,ds,
}
where we have let $z=x-y$ and $\tau=t-s$ and used the fact that $w(z,\tau)=0$ for $\tau\leq 0$. We would like to conclude that the inside integral is zero but cannot immediately because the test function (as a function of $z,\tau$) is not compactly supported in $\R^3\times (0,T)$. Indeed, it is possibly nonzero at $\tau=0$.
For $\ga>0$ let $\si_\ga(t) =1 $ for $t\in [0,\ga/3]$, decrease to $0$ with gradient bounded in absolute value by $C\ga^{-1}$ over $[\ga/3, 2\ga/3]$ and equal zero for $t\geq 2\ga/3$. We take $C$ to be independent of $\ga$, e.g. define $\si_\ga = \si_1( x /\ga  )$ for a  fixed function $\si_1$.  Let $\psi=\psi_1 +\psi_2$ where $\psi_1(z,\tau) = \si_\ga(\tau) \psi(z+y,\tau+s)$ and $\psi_2(z,\tau) = \psi(z+y,\tau+s)-\psi_1(z,\tau)$.  Then, 
\EQN{
\int_0^\e \int \eta_\e(y,s) \int_0^T\int	(\partial_\tau +\Delta_z)\psi_2(z,\tau) w(z,\tau)							\,dz\,d\tau	\,dy\,ds = 0,
}
because $\psi_2$ is a test function on $(0,T)\times \R^3$. On the other hand, let $B$ contain the spatial support of $\psi$. Then, ${\psi_1}(z,\tau) w(z,\tau)\neq 0$ implies $z+y=x\in B$, i.e., $z\in (B-y)$. Since $|y|<\e$ and assuming $\e\ll 1$, we therefore have ${\psi_1}(z,\tau) w(z,\tau)\neq 0$ only if $z\in 2B$, regardless of $y$.  Hence,
\EQN{
&\bigg|\int_0^\e \int \eta_\e(y,s) \int_0^T\int	(\partial_\tau +\Delta_z)\psi_1(z,\tau) w(z,\tau)							\,dz\,d\tau	\,dy\,ds\bigg| 
\\&\leq \frac C \ga  \int_0^\ga\int_{2B}	 |w(z,\tau)	|		\,dz\,d\tau	 + C \int_0^\ga \int_{2B} |w(z,\tau)|\,dz\,d\tau
\\&\leq C(1+\ga) \sup_{0<\tau <\ga} \int_{2B} |w(z,\tau)|\,dz
\\&\leq C(1+\ga) \sup_{0<\tau <\ga} \bigg(  \int_{2B} |u(z,\tau)-u_0(z)|\,dz + 		\int_{2B} |\bar u(z,\tau)-u_0(z)|\,dz 	\bigg).
}
By assumption, the above clearly vanishes as $\ga\to 0^+$. 
This implies  
\[
\int_0^T \int (\partial_t +\Delta)\psi(x,t ) w_\e(x,t)\,dx\,dt =0,
\]
for all $\psi\in C_c^\I(\R^3\times (0,\I))$ and so $w_\e$ solves the heat equation distributionally.
Furthermore, we have
\[
w_\e(x,0)= \int_\R \int_{\R^3}  \eta_\e(x-y,0-s) w(y,s)\,dy\,ds.
\]
The integrand is nonzero only if $0<0-s<\e$, in which case $s<0$ and so $w(y,s)=0$. This implies $w_\e(x,0)=0$.  That $w_\e$ is bounded follows from the fact that  $w\in L^1_\uloc(0,T)$.
We have thus shown that $w_\e$ is a bounded distributional solution of the heat equation with initial data identically zero. It follows that $w=0$ {and $u=\bar u$}.
\end{proof} 
 
\begin{remark} \label{remark.unitary.scale2}
We can clearly weaken the continuity assumption to the following: for every compact set $K$, 
\[
\lim_{t\to 0^+} %
\| u (t) - u_0\|_{L^1(K)} = 0,
\]
however we also need to assume $\limsup_{t\to 0} \|u(t)\|_{L^2(K)}<\I$.

On the other hand, the continuity assumption at $t=0$ can be replaced by $u\in L^\I(0,T;L^2_\uloc)$. This would imply $\bar u \in L^\I(0,T;L^1_\uloc)$ and, therefore, mollifying only in space, we get $w_\e$ is bounded and $w_\e(x,0)=0$ immediately.  

\end{remark}

\section{Applications}\label{sec.applications}

In this section we give applications of Theorems \ref{thrm.pressureNSE} and  \ref{thrm.pressureImpliesMild} to highlight their usefulness.   

\subsection{An improved uniqueness criteria}  

This application highlights the fact that, due to Theorem \ref{thrm.pressureImpliesMild}, we can use properties of mild solutions to study local energy solutions.
 
In \cite{BT8}, the authors establish a local uniqueness criteria \cite[Theorem 1.7]{BT8}  which involves several assumptions on the initial data, namely   $u_0\in E^2 = \overline{C_c^\I}^{L^2_\uloc}$ or 
\[
\lim_{R\to \I} \sup_{x_0\in \R^3} \frac 1 {R^2} \int_{B_R(x_0)}|u_0(x)|^2\,dx = 0.
\]
The assumptions on the initial data imply the solution in view is mild (as is checked in \cite{BT8}), which is used to prove uniqueness.  However, Theorem \ref{thrm.pressureImpliesMild} states that every local energy solution is mild, and therefore these assumptions are overkill.  This leads to the following improvement of \cite[Theorem 1.7]{BT8}.

\begin{theorem}\label{thrm.uniqueness1}
Assume $u_0\in L^{2}_\uloc$ and is divergence free.   Let $u$ and $v$ be local energy solutions with initial data $u_0$.  There exist universal constants $\e>0$ and $c_1>0$ such that,
 if for some $R>0$,
\[
\sup_{0<r\leq R} \sup_{x_0\in \R^3}\frac 1 r \int_{B_r(x_0)} |u_0|^2\,dx \leq \e,
\]
then $u=v$ as distributions  on $\R^3\times (0,T)$, $T=c_1R^2$.
\end{theorem}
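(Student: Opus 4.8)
The plan is to derive Theorem~\ref{thrm.uniqueness1} from \cite[Theorem 1.7]{BT8} by feeding in Theorem~\ref{thrm.pressureImpliesMild}. In \cite{BT8}, the hypotheses $u_0\in E^2$ or the vanishing mean‑oscillation condition on $u_0$ are invoked for one purpose only: to guarantee that the competing local energy solutions satisfy the mild formulation \eqref{condition.mild}. Beyond that, the uniqueness argument there does not refer to the initial data. Since Theorem~\ref{thrm.pressureImpliesMild} shows that \emph{every} local energy solution is mild, both $u$ and $v$ satisfy \eqref{condition.mild} with the common data $u_0$, and the argument of \cite{BT8} applies with essentially no change. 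I will indicate the main steps.

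First I would set $w=u-v$. Subtracting the two mild formulations \eqref{condition.mild}, the linear term $e^{t\Delta}u_0$ cancels, and using $u\otimes u-v\otimes v=u\otimes w+w\otimes v$ one obtains
\[
w(t)=-\int_0^t e^{(t-s)\Delta}\mathbb P\nabla\cdot(u\otimes w+w\otimes v)(s)\,ds,
\]
valid in $L^1_\uloc(0,T)$; in particular no pressure‑difference term is left to treat, since $\nabla(p_u-p_v)$ is precisely what the projection $\mathbb P$ encodes and $\nabla p_u=\nabla p_v$ need not hold. Next I would record that, being local energy solutions, $u$ and $v$ both obey the a priori bound \eqref{ineq.apriorilocal}; together with the hypothesis $\sup_{0<r\le R}\sup_{x_0\in\R^3}\frac1r\int_{B_r(x_0)}|u_0|^2\,dx\le\e$, this makes the scale‑$R$ local energy norms of $u$ and $v$ small on $(0,c_0R^2)$ for a universal $c_0$. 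Then I would estimate $w$ in the local energy space used in \cite{BT8}: around each unit ball, split the Duhamel integral into a near‑field and a far‑field piece, exactly as in Sections~\ref{sec.mild.formulation}--\ref{sec.proof.main.2}, applying the maximal‑regularity bound \eqref{MaTe.estimate} to the near field and the pointwise Oseen estimate \eqref{ineq.oseen} to the far field. This yields a bilinear inequality whose prefactor is controlled by $T$ and by the (small) local energy norms of $u$ and $v$; taking $\e$ small and then $T=c_1R^2$ with $c_1\le c_0$ small makes the prefactor strictly less than one, forcing $w\equiv 0$ on $\R^3\times(0,T)$.

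I do not anticipate a real obstacle here: the single new ingredient is the implication ``local energy solution $\Rightarrow$ mild'' supplied by Theorem~\ref{thrm.pressureImpliesMild}, and what remains is the bookkeeping of reproducing the estimates of \cite{BT8}, which are structurally identical to the near/far decompositions already carried out in this paper. The one point warranting a careful line is that the difference equation for $w$ must be read off from the mild formulation \eqref{condition.mild} rather than from the local energy inequality, so that the $L^1_\uloc(0,T)$ setting of Lemmas~\ref{lemma.mild} and~\ref{lemma.convergenceToIntegralForm} is available when estimating the bilinear term; once this is in place, matching the constants to produce the interval $T=c_1R^2$ is routine.
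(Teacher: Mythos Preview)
Your proposal is correct and follows exactly the paper's approach: the paper's entire proof reads ``To prove Theorem \ref{thrm.uniqueness1}, it suffices to inspect \cite[Proof of Theorem 1.7]{BT8},'' the point being that the extra hypotheses in \cite{BT8} were used only to force mildness, which Theorem \ref{thrm.pressureImpliesMild} now supplies for free. Your sketch of the \cite{BT8} argument (mild difference equation for $w$, near/far Oseen splitting, bilinear smallness) is an accurate expansion of what ``inspect'' entails.
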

 
To prove Theorem \ref{thrm.uniqueness1}, it suffices to inspect \cite[Proof of Theorem 1.7]{BT8}.

\subsection{A regularity criteria in dynamically restricted local Morrey spaces}

This application highlights the fact that, due to Theorem \ref{thrm.pressureNSE}, we can use properties of local energy solutions to study mild solutions.  To illustrate this, we give a new proof of a recent result of  Gruji\'c and Xu \cite{GX}.  In \cite{GX},  Gruji\'c and Xu establish a regularity criteria for smooth mild solutions which are possibly blowing up at time $T_0$ and belong to $L^\I(\R^3 \times  [0,T)  )$ for all $T<T_0$.  The regularity criteria is novel in that it only involves Morrey type quantities computed at scales above a time-dependent threshold. This is similar to a result in \cite{BG} that used Littlewood-Paley modes  to determine similar scales in a Besov space context.  A remarkable aspect of the argument in \cite{GX} is that it only depends on Gruji\'c's geometric regularity criteria from  \cite{Gr12}, which is self-contained in that it does not rely on previous regularity results for the Navier-Stokes equations as does ours.

The following theorem is essentially an analogue of \cite[Theorem 4.1]{GX}.

\begin{theorem}\label{thrm.reg}
Assume $e^{t\Delta}u_0,u\in L^2_\uloc(0,T_0)\cap  C_{wk}((0,T_0);L^\I)$, $T_0>0$, $u$ and $u_0$ are divergence free, and $u$ is a mild solution in the sense of Lemma \ref{lemma.mild} on $\R^3\times (0,T_0)$ 
that satisfies the restriction property: for any $t_0\in (0,T_0)$, $u(t)|_{t\in (t_0,T_0)}$ is a mild solution with initial data $u(t_0)$.\footnote{
These assumptions are satisfied if $u\in L^\I([0,T_0);L^\I)$ is a strong, mild solution as in \cite{GX}.  
}
For $0<t<T_0$, let $r(t)=\sqrt{T_0-t} /(2\sqrt{c_0})$ where $c_0$ is the constant appearing in \eqref{ineq.apriorilocal}.  There exists a small universal constant $\e>0$ so that if there exists
$t \in (0,T_0)$ so that
\[
\sup_{x_0\in \R^3} \frac 1 {r(t)} \int_{B_{r(t)}(x_0)} |u(x,t)|^2\,dx <\e,
\]
then $u$ can be extended to a bounded solution past time $T_0$.
\end{theorem}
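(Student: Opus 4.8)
Write $t_*$ for a time at which the hypothesis of Theorem~\ref{thrm.reg} holds, and set $r_*=r(t_*)=\sqrt{T_0-t_*}/(2\sqrt{c_0})$, $\rho_0=2r_*=\sqrt{(T_0-t_*)/c_0}$ and $\rho_1=\sqrt{T_0-t_*}=\sqrt{c_0}\,\rho_0$; since $c_0<1$ we have $\rho_1<\rho_0$. The plan is to turn the one--scale smallness at time $t_*$ into smallness of the Caffarelli--Kohn--Nirenberg scaled quantities on a parabolic cylinder whose closed top is $(x_0,T_0)$, run $\varepsilon$-regularity to deduce that $u$ is bounded up to $T_0$, and then continue past $T_0$ by local well-posedness in $L^\infty$. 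The point of Theorem~\ref{thrm.pressureNSE} is to make the local energy solution toolkit --- the a priori bound \eqref{ineq.apriorilocal} and the local pressure expansion --- available for $u$. First I would localize in time: by the restriction property, $v:=u(\cdot,\cdot+t_*)$ is a mild solution on $\R^3\times(0,T_0-t_*)$ with data $u(t_*)$, and $u\in C_{wk}((0,T_0);L^\infty)$ is bounded on compact time--subintervals (uniform boundedness principle), so $v$ is smooth and bounded on $\R^3\times[0,T']$ for every $T'<T_0-t_*$; hence $v$ is suitable, obeys the local energy inequality, and --- using the assumed $L^2_\uloc$ control and the strong $L^2_\loc$ continuity at $t=0$ that boundedness forces --- lies in $\mathcal N(u(t_*))$. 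Since $u(t_*)\in L^\infty\subset L^2_\uloc$ and $e^{s\Delta}u(t_*)\in L^1_\uloc$, Theorem~\ref{thrm.pressureNSE} then gives that $v$ is a local energy solution with a pressure $\bar p$ obeying the LPE, so \eqref{ineq.apriorilocal} applies to $v$.

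Next I would run the a priori bound at the enlarged scale $\rho_0$. Covering $B_{\rho_0}(x_0)$ by boundedly many balls of radius $r_*$ gives $N^0_{\rho_0}=\rho_0^{-1}\sup_{x_0}\int_{B_{\rho_0}(x_0)}|u(t_*)|^2<C_0\varepsilon$, which is $<1$ for $\varepsilon$ small, so $\sigma(\rho_0)=c_0$ and \eqref{ineq.apriorilocal} yields, for every $x_0\in\R^3$,
\[
\esssup_{t_*\le s<t_*+c_0\rho_0^2}\int_{B_{\rho_0}(x_0)}|u(s)|^2+\int_{t_*}^{t_*+c_0\rho_0^2}\int_{B_{\rho_0}(x_0)}|\nabla u|^2<C\,A_0(\rho_0)<CC_0\,\rho_0\,\varepsilon .
\]
As $c_0\rho_0^2=T_0-t_*$, this controls $u$ on $\R^3\times[t_*,T_0)$ at spatial scale $\rho_0$. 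Since $\rho_1<\rho_0$ and $T_0-\rho_1^2=t_*$, the cylinder $Q_{\rho_1}(x_0,T_0)=B_{\rho_1}(x_0)\times(t_*,T_0)$ sits inside the controlled region, and dividing the display by $\rho_1$ gives the scaled energy and enstrophy bounds $\rho_1^{-1}\esssup_s\int_{B_{\rho_1}(x_0)}|u(s)|^2\lec\varepsilon/\sqrt{c_0}$ and $\rho_1^{-1}\iint_{Q_{\rho_1}(x_0,T_0)}|\nabla u|^2\lec\varepsilon/\sqrt{c_0}$, uniformly in $x_0$. The Sobolev--Ladyzhenskaya interpolation inequality then gives $\rho_1^{-2}\iint_{Q_{\rho_1}(x_0,T_0)}|u|^3\lec\varepsilon^{3/2}$, again uniformly in $x_0$; if one prefers a pressure--dependent formulation of $\varepsilon$-regularity, the LPE for $\bar p$ supplies the pressure estimate, its additive constant being harmless because $\iint c_{x_0,R}(t)\,(u\cdot\nb\phi)=-\iint c_{x_0,R}(t)\,(\nb\cdot u)\,\phi=0$.

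Finally I would apply $\varepsilon$-regularity at the final time. Taking $\varepsilon$ small enough that the scaled quantities above fall below the threshold of the Caffarelli--Kohn--Nirenberg criterion \cite{CKN} --- in the form valid on a cylinder touching a possibly singular time from below --- one obtains that $u$ is bounded on $B_{\rho_1/2}(x_0)\times(T_0-\rho_1^2/4,T_0)$ with bound $\lec\rho_1^{-1}$ uniformly in $x_0$, hence $u\in L^\infty(\R^3\times(T_0-\rho_1^2/4,T_0))$. Choosing $s_0\in(T_0-\rho_1^2/4,T_0)$ with $T_0-s_0$ below the scale--invariant local existence time for $L^\infty$ data of size $\lec\rho_1^{-1}$, local well-posedness for $L^\infty$ data (e.g.\ \cite{GIM,LR}) yields a mild solution on $\R^3\times[s_0,s_0+\tau)$ with $s_0+\tau>T_0$; by uniqueness and the restriction property it coincides with $u$ on $[s_0,T_0)$, giving the sought extension of $u$ to a bounded solution past $T_0$.

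The delicate step is this last passage through $\varepsilon$-regularity \emph{at} the endpoint $t=T_0$. The a priori bound \eqref{ineq.apriorilocal} only controls $v$ up to time $t_*+c_0\rho_0^2$, so one must arrange that the cylinder used in $\varepsilon$-regularity has its closed top exactly at $T_0$ and still lies within the controlled region; this is what forces the scale $\rho_1=\sqrt{T_0-t_*}$ (for which $T_0-\rho_1^2=t_*$, the left endpoint of the slab, rather than a time below it) instead of $\rho_0$, and it requires all the scaled estimates to be uniform in $x_0$. A more routine but necessary preliminary is the verification that $v\in\mathcal N(u(t_*))$ --- suitability and the local energy inequality for the bounded, smooth solution near $t_*$, together with strong $L^2_\loc$ convergence to $u(t_*)$ --- which must hold before Theorem~\ref{thrm.pressureNSE} and \eqref{ineq.apriorilocal} can be invoked.
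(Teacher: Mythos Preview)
Your argument is correct, but it differs from the paper's in how the passage through $T_0$ is achieved. You enlarge the scale to $\rho_0=2r_*$ so that the a priori slab $[t_*,t_*+c_0\rho_0^2)$ reaches exactly $T_0$, then fit a CKN cylinder $Q_{\rho_1}(x_0,T_0)$ with top \emph{at} $T_0$, obtaining a (non-small) bound $|u|\lec \rho_1^{-1}$ on $(T_0-\rho_1^2/4,T_0)$; the re-solve is then trivial because you can start arbitrarily close to $T_0$. The paper instead keeps the scale $\rho=r_*$ of the hypothesis, so the a priori slab is $(t_*,t_*+c_0\rho^2)=(T_0-4c_0\rho^2,T_0-3c_0\rho^2)$, strictly before $T_0$; it applies (via \cite[Lemma~2.1]{BT8}) the CKN criterion on that interior cylinder to obtain the quantitatively small bound $\|u\|_{L^\infty}\le \rho^{-1}\sigma(\e)$ with $\sigma(\e)\to 0$, and then re-solves from $T_0-3c_0\rho^2$ for a time $\sim\rho^2/\sigma(\e)^2$, which exceeds $3c_0\rho^2$ once $\e$ is small. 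Your route is slightly more elementary in that it only needs the qualitative CKN conclusion ($|u|\le C/r$), not the $\sigma(\e)$ refinement; the price is the endpoint verification you flag --- that the local energy structure and the scaled bounds persist on the open interval $[t_*,T_0)$ --- which indeed follows by letting $\delta\to 0$ in the paper's statement that $\tilde u$ is a local energy solution on each $[t_*,T_0-\delta]$, since the constants in \eqref{ineq.apriorilocal} depend only on the data $u(t_*)$. The paper's route avoids this endpoint issue entirely by staying in the interior, at the cost of invoking the sharper form of $\e$-regularity and a re-solve over a gap of fixed length $3c_0\rho^2$.
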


We now connect this to \cite[Theorem 4.1]{GX} by explaining how $r(t)$ relates to the $L^\I$ norms of $u$ and the vorticity $\om$. For the velocity, if $T_0$ is a singular time, then $\|u(t)\|_\I \geq c_1/ (T_0-t)^{1/2}$ for a small constant $c_1$, implying  $r(t) \geq c\|u(t)\|_{\I}^{-1}$. This leads to a velocity-level analogue of  \cite[Theorem 4.1]{GX}.  

The scales in  \cite[Theorem 4.1]{GX} are, however, pegged to the $L^\I$ norm of $\om$.  Theorem \ref{thrm.reg} can easily be extended to give this by additionally assuming $\om_0\in L^2\cap L^\I$. If $u$ blows up at time $T_0$, then so does $\om$. On the other hand, since $u$ is smooth prior to $T_0$, $T_0$ is the first possible blow up time for $\om$.  If $\om$ blows up at time $T_0$ and the other assumptions in   \cite[Theorem 4.1]{GX} are met, then $r(t)\geq c_2 \|\om(t)\|_{L^\I}^{-1/2}$ for a small constant $c_2$ or else we could re-solve the vorticity system using a local well posedness result  from \cite{BFG} to obtain boundedness of the vorticity past time $T_0$. The local well-posedness result from \cite{BFG} uses $\om_0\in L^2\cap L^\I$.  The range of scales over which smallness is needed at certain times $t$ in \cite{GX} is $[ c_*\|\om(t)\|_{L^\I}^{-1/2}  , 1]$ for a constant $c_*$ appearing in the argument from \cite{GX}. Up to a possible revision of the constant $c_*$, the scale $r(t)$ appearing in Theorem \ref{thrm.reg} is in the range from \cite{GX}.

Theorem \ref{thrm.reg} improves \cite[Theorem 4.1]{GX} slightly in that  we do not need to restrict the times considered to be related to escape times (times $t$ so that $\|u(s)\|_\I \geq \|u(t)\|_\I$ for  $t\leq s<T_0$) and we do not need smallness to hold across a range of scales, it is sufficient at a single scale.  On the other hand, \cite[Theorems 4.2 and 4.3]{GX} illustrate that the approach in \cite{GX} can be extended to other Morrey-type spaces (ours is confined to $L^2$-based norms) as well as the more general $Z_\al$ framework of \cite{BFG} (see also \cite{GX2} for an interesting extension of the ideas in \cite{BFG}).

\begin{proof} 
Let $u$ be mild with data $u_0$ and satisfy $u, e^{t\Delta}u_0\in  L^2_\uloc(0,T_0)\cap   C_{wk}((0,T_0);L^\I)$.   We also assume that $u$ satisfies the {restriction property}.

For $t_0\in (0,T_0)$, let $\td u = u|_{t\in (t_0,T_0)}$. Then, $\td u$ is a local energy solution with initial data $u(t_0)$ as we now check.  
{Note that $\td u\in C_{wk}([t_0,T_0);L^\I)$, %
which is a uniqueness class for mild solutions, see  \cite{GIM} and also \cite{Kukavica,MaTe}.} Therefore $\td u$ agrees with the strong solutions constructed in \cite{GIM,Kukavica,KOT,Xu} for initial data $u(t_0)$ {on $\R^3 \times (t_0,T_1)$, for some $T_1 \in (t_0, T_0]$ with $T_1 -t_0 \ge C(\norm{u} _{L^\infty(t_0, \frac 12(t_0+T_0);\, L^\infty)})>0$
 if $T_1 \le \frac 12(t_0+T_0)$.}  This implies $(t-t_0)^{1/2} \nb \td u \in L^\I(t_0,{T_1(t_0)};L^\I)$ 
 (see \cite{KOT}) and, since this is true for all $t_0$, it follows that $\nb u\in L^\I_\loc((0,T_0);L^\I)$. Therefore, $u\in L^\I(t_0,T_0-{\de}; L^2_\uloc)$ and $\nb u\in  L^2_\uloc(t_0,T_0-{\de})$ {for any $0<\de\ll 1$}. The LPE on $(t_0,T_0)$ is satisfied because $\td u$ is mild by the restriction property, gradient estimates  and Theorem \ref{thrm.pressureNSE}. This also ensures that $u$ and the pressure satisfy \eqref{eq.NSE} in $\mathcal D'$.  By weak time continuity we have for any $w\in L^2$ with compact support that the function 
\[
t\mapsto \int u(x,t)\cdot w(x) \,dx ,
\]
is continuous. The local energy inequality follows because $\td u\in L^4_\loc( \R^3\times (0,T_0))$.   {Smoothness of strong solutions implies that $\int_{K}|u(x,t)- u(x,t_0)|^2\,dx\to 0$   as $t\to t^+$ whenever $K$ is compact. }  This all implies $\td u$ is a local energy solution with data $u(t_0)$ on $\R^3\times [t_0,T_0 {-\delta}]$ for all $0<\delta\ll 1$.

Using the above, for $t\in (0,T)$, we consider $u$ on $(t,T {-\delta})$ as a local energy solution evolving from $u(t)$ for every $0<\de\ll 1$. Hence, if $\e_0$ is the universal constant from the Caffarelli-Kohn-Nirenberg type theorem in \cite{L98}, $\e<\e_0$ and for some $\rho>0$
\[
\sup_{x_0\in \R^3}\frac 1 {\rho} \int_{B_\rho(x_0)} |u(x,t)|^2\,dx <\e,
\]
then
we can use \cite[Lemma 2.1]{BT8} (see also \cite{JiaSverak-minimal} and \cite[Lemma 3.5]{KMT}  when $u_0\in E^2$) to obtain 
\[
\sup_{x_0\in \R^3}{\frac 1{\rho^2}} \int_t^{t+{c_0}\rho^2} \int_{B_\rho(x_0)} |u|^3\,dx\,ds+\sup_{x_0\in \R^3}\frac 1{\rho^2} \int_t^{t+{c_0}\rho^2} \int_{B_\rho(x_0)} |p-c_{x_0,\rho}(s)|^{3/2}\,dx\,ds< C\e^{3/2},
\]
provided $(t,t+{c_0}\rho^2)\subset (0,T_0)$.
This implies, by the Caffarelli-Kohn-Nirenberg criteria type theorem in \cite{L98}, that 
\[
\| u\|_{L^\I (\R^3\times (t+3c_0\rho^2/4,t+c_0\rho^2))} <{\frac 1{\rho}} \si(\e),
\] 
where $\si(\e)$ is a function of $\e$ satisfying $\si(\e)\to 0$ as $\e\to 0$.
Let $t_\rho = T_0-4  c_0\rho^2$, so that $r(t_\rho) = \rho$.  Applying the above observations implies, if there exists $0<\rho< \sqrt{T_0/( {4c_0})}$ so that
\[
\sup_{x_0\in \R^3} \frac 1 \rho \int_{B_\rho(x_0)} |u(x,t_\rho)|^2\,dx <\e,
\]
then 
\[
\|u \|_{L^\I ( \R^3 \times [T_0-13 c_0\rho^2/4,T_0-3 c_0\rho^2])}< {\frac 1{\rho}} \si(\e).
\]
We can re-solve starting at $T_0-3{c_0} \rho^2$ to obtain a new, strong solution on a time interval of length equal to $C{\rho^2}/\si(\e)^2$ by the local existence theory for small mild solutions in $L^\infty$; see \cite[Theorem 1.1 and its Remark (iii)]{GIM} as well as \cite[Proposition 3.2]{Kukavica}. By the uniqueness of mild solutions in $C_{wk}(I;L^\infty)$ where $I$ is a time interval,  the strong solution agrees with $u$ at times when both are defined and the time interval of existence of the strong solution extends up to at least time $T_0-3 {c_0}\rho^2 +C{\rho^2}/\si(\e)^2$. This is bigger than $T_0$ provided {$\si(\e)<\sqrt {C/(3{c_0})}$}. Letting $t$   in the theorem equal $t_\rho$ we have $\rho =r(t)$ and the proof is complete.
\end{proof}

\section*{Acknowledgments}
The research of Bradshaw was partially supported by the Simons Foundation.
The research of Tsai was partially supported by NSERC grant RGPIN-2018-04137.

 Zachary Bradshaw, Department of Mathematics, University of Arkansas, Fayetteville, AR 72701, USA;
 e-mail: zb002@uark.edu
 \medskip
 
 Tai-Peng Tsai, Department of Mathematics, University of British
 Columbia, Vancouver, BC V6T 1Z2, Canada;
 e-mail: ttsai@math.ubc.ca

\end{document}